\documentclass[11pt,reqno]{amsart}
\usepackage[utf8]{inputenc}
\usepackage{amsmath,amssymb,amsthm,amsfonts}
\usepackage{mathrsfs}
\usepackage{amsfonts}
\usepackage{a4wide}

\allowdisplaybreaks \numberwithin{equation}{section}

\usepackage{fancyhdr} 
\allowdisplaybreaks
\numberwithin{equation}{section}

\pagestyle{fancy}

\fancyhf{}

\fancyhead[C]{Existence and Properties of Positive Solutions of Elliptic Equations} 



\renewcommand{\headrulewidth}{0pt} 

\fancyhead[RO]{\thepage}

\fancyhead[LE]{\thepage}


\usepackage[numbers,sort&compress]{natbib}
\usepackage{color}
\usepackage[colorlinks=true]{hyperref}
\def\alert#1{\textcolor[rgb]{1,0,0}{#1}}
\def\org#1{\textcolor[rgb]{0,1,0}{#1}}
\def\green#1{\textcolor[rgb]{0,1,0}{#1}} 
\def\red{\color{red}}  
\usepackage{hyperref}
\hypersetup{colorlinks=true,
}
\newtheorem{theorem}{Theorem}[section]

\newtheorem{lemma}[theorem]{Lemma}

\newcommand{\nz}{\rm nz}
\newcommand{\C}{{\mathbb C}}
\theoremstyle{definition}

\newtheorem{remark}[theorem]{Remark}

\newcommand{\ep}{\varepsilon}
\newcommand{\Om}{\Omega}
\newcommand{\R}{\mathbb{R}}
\newcommand{\ds}{\displaystyle}

\renewcommand{\baselinestretch}{1}
\newcommand{\Ga}{\Gamma}
\newcommand{\ga}{\gamma}
\newcommand{\ro}{\rho}
\newcommand{\ri}{\Bigarrow}
\renewcommand{\ni}{\noindent}
\newcommand{\pa}{\partial}
\newcommand{\om}{\omega}
\newcommand{\sik}{\sum_{i=1}^k}
\newcommand{\vov}{\Vert\omega\Vert}
\newcommand{\Umy}{U_{\mu_i,y^i}}
\newcommand{\lamns}{\lambda_n^{^{\scriptstyle\sigma}}}
\newcommand{\chiomn}{\chi_{_{\Omega_n}}}
\newcommand{\ullim}{\underline{\lim}}
\newcommand{\bsy}{\boldsymbol}
\newcommand{\mvb}{\mathversion{bold}}
\newcommand{\la}{\lambda}
\newcommand{\La}{\Lambda}
\newcommand{\va}{\varepsilon}
\newcommand{\be}{\beta}
\newcommand{\al}{\alpha}
\newcommand{\vs}{\vspace}
\newcommand{\h}{\hspace}
\newcommand{\na }{\nabla }
\newcommand{\var}{\varphi}
\newcommand{\p}{p(\al,\be)}
\newcommand{\myfontsize}{\fontsize{8pt}{11pt}\selectfont}
\newcommand{\myfontsizes}{\fontsize{8pt}{8pt}\selectfont}
\usepackage{listings}
\usepackage{caption}

\begin{document}
	
	\title
	{Positive solutions of critical Hardy-H\'{e}non equations with  logarithmic term}
	\maketitle
	\begin{center}
		\author{Qihan He}
		\footnote{\myfontsize Email addresses:heqihan277@gxu.edu.cn;College of Mathematics and Information Science, \&
			Center for Applied Mathematics of Guangxi (Guangxi University), Guangxi University, Nanning, Guangxi,  530003, P. R. China
		},
		\author{Wenxuan Liu}
		\footnote{\myfontsize  Email addresses:1923167227@qq.com;College of Mathematics and Information Science, Guangxi University, Nanning,  Guangxi, 530003, P. R. China}
		and
		\author{Yiqing Pan}
		\footnote{\myfontsize  Email addresses:13718049940@163.com;(1).College of Mathematics and Information Science, Guangxi University, Nanning,  Guangxi, 530003, P. R. China;(2).College of Science, Beibu Gulf University, Qinzhou,  Guangxi, 535011, P. R. China}
		
	\end{center}
	
	\begin{abstract} We consider the  existence, non-existence and multiplicity of positive solutions to the following  critical Hardy-H\'enon equation with   logarithmic term
		\begin{equation*}\label{eq11}\left\{
		\begin{array}{ll}
		-\Delta u =|x|^{\alpha}|u|^{2^*_{\alpha}-2}\cdot u+\mu u\log u^2+\lambda u,~~&x\in \Omega,\\
		u=0,~~&x\in \partial \Omega,\\
		\end{array}
		\right.\end{equation*}
		where $ \Omega=B$ for $\alpha\geq 0$,  $ \Omega=B\setminus\{0\}$ for $\alpha\in(-2,0)$,  $B\subset\R^N$ is an unit ball, $\lambda,~\mu \in \R$,~$N\geq 3,~\alpha>-2$, $2^*_{\alpha}:=\frac{2(N+\alpha)}{N-2}$ is the  critical exponent for the embedding $H_{0,r}^{1}( \Omega)\hookrightarrow L^p( \Omega;|x|^\alpha)$, and which can be seen as a Br\'{e}zis-Nirenberg problem. When $N \geq 4$ and $\mu>0$, we will show that the above problem has a positive Mountain pass solution, which is also a ground state solution. At the same time, when $\mu<0$,  under some assumptions on the $N$, $\mu$, $\lambda$ and $\alpha$, we will show that the above problem has at least a positive least energy solution and at least  a positive Mountain pass solution, respectively.  What's more, when certain inequality related to $N \geq 3$, $\mu<0 $ and $\alpha\in(-2,0]$ holds, we will demonstrate the non-existence of  positive solutions to the above-mentioned problem. The presence of logarithmic term brings some new and interesting phenomena to this problem.
	\end{abstract}
	\textbf{Keywords:}  Critical problem, Positive solution, Logarithmic term, Hardy-H\'{e}non equation.
	\section{Introduction and main results}
	In the paper, we study   the existence,  non-existence and multiplicity  of positive solutions for the following equation
	\begin{equation}\label{eq11}\left\{
	\begin{array}{ll}
	-\Delta u =|x|^{\alpha}|u|^{2^*_{\alpha}-2}\cdot u+\mu u\log u^2+\lambda u,~~&x\in  \Omega,\\
	u=0,~~&x\in \partial  \Omega,\\
	\end{array}
	\right.\end{equation}
	where $ \Omega=B$ for $\alpha\geq 0$,  $ \Omega=B\setminus\{0\}$ for $\alpha\in(-2,0)$, $B\subset\R^N$ is an unit ball, $\lambda, ~\mu \in \R$, $N\geq 3,~\alpha>-2$, and  $ 2^*_{\alpha}:=\frac{2(N+\alpha)}{N-2}$ is the  critical exponent for the embedding $
	H_{0,r}^{1}( \Omega)\hookrightarrow L^p( \Omega;|x|^\alpha).$
	
	Our motivation for studying \eqref{eq11} comes from  that it originates from  some variational problems in geometry and physics(see \cite{Yamabe,Aubin,bre1,tau1,tau2,uhl,BahriA}),  where lack of compactness  occurs.
	The most notorious example is Yamabe's problem: find a function $u$ satisfying
	\begin{equation*}
	\begin{cases}
	-4\frac{N-1}{N-2}\Delta u=R'{\left|u\right|}^{{2}^{\ast }-2}u-R(x)u,& \text{ }x\in{M },\\
	\quad \;\:\, u>0,&\text{ } x\in{M },
	\end{cases}
	\end{equation*}
	where $R'$ is some constant, $M$ is an $N$-dimensional Riemannian manifold, $\Delta$ denotes the Laplacian and $R(x)$ represents the scalar curvature.
	
	When $\alpha=\lambda=\mu=0$, the equation \eqref{eq11} turns into the following equation:
	\begin{equation}\label{715}
	\begin{cases}
	-\Delta u={\left|u\right|}^{{2}^{\ast }-2}u,& \text{ }x\in{\Omega },\\
	\quad \;\:\, u=0,&\text{ } x\in{\partial \Omega }.
	\end{cases}
	\end{equation}
	As we know, the solvability of this problem \eqref{715} depends heavily on the geometry and topology of the domain $\Omega$.  Poho\v{z}aev \cite{po} showed the first result to problem \eqref{715}: If the bounded domain $\Omega$ is star-shaped, then problem \eqref{715} has no non-trivial solution. Some other results about \eqref{715} can be seen in \cite{Ba,co,ka,pa,pa1} and the references therein. In \cite{bre2}, Br\'{e}zis and Nirenberg alleged that a lower-order term, such as $\lambda u$, also can reverse this circumstance. Br\'{e}zis and Nirenberg considered the following classical problem
	\begin{equation}\label{fc1.3}
	\begin{cases}
	-\Delta u={\left|u\right|}^{{2}^{\ast }-2}u+\lambda u,& \text{ }x\in{\Omega },\\
	\quad \;\:\, u=0,&\text{ } x\in{\partial \Omega },
	\end{cases}
	\end{equation}
	and asserted that the existence of a solution depends heavily on the parameter  $\lambda$ and the dimension $N$. They  showed  that:
	$(i)$ when $N \ge 4$ and  $\lambda\in \left(0,\lambda_{1}(\Omega)\right)$, there exists a positive solution for  \eqref{fc1.3};
	$(ii)$ when $N=3$ and $\Omega$ is a ball, problem  \eqref{fc1.3} has a positive solution if and only if  $\lambda\in \left(\frac{1}{4}\lambda_{1}(\Omega),\lambda_{1}(\Omega)\right)$;
	$(iii)$~problem  \eqref{fc1.3} has no solution when $\lambda<0$ and $\Omega$ is star-shaped, where $\lambda_1(\Omega)$ denotes the first eigenvalue of $-\Delta $ with zero Dirichlet boundary value.
	In \cite{Wangc},
	Wang and Su  considered the following equation,
	\begin{equation}\label{eq17}
	\left\{%
	\begin{array}{ll}
	-\Delta u=|x|^\alpha u^{2^*_{\alpha}-1},~~&\hbox{in} ~\R^N,\\
	u\in D_r^{1,2}(\R^N),u>0~&\hbox{in}~\R^N,
	\end{array}%
	\right.\end{equation}
	where $\alpha>0$.
	It follows from \cite{lieb,gida,glad} that for any $\alpha>-2$, equation \eqref{eq17} has a unique radial solution given by
	\begin{equation}
	u_{\varepsilon,\alpha}(x)=\frac{C_{\alpha,N}\varepsilon^{\frac{N-2}{2}}}{(\varepsilon^{2+\alpha}+|x|^{2+\alpha})^{\frac{N-2}{2+\alpha}}},
	\end{equation}
	where $\varepsilon>0$ and $C_{\alpha,N}={[(N+\alpha)(N-2)]}^\frac{N-2}{2(2+\alpha)}$.
	The functions $u_{\varepsilon,\alpha}(x)$ are the extremal functions for the following inequality:
	\begin{equation}\label{1.9}
	\int_{\R^N}|\nabla u|^2\geq S_\alpha\Big(\int_{\R^N}|x|^\alpha|u|^{2^*_{\alpha}}\Big)^{\frac{2}{2^*_{\alpha}}},u\in D_r^{1,2}(\R^N).
	\end{equation}
	As we know, the presence of the term $|x|^\alpha$ in equation \eqref{eq17} drastically changes the problem. For this kind of nonlinearities, it is not possible to apply the moving plane method anymore to get the radial symmetry around some point, and indeed non-radial solutions appear. This phenomenon has brought attention to the H\'{e}non problem, i.e. \eqref{eq17} or
	the H\'{e}non equation (see \cite{hen}) with Dirichlet boundary conditions:
	\begin{equation}\label{eq1.2}
	\left\{%
	\begin{array}{ll}
	-\Delta u=|x|^\alpha u^{p-1},~~&\hbox{in} ~\Omega,\\
	u>0, ~&\hbox{in}~\Omega,\\
	u=0,~&\hbox{on}~\partial \Omega.
	\end{array}%
	\right.\end{equation}
	In \cite{smets}, Smets et al. are interested in the symmetry of ground states solutions of \eqref{eq1.2}. It is easy to verify that
	\begin{equation}\label{eq1.3}
	S_{\alpha,p}(\Omega):=\inf\limits_{u\in H_0^1(\Omega)\backslash{\{0\}}}\frac{\int_\Omega|\nabla u|^2}{(\int_\Omega|x|^\alpha |u|^p)^\frac{2}{p}}
	\end{equation}
	is achieved at least by a positive function, where $\alpha \ge 0$, and $p$ satisfies:\:$(i)$ $N=2,~ 2<p<2^*=+\infty$;\:$(ii)$  $N\ge3, ~2<p<2^*=\frac{2N}{N-2}$.
	Moreover, since \eqref{eq1.2} and \eqref{eq1.3} are invariant under rotations of $\Omega$, they consider naturally
	\begin{equation}\label{eq1.4}
	S^R_{\alpha,p}(\Omega):=\inf\limits_{u\in H_{0,rad}^1(\Omega)\backslash{\{0\}}}\frac{\int_\Omega|\nabla u|^2}{(\int_\Omega|x|^\alpha |u|^p)^\frac{2}{p}},
	\end{equation}
	where $H_{0,rad}^1(\Omega)$ denotes the space of radial functions in $H_0^1(\Omega)$. It is also easy to verify that $S^R_{\alpha,p}(\Omega)$ is achieved by a positive function $v$. Under the variational methods to the H\'{e}non equation \eqref{eq1.2}, they showed that  there was $\alpha^*>0$
	such that for $\alpha>\alpha^*$ and any(see \cite{pi}) $2 < p < 2^*:=\frac{2N}{N-2}$, the ground state solution of \eqref{eq1.2} was non-radial.
	In \cite{ni}, Ni showed that \eqref{eq1.2} had a radial solution for $2 < p < 2^*_{\alpha}$.
	
	In \cite{ph}, Phan and Souplet considered the following problem:
	\begin{equation}\label{717}
	-\Delta u=|x|^\alpha u^{p-1}~~x\in\Omega,
	\end{equation}
	where $\alpha\in\R$, $p>1$ and $\Omega$ is a domain of $\R^N$ with $N\geq 2$. They are concerned in particular with the Liouville property, i.e., the non-existence of positive solutions in the whole space $\R^N$. If $\alpha\leq -2$, then \eqref{717} has no positive solution  in any domain $\Omega$ containing the origin(see \cite{gida,Bi,Du}). In \cite{Bi1,gida}, the authors proved that $(i)$ $N\geq 2$, $\alpha>-2$ and $1<p<2^*_\alpha$, then \eqref{717} has no positive radial solution in $\Omega=\R^N$; $(ii)$ $N\geq 2$, $\alpha>-2$ and $p\geq2^*_\alpha$, then \eqref{717} possesses bounded, positive radial solution in $\Omega=\R^N$. Phan et al. proved that if $N\geq 2$, $\alpha>-2$ and $p\geq2^*_\alpha$, there exists a bounded sequence of real numbers
	$b_k>0$ and a sequence of solutions $u_k$ of \eqref{717} with $u(x)=\varphi(x)$ in $\partial \Omega$, such that $u_k(0)\rightarrow\infty$ as $k\rightarrow\infty$, where $\Omega=B$ and $b_k\equiv\varphi_k$. In \cite{gh}, Ghoussoub and Yuan proved that when $\Omega$ is a star-shaped domain in $\R^N$ and $p=2^*_\alpha$, \eqref{717} with $u(x)=0$ in $\partial \Omega$ has no non-trivial solution and in \cite{ni}, Ni proved that \eqref{717} with $u(x)=0$ in $\partial \Omega$  has no solution for $p\geq 2^*_\alpha$.
	In \cite{kds}, Kang and Peng considered the following singular problem:
	\begin{equation}\label{eq7101}
	\left\{%
	\begin{array}{ll}
	-\Delta u-\mu\frac{u}{|x|^2}={|x|^\alpha}{|u|^{2^*_{\alpha}-2}}u+\lambda|u|^{q-2}u,~~ &x\in\Omega,\\
	u=0,~~&x\in\partial\Omega,
	\end{array}%
	\right.\end{equation}
	where $-2< \alpha\leq 0$, $0\leq \mu<\bar{\mu}:=(\frac{N-2}{2})^2$, and $q\in(2,2^*)$. They pointed out that the problem \eqref{eq7101} has been studied by some authors (see \cite{jane,cdm,ferr,garc}), many interesting results have been obtained when $\alpha=0$. Especially, in \cite{jane}, Jannelli  proved the existence of positive solutions for problem \eqref{eq7101} as $\alpha=0$ and $q=2$. In \cite{kds}, Kang and Peng  are interested in  whether the above results remain true for \eqref{eq7101} as $-2<\alpha<0$, with the critical Sobolev-Hardy growth? They discussed the existence of positive solutions for the singular problem \eqref{eq7101} via variational methods. They have shown that there is a positive solution in $H_0^1(\Omega)$ for problem \eqref{eq7101} under the conditions of $-2< \alpha\leq0$, $0\leq\mu<\bar{\mu}$, $\lambda>0$ and some suitable assumption of $q$. Under the hypotheses of $-2<\alpha\leq0$, $q=2$, and $\beta=2(\sqrt{\bar{\mu}}+\sqrt{\bar{\mu}-\mu})$, they asserted that the problem\eqref{eq7101} has a positive solution in $H_0^1(\Omega)$, provided that one of the following conditions hold:
	
	$(i)$ $0<\lambda<\lambda_1(\mu)$ and $0\leq \mu\leq \bar{\mu}-1$;
	
	$(ii)$  $\bar{\mu}-1<\mu<\bar{\mu}$ and $\lambda_*(\mu)<\lambda<\lambda_1(\mu)$, where
	\begin{equation*}
	\lambda_*(\mu)=\min\limits_{\varphi\in H_0^1(\Omega)\setminus\{0\}}\frac{\int_\Omega(|\nabla\varphi|^2/|x|^\beta)}{\int_\Omega(\varphi^2/|x|^\beta)}.
	\end{equation*}
	So, when $\mu=0$ and $q=2$, the equation \eqref{eq7101} turns into the following equation:
	\begin{equation}
	\left\{%
	\begin{array}{ll}
	-\Delta u={|x|^\alpha}{|u|^{2^*_{\alpha}-2}}u+\lambda u,~~ &x\in\Omega,\\
	u=0,~~&x\in\partial\Omega,
	\end{array}%
	\right.\end{equation}
	and then we can obtain that equation \eqref{eq7101} has a positive solution.
	%
	
	When $\alpha=0$, \eqref{eq11} can be written as the following problem  with critical exponent and logarithmic perturbation
	\begin{equation}\label{Dend1.1}
	\begin{cases}
	-\Delta u={\left|u\right|}^{{2}^{\ast }-2}u+\lambda u+\theta u\log {u}^{2}, & \text{ }x\in{\Omega },\\
	\quad \;\:\,  u=0,&\text{ } x\in{\partial \Omega },
	\end{cases}
	\end{equation}
	where the logarithmic term  $u\log {u}^{2}$ brings us two interesting  facts: one is that
	compared with $|u|^{2^*-2}u$, $u\log u^2$ is also  a lower-order term at infinity, and the other one is that $u\log {u}^{2}$ is sub-linear growth near $0$ and sign-changing in $(0, +\infty)$.
	In \cite{Deng}, Deng et al. investigated the existence and non-existence of positive solutions to \eqref{Dend1.1} and  proved that problem \eqref{Dend1.1} admits a positive ground state solution, which is also a Mountain pass type solution when $N \ge4, ~\lambda\in\R$ and $\theta>0$. However,
	that the Nehari manifold is a natural manifold can not be proved. So they  can not find a positive ground state solution to \eqref{Dend1.1} for the case of $\theta<0$. The readers can refer to \cite{Deng} for more details. Later,  Zou and his collaborators gave an important method to find a positive ground state solution for some problems, whose Nehari manifold is not a natural manifold. Zou and his collaborators \cite{ha,ha1} considered the following problem
	\begin{equation}\label{Dend1.2}
	\begin{cases}
	-\Delta u=\mu{\left|u\right|}^{{2}^{\ast }-2}u+\lambda u+\theta u\log {u}^{2}, & \text{ }x\in{\Omega },\\
	\quad \;\:\,  u=0,&\text{ } x\in{\partial \Omega },
	\end{cases}
	\end{equation}
	which is more general than \eqref{Dend1.1} and showed the following results:
	$(i)$ If  $N \geq 4$  and  $(\lambda, \mu, \theta) \in M_{1} \cup M_{2}$, then problem \eqref{Dend1.2} has a positive local minimum solution  $\bar{u}$  and a positive ground state solution  $\tilde{u}$  such that  $J(\bar{u})=\tilde{C}_{\rho}<0$  and  $J(\tilde{u})=\tilde{C}_{\kappa}<0$; $(ii)$ If  $N \geq 4$, $(\lambda, \mu, \theta) \in M_{1} \cup M_{2}$  and  $|\theta| e^{\frac{N}{2}-\frac{\lambda}{\theta}-1}<\rho^{2}$, then  $\tilde{C}_{\rho}=\tilde{C}_{\kappa}$, which implies that the solution  $\bar{u}$  is also a positive ground state solution, where
	\begin{align}
	\begin{array}{cccccc}
	M_{1}:=\{(\lambda, \mu, \theta)|\lambda \in[0, \lambda_{1}(\Omega)), \mu>0, \theta<0, \frac{\mu}{N}(\frac{\lambda_{1}(\Omega)-\lambda}{\mu \lambda_{1}(\Omega)})^{\frac{N}{2}} S^{\frac{N}{2}}+\frac{\theta}{2}|\Omega| >0\}, \\
	M_{2}:=\{(\lambda, \mu, \theta)|\lambda \in \R, \mu>0, \theta<0, \frac{1}{N} \frac{1}{\mu^{\frac{N-2}{2}}} S^{\frac{N}{2}}+\frac{\theta}{2} e^{-\frac{\lambda}{\theta}}|\Omega| >0\}, \\
	J(u)=\frac{1}{2} \int_{\Omega}|\nabla u|^{2}-\frac{\mu}{2^{*}} \int|u_{+}|^{2^{*}}-\frac{\theta}{2} \int(u_{+})^{2}(\log (u_{+})^{2}+\frac{\lambda}{\theta}-1), \\
	\tilde{C}_{\kappa}:=\inf \limits_{u \in\{v \in H_{0}^{1}(\Omega): J^{\prime}(v)=0\}} J(u), \\
	\tilde{C}_{\rho}:=\inf\limits_{|\nabla u|_{2}<\rho} J(u).
	\end{array}
	\end{align}
	and  $\rho$  is a suitable constant. Recently, He and Pan \cite{heq} showed that the local minimum solution  $\bar{u}$  and the  positive ground state solution  $\tilde{u}$, found in \cite{ha,ha1}, have the same functional energy value, i.e., $\tilde{C}_{\rho}=\tilde{C}_{\kappa}$, without the restrictions that  $|\theta| e^{\frac{N}{2}-\frac{\lambda}{\theta}-1}<\rho^{2}$. At the same time, they proved that if $(\lambda, \mu, \theta) \in M_{1} \cup M_{2}$ and $N=3, 4, 5$, then \eqref{Dend1.2} has another positive solutions $u_2$, which is in fact a Mountain pass solution.

	Inspired by the above results, particularly \cite{Deng, ha, ha1, heq},  we want to study the existence, non-existence and multiplicity of positive solutions to \eqref{eq11}.
	
	To find positive solutions to equation \eqref{eq11}, we define a energy  functional:
	\begin{equation}
	I(u)=\frac{1}{2}\int_\Omega|\nabla u|^2-\frac{1}{2^*_{\alpha}}\int_\Omega|x|^\alpha|u_+|^{2^*_{\alpha}}-\frac{\lambda}{2}\int_\Omega u_+^2-\frac{\mu}{2}\int_\Omega u_+^2(\log u_+^2-1),u\in H_{0,r}^1(\Omega),
	\end{equation}
	which can also be written as
	\begin{equation}
	I(u)=\frac{1}{2}\int_\Omega|\nabla u|^2-\frac{1}{2^*_{\alpha}}\int_\Omega|x|^\alpha{|u_+|}^{2^*_{\alpha}}-\frac{\mu}{2}\int_\Omega u_+^2(\log u_+^2+\frac{\lambda}{\mu}-1),u\in H_{0,r}^1(\Omega),
	\end{equation}
	where ${u_+}=\max\{u,0\}$ and ${u_-}=-\max\{-u,0\}$. It is easy to see that $I$ is well-defined in $H_{0,r}^1(\Omega)$, and any non-negative critical point of $I$ corresponds to a solution of equation \eqref{eq11}.
	
	Before stating our results, we  introduce  some notations here.  Unless otherwise stated, we let  $\int$ denote $\int_\Omega$, and let $\lambda_1(\Omega)$ and $S_\alpha$ be the first eigenvalue of $-\Delta$ with zero Dirichlet boundary value, and the best constant of the embedding $H_{0,r}^{1}(\Omega)\hookrightarrow L^{2^*_\alpha}(|x|^{\alpha};\Omega)$, respectively, i.e.,
	\begin{equation}
	\lambda_1(\Omega):=\inf\limits_{u\in H_{0,r}^1(\Omega)\backslash{\{0\}}}\frac{\int_\Omega|\nabla u|^2}{\int_\Omega|u|^2}
	\end{equation}
	and
	\begin{equation}\label{710}
	S_\alpha=\inf\limits_{u\in D_r^{1,2}(\R^N)}\frac{\int_{\R^N}|\nabla u|^2} {(\int_{\R^N}|x|^\alpha|u|^{2^*_{\alpha}})^{\frac{2}{2^*_{\alpha}}}}.
	\end{equation}
	We let
	\begin{equation}\label{7103}
	u_{\varepsilon,\alpha}(x)=\frac{C_{\alpha,N}\varepsilon^{\frac{N-2}{2}}}{(\varepsilon^{2+\alpha}+|x|^{2+\alpha})^{\frac{N-2}{2+\alpha}}},
	\end{equation}
	where  $\varepsilon>0$ and $C_{\alpha,N}={[(N+\alpha)(N-2)]}^\frac{N-2}{2(2+\alpha)}$.
	It follows from \cite{kds,Wangc} that,  up to  dilations, $u_{\varepsilon,\alpha}$ are the extremal functions for $S_{\alpha}$ and positive solutions to \eqref{eq17} with $\alpha>-2$.
	
	Setting
	\begin{equation*}
	\|v\|=(\int_\Omega|\nabla v|^2)^\frac{1}{2},~v\in H_{0,r}^1(\Omega),\quad \mathcal{N}:=\{u\in H_{0,r}^1(\Omega)\backslash\{0\}|g(u)=0\},
	\end{equation*}
	and
	\begin{equation}
	c_g:=\inf\limits_{u\in\mathcal{N}}I(u),\quad c_M:=\inf\limits_{\gamma\in\Gamma}\max\limits_{t\in[0,1]}I(\gamma(t)),
	\end{equation}
	where
	\begin{equation*}
	g(u)=\int|\nabla u|^2-\int|x|^\alpha|u_+|^{2^*_{\alpha}}-\lambda\int u_+^2-\mu\int u_+^2\log u_+^2,
	\end{equation*}
	\begin{equation}
	\Gamma:=\{\gamma\in C([0,1],H_{0,r}^1(\Omega))|\gamma(0)=\bar{u},I(\gamma(1))<I(\bar{u})\},
	\end{equation}
	and $\bar{u}$ will be given in Theorem \ref{Th61}.
	
	Let
	\begin{equation}
	\begin{split}
	&A_0:=\{(\lambda,\mu)|\lambda\in\R,~\mu>0\} ,\\
	&B_0:=\Big\{(\lambda,\mu)|\lambda\in[0,\lambda_1(\Omega)),~\mu<0,~\frac{\alpha+2}{2(N+\alpha)}\Big(\frac{\lambda_1(\Omega)-\lambda}
	{\lambda_1(\Omega)}\Big)^{\frac{N+\alpha}{\alpha+2}}S_\alpha^{\frac{N+\alpha}{\alpha+2}}+\frac{\mu}{2}|\Omega|>0\Big\},\\
	&C_ 0:=\Big\{(\lambda,\mu)|\lambda\in\R,~\mu<0,~\frac{\alpha+2}{2(N+\alpha)}S_\alpha^{\frac{N+\alpha}{2+\alpha}}+\frac{\mu}{2}e^{-\frac{\lambda}{\mu}}|\Omega|>0\Big\},
	\end{split}
	\end{equation}
	and
	\begin{equation}\label{cdef1}
	\tilde{C}_{\rho}:=\inf\limits_{|\nabla u|_2<\rho}I(u),~~\quad~~ \tilde{C}_{\kappa}:=\inf\limits_{u\in\kappa}I(u),
	\end{equation}
	where $\rho>0$ will be given in Lemma \ref{lemma22} and
	\begin{equation}\label{kappadef}
	\kappa:=\{u\in H_{0,r}^1(\Omega):I'(u)=0\}.
	\end{equation}
	It is easy to check that $\tilde{C}_{\rho}$ and $\tilde{C}_{\kappa}$ are well-defined.
	
	After introducing the notations, we proceed to present the main results as follows.
	\begin{theorem}\label{Th41}
		If $(\lambda,\mu)\in A_0$ and $N\geq 4$, then \eqref{eq11} has a positive Mountain pass solution, which is also a ground state solution.
	\end{theorem}
	~~Denote $h(s):=|x|^{\alpha}|s|^{2^*_{\alpha}-2}s+\lambda s+\mu s\log s^2$. It is easy to see that $\mathcal{N}\neq\varnothing$ and $c_M\geq c_g$, if problem \eqref{eq11} has a positive Mountain pass solution. On the other hand, when $\lambda\in\R$ and $\mu>0$, $\frac{h(s)}{s}$ is strictly increasing in $(0,+\infty)$ and strictly decreasing in $(-\infty,0)$, which enable one to show that $c_M\leq c_g$(see \cite[Theorem 4.2]{will}). Therefore, the ground state energy $c_g$ equals to the Mountain pass level energy $c_M$, which
	implies that the Mountain pass solution must be a ground state solution. So, in Theorem \ref{Th41}, we only need to
	show that problem \eqref{eq11} has a positive Mountain pass solution.

	\begin{theorem}\label{Th61}
		~Assume that $N \geq 3$ and $(\lambda,\mu)\in B_0\cup C_0$.
		Then \eqref{eq11} has at least  a positive least energy solution $\bar{u}$ such that $|\nabla \bar{u}|_2<\rho$ and  $I(\bar{u})=\tilde{C}_{\rho}=\tilde{C}_{\kappa}<0$,
		where $\rho>0$ will be given in Lemma \ref{lemma22}, and $\tilde{C}_{\rho}$ and $\tilde{C}_{\kappa}$ are defined in \eqref{cdef1}.
	\end{theorem}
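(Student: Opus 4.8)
The plan is to realize $\bar u$ as a constrained minimizer of $I$ on the open ball $\mathcal{B}_\rho:=\{u\in H_{0,r}^1(\Omega):|\nabla u|_2<\rho\}$, where $\rho$ is the radius furnished by Lemma \ref{lemma22}. I would first record the local geometry that lemma provides: $\rho$ is small enough that $\tfrac12|\nabla u|_2^2-\tfrac{1}{2^*_{\alpha}}\int|x|^\alpha u_+^{2^*_{\alpha}}\geq0$ on $\mathcal B_\rho$ (a sub--Sobolev--threshold regime), that $\inf_{|\nabla u|_2=\rho}I>0$ so the sphere acts as a barrier, and that $\tilde C_\rho<0$; the strict negativity is obtained by testing $I$ along $t\mapsto t\varphi$ for a fixed positive radial $\varphi$ and invoking the defining inequalities of $B_0$ and $C_0$, the point being that for $\mu<0$ the term $-\tfrac\mu2\int u_+^2(\log u_+^2-1)$ behaves like $\tfrac{|\mu|}{2}t^2\log t^2\int\varphi^2$ and drives the energy below zero at small amplitude. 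Since $I$ and $g$ depend on $u$ only through $u_+$, any interior minimizer will be a free critical point of $I$, hence a weak solution of \eqref{eq11}.

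The heart of the argument, and the step I expect to be the main obstacle, is the attainment of $\tilde C_\rho$, i.e.\ compactness of a minimizing sequence against the loss of compactness induced by the critical term. I would take $u_n\in\mathcal B_\rho$ with $I(u_n)\to\tilde C_\rho$, note it is bounded, and pass to $u_n\rightharpoonup\bar u$ in $H_{0,r}^1(\Omega)$ with $|\nabla\bar u|_2\leq\rho$. Working in the radial space on a bounded domain, the subcritical and logarithmic parts converge, so $\int u_{n,+}^2\to\int\bar u_+^2$ and $\int u_{n,+}^2\log u_{n,+}^2\to\int\bar u_+^2\log\bar u_+^2$. Setting $v_n:=u_n-\bar u\rightharpoonup0$ and combining the Br\'ezis--Lieb lemma for $\int|x|^\alpha u_{n,+}^{2^*_{\alpha}}$ with $|\nabla u_n|_2^2=|\nabla\bar u|_2^2+|\nabla v_n|_2^2+o(1)$ yields
\[
\tilde C_\rho=I(\bar u)+\lim_{n\to\infty}\Big(\tfrac12|\nabla v_n|_2^2-\tfrac{1}{2^*_{\alpha}}\int|x|^\alpha v_{n,+}^{2^*_{\alpha}}\Big).
\]
If the leftover does not vanish, a concentration argument based on \eqref{1.9} and \eqref{710} shows the limit on the right is at least the critical quantum $\tfrac{\alpha+2}{2(N+\alpha)}S_\alpha^{(N+\alpha)/(2+\alpha)}$, while a pointwise minimization of $-\tfrac\lambda2 s^2-\tfrac\mu2 s^2(\log s^2-1)$ (minimized at $s^2=e^{-\lambda/\mu}$) together with the sub--threshold bracket gives $I(\bar u)\geq\tfrac\mu2 e^{-\lambda/\mu}|\Omega|$ in the $C_0$ regime (and the analogous bound in the $B_0$ regime). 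This would force $\tilde C_\rho\geq\tfrac{\alpha+2}{2(N+\alpha)}S_\alpha^{(N+\alpha)/(2+\alpha)}+\tfrac\mu2 e^{-\lambda/\mu}|\Omega|>0$ by the very definition of $B_0\cup C_0$, contradicting $\tilde C_\rho<0$. Hence the leftover vanishes, $u_n\to\bar u$ strongly, $I(\bar u)=\tilde C_\rho<0$, and the sphere barrier places $\bar u$ in the interior of $\mathcal B_\rho$; thus $I'(\bar u)=0$.

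It remains to prove positivity and the least--energy identity. Testing $I'(\bar u)=0$ against $\bar u_-$ and using that $I$ sees only $u_+$ gives $|\nabla\bar u_-|_2=0$, so $\bar u\geq0$; since $I(\bar u)<0$ forces $\bar u\not\equiv0$, a strong maximum principle argument adapted to the logarithmic nonlinearity yields $\bar u>0$ in $\Omega$. Finally, $\bar u\in\kappa$ gives at once $\tilde C_\kappa\leq I(\bar u)=\tilde C_\rho$; for the reverse inequality I would follow the fibering analysis of \cite{heq,ha,ha1}, examining the shape of $t\mapsto I(tu)$ to show that every nontrivial critical point of negative energy must lie in $\mathcal B_\rho$, whence $\tilde C_\kappa=\inf_\kappa I\geq\tilde C_\rho$. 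Combining the two bounds gives $I(\bar u)=\tilde C_\rho=\tilde C_\kappa<0$, as claimed.
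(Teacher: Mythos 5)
Your compactness step contains a genuine gap. Your sequence $u_n$ is only a minimizing sequence for $\tilde{C}_{\rho}$; you never produce any derivative information for it (there is no Ekeland step in your outline), and yet the dichotomy you invoke --- ``either the remainder vanishes or it is at least $\frac{\alpha+2}{2(N+\alpha)}S_\alpha^{\frac{N+\alpha}{\alpha+2}}$'' --- is a property of Palais--Smale sequences, not of minimizing sequences. The quantum lower bound requires the constraint $\int|\nabla v_n|^2-\int|x|^\alpha (v_n)_+^{2^*_{\alpha}}=o_n(1)$, which in the paper comes precisely from $I'\to0$ (manufactured by Ekeland's variational principle in Step 1 of its proof, and used the same way in Lemma \ref{lemma25}). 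With only the Sobolev inequalities \eqref{1.9}, \eqref{710} at hand there is no such constraint: a small-amplitude bubble $v_n=\varepsilon_0\, u_{\delta_n,\alpha}$ with $\delta_n\to0^+$ and $\varepsilon_0$ small has remainder of order $\varepsilon_0^2$, far below the quantum, so the claimed ``concentration argument'' does not go through as stated.

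The gap is reparable inside your own framework, in either of two ways. (i) Drop the quantum entirely: since $\|\nabla v_n\|_2^2\leq\rho^2+o_n(1)$ and $\rho\leq S_\alpha^{\frac{N+\alpha}{2(\alpha+2)}}$ in both the $B_0$ and $C_0$ cases, the Sobolev inequality gives $\frac12\|\nabla v_n\|_2^2-\frac{1}{2^*_{\alpha}}\int|x|^\alpha (v_n)_+^{2^*_{\alpha}}\geq\frac{\alpha+2}{2(N+\alpha)}\|\nabla v_n\|_2^2+o_n(1)$; and since $I(\bar u)\geq\tilde{C}_{\rho}$ in all cases (interior points by definition of the infimum, boundary points because $I\geq\sigma>0>\tilde{C}_{\rho}$ by Lemma \ref{lemma22}), the identity $\tilde{C}_{\rho}=I(\bar u)+\lim_n\big(\frac12\|\nabla v_n\|_2^2-\frac{1}{2^*_{\alpha}}\int|x|^\alpha (v_n)_+^{2^*_{\alpha}}\big)$ forces $v_n\to0$ strongly, with no pointwise minimization of the logarithmic term and no use of the defining inequalities of $B_0\cup C_0$ beyond Lemma \ref{lemma22}. (ii) Alternatively, insert Ekeland's variational principle, as the paper does, to replace $u_n$ by a $(PS)_{\tilde{C}_{\rho}}$ sequence, after which your quantum argument becomes legitimate. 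I also note that your final step is genuinely different from, and leaner than, the paper's: the paper separately proves attainment of $\tilde{C}_{\kappa}$ by minimizing over $\kappa$ (its Step 2) and applies the fibering analysis only to that minimizer, whereas you apply the fibering argument (Lemmas \ref{lemma22} and \ref{lemma53}) to \emph{every} critical point of negative energy to conclude $\tilde{C}_{\kappa}\geq\tilde{C}_{\rho}$, letting $\bar u$ itself attain both levels; this is correct, provided you record that critical points of $I$ are automatically nonnegative (test $I'(u)$ against $u_-$) so the fibering lemma applies to them, and that critical points with $I\geq0$ trivially satisfy $I\geq0>\tilde{C}_{\rho}$.
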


	\begin{theorem}\label{Th64}
		If $3\leq N<\min\{6,6+2\alpha\}$
		and $(\lambda,\mu)\in B_0\cup C_0$,
		then \eqref{eq11} has a positive Mountain pass solution.
	\end{theorem}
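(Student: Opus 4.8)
The plan is to obtain the Mountain pass solution as a second critical point sitting above the positive least energy solution $\bar u$ produced in Theorem \ref{Th61}, in the spirit of \cite{heq}. First I would verify that $\bar u$ generates the Mountain pass geometry encoded in $\Gamma$. Since $\bar u$ realizes $\tilde{C}_{\rho}=\inf_{|\nabla u|_2<\rho}I(u)$ at an interior point with $|\nabla\bar u|_2<\rho$, it is a local minimizer, and the analysis behind Theorem \ref{Th61} (via Lemma \ref{lemma22}) yields small $r>0$, with $\{|\nabla(u-\bar u)|_2\le r\}\subset\{|\nabla u|_2<\rho\}$, and $\delta>0$ such that $I(u)\ge I(\bar u)+\delta$ whenever $|\nabla(u-\bar u)|_2=r$. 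For a point beyond the barrier I would fix a positive radial $w\in H_{0,r}^1(\Omega)$ and use $2^*_{\alpha}>2$ to get $I(\bar u+tw)\to-\infty$ as $t\to+\infty$; taking $t_0$ large gives $\gamma(1)=\bar u+t_0 w$ with $I(\gamma(1))<I(\bar u)$, so every admissible path must cross the sphere $|\nabla(u-\bar u)|_2=r$, whence $\Gamma\neq\varnothing$ and $c_M\ge I(\bar u)+\delta>I(\bar u)$.

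The second ingredient is a local Palais--Smale condition below the expected threshold. Via the Br\'ezis--Lieb lemma and a concentration analysis, a non-convergent $(PS)_c$ sequence must split off at least one rescaled copy of the extremal bubble $u_{\varepsilon,\alpha}$, each carrying energy no less than the limiting ground state level $\frac{\alpha+2}{2(N+\alpha)}S_\alpha^{\frac{N+\alpha}{\alpha+2}}$. Consequently $(PS)_c$ holds for every $c<I(\bar u)+\frac{\alpha+2}{2(N+\alpha)}S_\alpha^{\frac{N+\alpha}{\alpha+2}}$, so it suffices to certify that $c_M$ lies strictly below this value.

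\textbf{The main obstacle} is precisely this energy estimate, and it is where the dimensional restriction enters. I would test with the paths $t\mapsto\bar u+t\,u_{\varepsilon,\alpha}$ (the bubble suitably truncated to $\Omega$) and expand $I(\bar u+t\,u_{\varepsilon,\alpha})$ for small $\varepsilon$; because $\bar u$ solves \eqref{eq11}, the first-order terms cancel and the decisive gain is the critical cross term $-t^{2^*_{\alpha}-1}\int_\Omega|x|^\alpha u_{\varepsilon,\alpha}^{2^*_{\alpha}-1}\bar u\sim -c\,\varepsilon^{(N-2)/2}$ with $c>0$, which lowers the energy. This must beat the quadratic loss terms, namely $\int u_{\varepsilon,\alpha}^2\sim\varepsilon^2$ (coming from the $\lambda u^2$ and the logarithmic contributions, the latter only up to a harmless $|\log\varepsilon|$ factor) and $\int|x|^\alpha\bar u^{2^*_{\alpha}-2}u_{\varepsilon,\alpha}^2\sim\varepsilon^{\alpha+2}$ (the second variation of the critical term). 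The requirement $(N-2)/2<2$ forces $N<6$, while $(N-2)/2<\alpha+2$ forces $N<6+2\alpha$; together these are exactly $N<\min\{6,6+2\alpha\}$, equivalently the gain $\varepsilon^{(N-2)/2}$ dominates both losses and $2^*_{\alpha}-1>2$. I expect the logarithmic term to be the most delicate piece, since $s^2\log s^2$ is sub-quadratic near $0$ and its interaction with the concentrating bubble produces an $\varepsilon^2|\log\varepsilon|$ correction that must be controlled uniformly in $t$ over the compact maximization range; confirming that $\max_{t\ge0}I(\bar u+t\,u_{\varepsilon,\alpha})<I(\bar u)+\frac{\alpha+2}{2(N+\alpha)}S_\alpha^{\frac{N+\alpha}{\alpha+2}}$ for small $\varepsilon$ is the crux of the argument.

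Once the geometry, the threshold $(PS)$ condition, and the energy estimate are in place, the Mountain pass theorem produces a critical point $u_2$ with $I(u_2)=c_M$. Since $c_M>I(\bar u)$, it is distinct from $\bar u$; testing $I'(u_2)=0$ against the negative part forces $u_2\ge0$, and standard elliptic regularity together with the strong maximum principle (away from the origin when $\alpha<0$) show that $u_2$ is a positive Mountain pass solution of \eqref{eq11}.
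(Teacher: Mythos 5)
Your proposal follows essentially the same route as the paper's: perturb the least energy solution $\bar{u}$ of Theorem \ref{Th61} along truncated bubbles $\bar{u}+tU_{\varepsilon,\alpha}$, exploit the cross term $\int_\Omega|x|^{\alpha}\bar{u}\,U_{\varepsilon,\alpha}^{2^*_{\alpha}-1}\sim\varepsilon^{\frac{N-2}{2}}$ as the decisive gain (this is exactly the paper's Lemma \ref{lemma54}, with $L_1=\inf_{B_{2\rho}(0)}\bar{u}>0$, fed into Lemma \ref{lemma56}), verify $c_M<\tilde{C}_{\kappa}+\frac{\alpha+2}{2(N+\alpha)}S_{\alpha}^{\frac{N+\alpha}{\alpha+2}}$, and conclude via the local $(PS)$ condition (Lemma \ref{lemma25}, whose threshold coincides with yours since $\tilde{C}_{\kappa}=\tilde{C}_{\rho}=I(\bar{u})$ by Theorem \ref{Th61}), the Mountain pass theorem, and elliptic regularity plus the maximum principle. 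Your accounting of the competing loss terms ($\varepsilon^{2}$ up to logarithms from $\int U_{\varepsilon,\alpha}^2$, and $\varepsilon^{\alpha+2}$ from the weighted quadratic term) and the resulting restriction $N<\min\{6,6+2\alpha\}$ match the paper's computation.

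The one step you should repair is the Mountain pass geometry. You assert that, $\bar{u}$ being a local minimizer, there exist $r,\delta>0$ with $I(u)\ge I(\bar{u})+\delta$ on the sphere $\{|\nabla(u-\bar{u})|_2=r\}$. That is not automatic: a local minimum can be degenerate, with the infimum of $I$ on every small sphere around $\bar{u}$ equal to $I(\bar{u})$, and neither Theorem \ref{Th61} nor Lemma \ref{lemma22} proves such an annulus bound. What the paper actually uses is the origin-centered barrier from Lemma \ref{lemma22}: $I\ge\sigma>0$ on $\{\|u\|=\rho\}$, together with $I(\bar{u})=\tilde{C}_{\rho}<0$. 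Since any $v$ with $I(v)<I(\bar{u})=\inf_{\|u\|<\rho}I$ necessarily satisfies $\|v\|\ge\rho$, every path in $\Gamma$ (which starts at $\bar{u}$ with $\|\bar{u}\|<\rho$) must cross $\{\|u\|=\rho\}$, whence $c_M\ge\sigma>0>I(\bar{u})$; this yields both the nondegenerate geometry and the distinctness of the Mountain pass critical point from $\bar{u}$. With that substitution, your argument is the paper's.
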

	
	\begin{remark}
		We can not find a positive Mountain pass solution to \eqref{eq11} for $N\geq 6$ and $(\lambda,\mu)\in B_0\cup C_0$ in Theorem \ref{Th64}, since $\int|x|^\alpha U_{\varepsilon,\alpha}^{2^*_\alpha-1}=O(\varepsilon^{\frac{N-2}{2}})\leq O(\varepsilon^2)=\int U_{\varepsilon,\alpha}^2$ for $N\geq 6$(see the proof of Lemma \ref{lemma56}). Therefore, we have to leave it as an open problem and continue to study it in the future.
	\end{remark}

	\begin{remark}
		In Theorems \ref{Th61} and \ref{Th64}, we require $(\lambda,\mu)\in B_0\cup C_0$, just to ensure that the functional $I(u)$ has a Mountain pass geometry structure. We don't know whether the domains of the parameters are optimal. If one can find some other domains to make the functional $I(u)$ has the Mountain pass geometry structure, then $(\lambda,\mu)\in B_0\cup C_0$ can be replaced by the corresponding domains.
	\end{remark}
	

	\begin{theorem}\label{Th16}
		Assume that $N\geq3$ and $\alpha\in(-2,0]$. If $\mu<0$ and $-\frac{(N-2)\mu}{\alpha+2}+\frac{(N-2)\mu}{\alpha+2}\log\left(-\frac{(N-2)\mu}{\alpha+2}\right)+\lambda-\lambda_1{(\Omega)}\geq0$, then problem \eqref{eq11} has no positive solution.
	\end{theorem}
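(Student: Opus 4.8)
The plan is to argue by contradiction, testing the equation against the first Dirichlet eigenfunction and exploiting two facts: that $\alpha\le 0$ forces $|x|^\alpha\ge 1$ on the unit ball, and that the inequality assumed in the statement is exactly the positivity of the minimum of a one–variable auxiliary function.

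First I would suppose $u>0$ solves \eqref{eq11} and let $\varphi_1\in H^1_{0,r}(\Omega)$ be the first eigenfunction, so that $-\Delta\varphi_1=\lambda_1(\Omega)\varphi_1$ in $\Omega$, $\varphi_1>0$ in $\Omega$, and $\varphi_1=0$ on $\partial\Omega$. Multiplying \eqref{eq11} by $\varphi_1$, integrating, and applying Green's identity (both $u$ and $\varphi_1$ vanish on $\partial\Omega$) I would obtain
\[
\int_\Omega u\,\varphi_1\Big(|x|^\alpha u^{2^*_\alpha-2}+\mu\log u^2+\lambda-\lambda_1(\Omega)\Big)\,dx=0 .
\]
Here one must check that the computation is legitimate: that $u$ is regular enough (elliptic regularity gives $u\in L^\infty_{\mathrm{loc}}$), that $\int u\varphi_1\log u^2$ is finite, and—in the punctured case $\Omega=B\setminus\{0\}$ for $\alpha\in(-2,0)$—that no boundary term is produced at the origin. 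These are routine once one uses $u\in H^1_{0,r}$ and $\alpha>-2$, noting also that $H^1_0(B\setminus\{0\})=H^1_0(B)$ since a point has zero capacity.

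The core is to show the integrand is nonnegative. Since $\alpha\le 0$ and $|x|\le 1$ on $B$ we have $|x|^\alpha\ge1$, hence (using $\log u^2=2\log u$ for $u>0$)
\[
|x|^\alpha u^{2^*_\alpha-2}+\mu\log u^2+\lambda-\lambda_1(\Omega)\ \ge\ \psi(u),\qquad \psi(t):=t^{2^*_\alpha-2}+2\mu\log t+\lambda-\lambda_1(\Omega).
\]
Because $\mu<0$ and $2^*_\alpha-2=\tfrac{2(\alpha+2)}{N-2}>0$, one has $\psi(t)\to+\infty$ as $t\to0^+$ and as $t\to+\infty$, so $\psi$ attains its minimum at the unique critical point $t_0$ with $(2^*_\alpha-2)t_0^{2^*_\alpha-2}=-2\mu$, i.e. $t_0^{2^*_\alpha-2}=-\tfrac{(N-2)\mu}{\alpha+2}$. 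Substituting back (writing $A:=-\tfrac{(N-2)\mu}{\alpha+2}>0$, so $2\mu\log t_0=-A\log A$) I would compute
\[
\min_{t>0}\psi(t)=\psi(t_0)=-\frac{(N-2)\mu}{\alpha+2}+\frac{(N-2)\mu}{\alpha+2}\log\Big(-\frac{(N-2)\mu}{\alpha+2}\Big)+\lambda-\lambda_1(\Omega),
\]
which is precisely the quantity assumed $\ge0$ in the statement. Thus $\psi\ge0$ on $(0,\infty)$, so the integrand is nonnegative everywhere, contradicting that the integral vanishes while $u\varphi_1>0$ in $\Omega$.

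The final step is to upgrade nonnegativity to a strict contradiction, and this is the point I expect to require the most care. For $\alpha\in(-2,0)$ it is immediate: $|x|^\alpha>1$ for $0<|x|<1$, so the integrand strictly exceeds $u\psi(u)\ge0$ a.e. on $\{u>0\}$ and the integral is strictly positive. The delicate case is $\alpha=0$ on the boundary of the parameter region, where $\psi(t_0)=0$ may occur; then the integrand equals $u\psi(u)$, vanishing only on the level set $\{u=t_0\}$. I would show this set is null: on a level set of positive measure one has $\nabla u=0$ and $\Delta u=0$ a.e., whereas the equation forces $-\Delta u=h(t_0)=\lambda_1(\Omega)\,t_0\neq0$ there (using that $\psi(t_0)=0$ gives $t_0^{2^*-2}+2\mu\log t_0+\lambda=\lambda_1(\Omega)$, hence $h(t_0)/t_0=\lambda_1(\Omega)$), a contradiction. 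This borderline measure-zero argument, together with the rigorous justification of the eigenfunction identity on the punctured domain, are the only genuinely technical points; the algebraic identification of $\min_{t>0}\psi$ with the hypothesis is the conceptual heart of the proof.
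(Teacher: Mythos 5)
Your proposal is correct and follows essentially the same route as the paper: test the equation against the first eigenfunction $\varphi_1$, use $|x|^\alpha\ge 1$ (valid since $\alpha\le 0$ and $|x|\le 1$ on $B$), and identify the hypothesis as exactly the nonnegativity of the minimum of $f(s)=s^{2^*_\alpha-2}+\mu\log s^2+\lambda-\lambda_1(\Omega)$, attained at $s_0=\big(-\tfrac{(N-2)\mu}{\alpha+2}\big)^{\frac{N-2}{2(\alpha+2)}}$. The only minor divergence is the final strict-positivity step: where you use $|x|^\alpha>1$ for $\alpha<0$ and a level-set argument ($\Delta u=0$ a.e.\ on $\{u=t_0\}$) for the borderline case $\alpha=0$, the paper argues more simply that if the nonnegative integrand had zero integral then $f(u_0)=0$ a.e., forcing $u_0\equiv s_0$, a nonzero constant, which cannot belong to $H_{0,r}^1(\Omega)$ — both endgames are valid.
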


	Before closing our introduction, let's briefly introduce our idea of the  proofs.

	We will prove Theorem \ref{Th41} by using the Mountain pass Theorem and estimating the Mountain pass level energy.

	For Theorem   \ref{Th61},  we firstly show that $\tilde{C}_{\rho}$ and $\tilde{C}_{\kappa}$ can be attained by $\bar{u}$ with $|\nabla \bar{u}|_2<\rho$ and $\tilde{u}\in \kappa,$ respectively. Then, by the definitions of $\tilde{C}_{\rho}$ and $\tilde{C}_{\kappa}$, we  have that $\bar{u}\in\kappa$ and
	\begin{align}\label{eq117}
	\tilde{C}_{\rho}=I(\bar{u})\geq\tilde{C}_{\kappa}.
	\end{align}
	On the other hand, since $\tilde{C}_{\kappa}$ can be attained by $\tilde{u}\in \kappa,$ there exists at least a positive solution $\tilde{u}$ of \eqref{eq11} such that $I(\tilde{u})=\tilde{C}_{\kappa}<0$. Letting $g(t)=I(t\tilde{u}),~ t\in\R^{+}$, direct computations imply that $g(t)$  has at most two extreme points, and the geometry structure of the energy functional $I$ tells us that $g(t)$  has at least two extreme points. So we obtain that $g(t)$ has only two extreme points $t_1,~t_2\in\R^+$ with $t_1<t_2$. Using the geometry structure of the energy functional $I$ and the facts that $\tilde{u}$ is a positive solution of \eqref{eq11} and $I(\tilde{u})<0$ again, we can see that $t_1=1$ is the local minimum point of $g(t)$ and $\|t\tilde{u}\|<\rho$ for any $t\in(0,1]$, which implies that $\|\tilde{u}\|<\rho$. That is, $\tilde{u}\in A:=\{|\nabla u|_2<\rho\}$. It therefore follows from $\tilde{C}_{\kappa}=I(\tilde{u})\geq\tilde{C}_{\rho}$ and \eqref{eq117} that $\tilde{C}_{\kappa}=\tilde{C}_{\rho}$.
	
	To find a positive Mountain pass solution to \eqref{eq11}, we firstly prove some important inequalities(see  Lemma \ref{lemma54}).~Based on these inequalities, we construct a path $\gamma_0(t):=\bar{u}+tTU_{\varepsilon,\alpha}$ such that $c_M\leq \sup\limits_{t\in[0,1]}I(\gamma_0(t))<\tilde{C}_{\kappa}+\frac{\alpha+2}{2(N+\alpha)}S_{\alpha}^{\frac{N+\alpha}{2+\alpha}}$, where $U_{\varepsilon,\alpha}$ will be defined in \eqref{eq31} and $T$ will be established in Lemma \ref{lemma45}.  After getting $c_M<\tilde{C}_{\kappa}+\frac{\alpha+2}{2(N+\alpha)}S_{\alpha}^{\frac{N+\alpha}{2+\alpha}}$, it is not difficult to show that any $(PS)_{c_M}$ sequence $\{v_n\}$ of $I$ is convergent in $H_{0,r}^1(\Omega)$. Therefore, we get the Theorem \ref{Th64}.

	For the non-existence of positive solutions, we argue by contradiction. Assuming that $u_0$ is a positive solution of \eqref{eq11} and testing \eqref{eq11} by the first eigenfunction $\varphi_1(x)$ of $-\Delta$ with zero Dirichlet condition, then we have that $\int_\Omega(|x|^\alpha u_0^{2_\alpha^*-2}+\lambda-\lambda_1(\Omega)+\mu\log u_0^2)u_0\varphi_1(x)=0.$ On the other hand, based on the assumptions, we show that   $\int_\Omega(|x|^\alpha u_0^{2_\alpha^*-2}+\lambda-\lambda_1(\Omega)+\mu\log u_0^2)u_0\varphi_1(x)\geq \int_\Omega( u_0^{2_\alpha^*-2}+\lambda-\lambda_1(\Omega)+\mu\log u_0^2)u_0\varphi_1(x)>0.$ Therefore, we can get a contradiction and deduce that
	\eqref{eq11} has no positive solutions.

	The paper is organized as follows: In Section 2, we will check the Mountain pass geometry structure for $I(u)$, under different specific situations, and  give some other preliminaries. We are devoted to estimate the Mountain pass level $c_M$ for different parameters
	$\lambda$, $N$, $\mu$ with $\mu>0$ and show the Theorem \ref{Th41} in Section 3. We put the proof of Theorem \ref{Th61} into the Section 4. Some important estimates and the proof of Theorem   \ref{Th64} will given  in Section 5. Section 6  is contributed to the proof of Theorem \ref{Th16}.

	\section{Preliminaries}
	\begin{lemma}\label{lemma21}{\rm{(see \cite{ni,su1,su2})}} Assume that $V$ satisfies $V(r)\in C(0,R],~V(r)\geq0,~V(r)\not\equiv0$ and there exists $\theta\geq-2$ such that
		\begin{equation}
		\limsup\limits_{r\rightarrow0^+}\frac{V(r)}{r^\theta}<\infty.
		\end{equation}
		Then  the embedding $H_{0,r}^1(B_R)\hookrightarrow L^p(B_R;V)$ is continuous for $1\leq p\leq 2^*_{\theta}$, and  the embedding $H_{0,r}^1(B_R)\hookrightarrow L^p(B_R;V)$ is compact for $1\leq p< 2^*_{\theta}$ if  $\theta> -2$.
	\end{lemma}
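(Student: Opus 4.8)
The plan is to reduce everything to one–dimensional weighted integrals in the radial variable and to exploit a Ni–type pointwise bound. First I would record that for radial $u\in H_{0,r}^1(B_R)$, writing $u=u(r)$ and using $u(R)=0$, the fundamental theorem of calculus together with the Cauchy--Schwarz inequality gives, for $0<r\le R$,
\[
|u(r)|=\Big|\int_r^R u'(s)\,ds\Big|\le\Big(\int_r^R|u'(s)|^2s^{N-1}\,ds\Big)^{\frac12}\Big(\int_r^R s^{-(N-1)}\,ds\Big)^{\frac12}\le C\,r^{-\frac{N-2}{2}}\|u\|,
\]
where $\|u\|=(\int_{B_R}|\nabla u|^2)^{1/2}$ and $C=C(N,R)$. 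This single estimate both bounds $u$ away from the origin and quantifies its possible blow-up at the origin. By hypothesis there are $\delta\in(0,R]$ and $C_0>0$ with $V(r)\le C_0 r^\theta$ on $(0,\delta]$, while $V$ is bounded on $[\delta,R]$ since $V\in C(0,R]$; I would split $B_R=B_\delta\cup(B_R\setminus B_\delta)$ and treat the two regions separately.

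For continuity in the subcritical range $1\le p<2^*_\theta$, on the annulus the pointwise bound yields $|u|\le C\delta^{-(N-2)/2}\|u\|=:M$, whence $\int_{B_R\setminus B_\delta}V|u|^p\le(\sup_{[\delta,R]}V)\,|B_R\setminus B_\delta|\,M^p\le C\|u\|^p$; note that the radial boundedness here makes this work for \emph{all} $p\ge1$, even where the flat Sobolev embedding fails. Near the origin, inserting $V(r)\le C_0r^\theta$ and the pointwise bound gives
\[
\int_{B_\delta}V|u|^p\,dx\le C\,\|u\|^p\int_0^\delta r^{\theta-\frac{p(N-2)}{2}+N-1}\,dr,
\]
and the radial exponent exceeds $-1$ exactly when $p<2^*_\theta=\frac{2(N+\theta)}{N-2}$, so the integral is finite and this part is $\le C\|u\|^p$. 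Combining the two regions establishes continuity for subcritical $p$.

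The hard part is the endpoint $p=2^*_\theta$, where the exponent above equals $-1$ and the crude estimate degenerates into a logarithmic divergence. I would instead settle it with the sharp weighted (Hardy--Sobolev) inequality \eqref{1.9}--\eqref{710}: extending $u$ by zero to $D_r^{1,2}(\R^N)$ and using $V\le C_0|x|^\theta$ on $B_\delta$,
\[
\int_{B_\delta}V|u|^{2^*_\theta}\,dx\le C_0\int_{\R^N}|x|^\theta|u|^{2^*_\theta}\le C_0\,S_\theta^{-\frac{2^*_\theta}{2}}\Big(\int_{\R^N}|\nabla u|^2\Big)^{\frac{2^*_\theta}{2}}=C\|u\|^{2^*_\theta},
\]
the annular part being handled by boundedness exactly as before. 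When $\theta=-2$ this step is literally the Hardy inequality, with $2^*_{-2}=2$.

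For compactness when $1\le p<2^*_\theta$ and $\theta>-2$, I would take $\|u_n\|\le C$, pass to a subsequence with $u_n\rightharpoonup u$ in $H_{0,r}^1(B_R)$, and use Rellich to get $u_n\to u$ in $L^2(B_R)$ and a.e. Given $\epsilon>0$, the uniform pointwise bound $|u_n(r)|\le C'r^{-(N-2)/2}$ makes the near-origin tail $\int_{B_\eta}V|u_n|^p\le C\int_0^\eta r^{\theta-\frac{p(N-2)}{2}+N-1}\,dr$ small uniformly in $n$ as $\eta\to0$ by the strict subcriticality $p<2^*_\theta$, and similarly for $u$; on the fixed annulus $\eta\le|x|\le R$ the integrand is dominated by a constant via the same pointwise bounds, so a.e.\ convergence and dominated convergence force $\int_{\eta\le|x|\le R}V|u_n-u|^p\to0$. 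Summing the two contributions gives $u_n\to u$ in $L^p(B_R;V)$. The main obstacle, apart from the critical endpoint, is to identify precisely why the strict decay $\theta>-2$ is needed: it is what guarantees that no mass can concentrate at the origin in the subcritical tail; at $\theta=-2$ the weight is of Hardy type and compactness is genuinely lost at the borderline $p=2^*_{-2}=2$.
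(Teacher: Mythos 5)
The paper offers no proof of Lemma \ref{lemma21} at all: it is quoted from \cite{ni,su1,su2}, so your argument can only be measured against the standard proofs in those references --- and it is in fact a correct, self-contained reconstruction of them. The radial decay estimate $|u(r)|\le C r^{-(N-2)/2}\|u\|$, the splitting of $B_R$ at a small radius $\delta$ where $V(r)\le C_0 r^{\theta}$ (with $V$ bounded on $[\delta,R]$ by continuity), the exponent count $\theta-\frac{p(N-2)}{2}+N-1>-1$ precisely when $p<2^*_{\theta}$, and the tail-uniformity-plus-dominated-convergence scheme for compactness are exactly the mechanism of those papers, and every step checks. One caveat is worth flagging at the endpoint $p=2^*_{\theta}$: there you invoke \eqref{1.9}, i.e.\ the positivity of $S_{\theta}$ in \eqref{710}, which for $\theta>0$ restricted to radial functions is essentially the critical continuity statement being proved, so your argument is circular unless \eqref{1.9} is imported from the literature (as the paper itself does, citing \cite{lieb,gida,glad,Wangc}). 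A self-contained fix costs nothing: interpolate with your own pointwise bound, writing $|u(r)|^{2^*_{\theta}}=|u(r)|^{2^*_{\theta}-2}|u(r)|^{2}\le C\|u\|^{2^*_{\theta}-2}\,r^{-(\theta+2)}|u(r)|^{2}$, since $\frac{N-2}{2}\,(2^*_{\theta}-2)=\theta+2$, so that $V|u|^{2^*_{\theta}}\le C\|u\|^{2^*_{\theta}-2}\,|u|^{2}/r^{2}$ near the origin, and conclude with Hardy's inequality $\int_{B_R}u^{2}/|x|^{2}\le \frac{4}{(N-2)^{2}}\int_{B_R}|\nabla u|^{2}$; this covers every $\theta\ge-2$ uniformly, including $\theta=-2$ where it is trivial. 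A last small correction: your closing remark that $\theta>-2$ is what rules out concentration is slightly off --- your tail estimate only uses the strict inequality $p<2^*_{\theta}$, and the same computation would even yield compactness for $\theta=-2$ and $p<2=2^*_{-2}$; the hypothesis $\theta>-2$ in the lemma is simply the range in which the cited compactness is stated, not the point where your argument breaks.
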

	\begin{lemma}\label{lemma22} Assume that $N\geq3$ and $(\lambda,\mu)\in A_0\cup B_0\cup C_0$. Then the functional $I(u)$ satisfies the Mountain pass geometry structure:\\
		(i)~there exist $\sigma,~\rho>0$ such that $I(v)\geq\sigma$ for all $\|v\|=\rho$;\\
		(ii)~there exists $\omega\in H_{0,r}^1(\Omega)$ such that $\|\omega\|\geq\rho$ and $I(\omega)<0$.
	\end{lemma}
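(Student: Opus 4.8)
The plan is to rewrite $I$ in the reduced form
\[
I(v)=\tfrac12\|v\|^2-\tfrac{1}{2^*_\alpha}\int|x|^\alpha v_+^{2^*_\alpha}+\int W(v_+),\qquad W(s):=\tfrac{\mu-\lambda}{2}s^2-\tfrac{\mu}{2}s^2\log s^2,
\]
so that the critical term is controlled once and for all by the Sobolev inequality $\int|x|^\alpha v_+^{2^*_\alpha}\le S_\alpha^{-2^*_\alpha/2}\|v\|^{2^*_\alpha}$, and the whole difficulty is concentrated in the one--variable potential $W$, which I would analyse by elementary calculus. The geometry in part (ii) is insensitive to the sign of $\mu$, so I would dispose of it first: fixing any $0\le\phi\in H^1_{0,r}(\Omega)$ with $\int|x|^\alpha\phi^{2^*_\alpha}>0$, the leading term of $I(t\phi)$ as $t\to\infty$ is $-t^{2^*_\alpha}(2^*_\alpha)^{-1}\int|x|^\alpha\phi^{2^*_\alpha}$, and since $2^*_\alpha>2$ this dominates both the quadratic term and the logarithmic contribution $-\tfrac{\mu}{2}t^2\log t^2\int\phi^2$ (which is only of order $t^2\log t^2$); hence $I(t\phi)\to-\infty$, and $\omega:=t\phi$ with $t$ large gives $\|\omega\|\ge\rho$ and $I(\omega)<0$, uniformly over all three parameter regions.

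The core is part (i), which splits according to the sign of $\mu$. For $(\lambda,\mu)\in A_0$, i.e. $\mu>0$, the decisive feature is that $W(s)=\tfrac{\mu}{2}s^2\bigl(1-\tfrac{\lambda}{\mu}-\log s^2\bigr)\ge 0$ for $0<s\le s_0:=e^{(1-\lambda/\mu)/2}$, so the logarithmic term is actually helpful near the origin and absorbs the linear term there regardless of the size of $\lambda$. For larger $s$ I would invoke the one--sided elementary inequality $s^2\log s^2\le \tfrac{2}{\varepsilon}s^{2+\varepsilon}$ for $s\ge1$ to obtain a global bound $W(s)\ge -C_\varepsilon s^{2+\varepsilon}$ with $\varepsilon>0$ chosen so that $2+\varepsilon<2^*$. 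Combined with the continuous embedding $H^1_{0,r}(\Omega)\hookrightarrow L^{2+\varepsilon}(\Omega)$ from Lemma \ref{lemma21} (taking $\theta=0$), this gives
\[
I(v)\ge \tfrac12\|v\|^2-C\|v\|^{2^*_\alpha}-C_\varepsilon\|v\|^{2+\varepsilon};
\]
since both subtracted powers exceed $2$, a small $\rho$ and $\sigma=\tfrac14\rho^2$ finish this case. The subtlety I would flag here is that the one--sided bound is essential: the cruder two--sided estimate $|s^2\log s^2|\le C_\varepsilon(s^{2-\varepsilon}+s^{2+\varepsilon})$ produces a term of order $\|v\|^{2-\varepsilon}$ that destroys the positivity near the origin.

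For $(\lambda,\mu)\in B_0\cup C_0$, i.e. $\mu<0$, the picture is genuinely different: expanding $I(t\phi)$ for small $t$ shows the logarithmic term makes $I$ negative near the origin, so $\rho$ cannot be small and the radius must be dictated by the structural constraints. Now $W$ is bounded below. In the $C_0$ case I would compute $\min_{s\ge0}W(s)=\tfrac{\mu}{2}e^{-\lambda/\mu}$ (attained at $s^2=e^{-\lambda/\mu}$), whence $\int W(v_+)\ge\tfrac{\mu}{2}e^{-\lambda/\mu}|\Omega|$ and
\[
I(v)\ge f(\|v\|)+\tfrac{\mu}{2}e^{-\lambda/\mu}|\Omega|,\qquad f(\rho):=\tfrac12\rho^2-\tfrac{1}{2^*_\alpha}S_\alpha^{-2^*_\alpha/2}\rho^{2^*_\alpha}.
\]
A direct computation gives $\max_{\rho>0}f(\rho)=\tfrac{\alpha+2}{2(N+\alpha)}S_\alpha^{\frac{N+\alpha}{2+\alpha}}$, attained at $\rho^*=S_\alpha^{\frac{N+\alpha}{2(\alpha+2)}}$; taking $\rho=\rho^*$ makes the lower bound equal to the left--hand side of the defining inequality of $C_0$, which is positive by hypothesis, so the required $\sigma>0$ exists. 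In the $B_0$ case ($\lambda\in[0,\lambda_1(\Omega))$) I would first absorb the linear term by Poincaré, $\int v_+^2\le\lambda_1(\Omega)^{-1}\|v\|^2$, replacing $\tfrac12\|v\|^2$ by $\tfrac12\tfrac{\lambda_1(\Omega)-\lambda}{\lambda_1(\Omega)}\|v\|^2$, and then use the cleaner bound $-\tfrac{\mu}{2}s^2(\log s^2-1)\ge\tfrac{\mu}{2}$ for the remaining logarithmic potential; the analogue of $f$ then carries the coefficient $\tfrac{\lambda_1(\Omega)-\lambda}{\lambda_1(\Omega)}$, and its maximum is exactly the first term in the definition of $B_0$. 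Thus in both subcases the inequalities defining $B_0$ and $C_0$ assert precisely that the boundary level stays positive at the optimal radius. The main obstacle of the lemma is recognizing this matching between the maximum of the reduced one--dimensional energy and the constraint quantities; once it is identified, each case collapses to elementary one--variable calculus.
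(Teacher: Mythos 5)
Your proposal is correct and follows essentially the same route as the paper: for $\mu>0$ a one-sided bound on the logarithmic term (the paper uses $s^2\log s^2\le Cs^{2^*}$ on the set where the integrand is positive, you use $s^2\log s^2\le \tfrac{2}{\varepsilon}s^{2+\varepsilon}$, an immaterial difference) yields $I(v)\ge\tfrac12\|v\|^2-C_1\|v\|^{2^*_\alpha}-C_2\|v\|^{q}$ with both exponents exceeding $2$, while for $\mu<0$ the paper likewise bounds the logarithmic potential below by its pointwise minimum ($\tfrac{\mu}{2}$ in the $B_0$ case after Poincar\'e, $\tfrac{\mu}{2}e^{-\lambda/\mu}$ in the $C_0$ case), applies the Sobolev inequality, and takes $\rho$ to be the maximizer of the reduced one-variable function so that $\sigma$ equals exactly the quantity whose positivity defines $B_0$, respectively $C_0$. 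Your identification of the matching between the maximum of the reduced energy and the constraint sets, and the remark that the two-sided estimate $|s^2\log s^2|\le C(s^{2-\delta}+s^{2+\delta})$ would fail here, are precisely the content of the paper's Cases I--III.
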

	\begin{proof}
		We   divide the proof  into three cases.
		
		\textbf{Case I :}
		$(\lambda,\mu)\in A_0$
		
		Since $\mu>0$, by  $s^2\log s^2\leq Cs^{2^*}$ for all $s\in [1,+\infty)$, we can get that
		\begin{align*}
		\mu\int u_+^2(\log u_+^2+\frac{\lambda}{\mu}-1)
		&\leq \mu\int u_+^2\log(e^\frac{\lambda}{\mu}u_+^2)\\
		&\leq \mu\int_{\{e^\frac{\lambda}{\mu}u_+^2\geq1\}}u_+^2\log(e^\frac{\lambda}{\mu}u_+^2)\\
		&
		\leq C\mu\int_{\{e^\frac{\lambda}{\mu}u_+^2\geq1\}}e^\frac{(2^*-2)\lambda}{2\mu}|u_+|^{2^*}\\
		&\leq Ce^\frac{(2^*-2)\lambda}{2\mu}\mu\int|u_+|^{2^*}\\
		&\leq Ce^\frac{(2^*-2)\lambda}{2\mu}\mu||u||^{2^*}.
		\end{align*}
		So
		\begin{equation}
		I(u)\geq\frac{1}{2}\|u\|^2-C_1\|u\|^{2^*_{\alpha}}-C_2\|u\|^{2^*},
		\end{equation}
		where $C_1,~C_2$ are two positive constants and which implies that there exist $\sigma>0$ and $\rho>0$ such that $I(v)\geq\sigma>0$ for all $\|v\|=\rho$.
		
		Let $0\leq\varphi\in H_{0,r}^1(\Omega)\backslash\{0\}$ be a fixed function, then
		\begin{align*}
		I(t\varphi)&=\frac{t^2}{2}\int|\nabla \varphi|^2-\frac{t^{2^*_{\alpha}}}{2^*_{\alpha}}\int|x|^{\alpha}|\varphi|^{2^*_{\alpha}}-\frac{\mu}{2}t^2\int\varphi^2(\log(t^2\varphi^2)+\frac{\lambda}{\mu}-1)\\
		&=\frac{t^2}{2}\int|\nabla \varphi|^2-\frac{t^{2^*_{\alpha}}}{2^*_{\alpha}}\int|x|^{\alpha}|\varphi|^{2^*_{\alpha}}-\frac{\mu}{2}t^2\log t^2\int\varphi^2-\frac{\mu}{2}t^2\int\varphi^2(\log\varphi^2+\frac{\lambda}{\mu}-1)\\
		&\to -\infty,~~\hbox{as}~t\rightarrow+\infty.
		\end{align*}
		Therefore, we can choose $t_0\in\R^+$ large enough such that
		\begin{align}
		I(t_0\varphi) <0\quad\hbox{and}\quad\|t_0\varphi\|>\rho.
		\end{align}
		So there is a function $\omega\in H_{0,r}^1(\Omega)$ such that $||\omega||\geq\rho$ and $I(\omega)<0$.

		\textbf{Case II:}~~ $(\lambda,\mu)\in B_0$

		For
		$\mu<0$,  direct computations imply that
		\begin{equation}\label{eq62}
		-\frac{\mu}{2}\int u_{+}^{2}(\log u_{+}^{2}-1)\geq \frac{\mu}{2}|\Omega|.
		\end{equation}
		Therefore,
		\begin{equation}\label{eq72}
		\begin{split}
		I(u)&=\frac{1}{2}\int|\nabla u|^2-\frac{1}{2^*_{\alpha}}\int|x|^\alpha|u_+|^{2^*_{\alpha}}-\frac{\lambda}{2}\int|u_+|^2-\frac{\mu}{2}\int u_+^2(\log u_+^2-1)\\
		&\geq\frac{1}{2}\int|\nabla u|^2\Big(1-\lambda\frac{\int |u_+|^2}{\int|\nabla u|^2}\Big)-\frac{1}{2^*_{\alpha}}\frac{\int|x|^\alpha|u_+|^{2^*_{\alpha}}}{(\int|\nabla u|^2)^{\frac{2^*_{\alpha}}{2}}}\cdot\Big(\int|\nabla u|^2\Big)^{\frac{2^*_{\alpha}}{2}}+\frac{\mu}{2}|\Omega|\\
		&\geq\frac{1}{2}\int|\nabla u|^2\Big(1-\frac{\lambda}{\lambda_1(\Omega)}\Big)-\frac{1}{2^*_{\alpha}}\Big[\frac{(\int|x|^\alpha|u_+|^{2^*_{\alpha}})^{\frac{2}{2^*_{\alpha}}}}{\int|\nabla u|^2}\Big]^{\frac{2^*_{\alpha}}{2}}\cdot\Big(\int|\nabla u|^2\Big)^{\frac{2^*_{\alpha}}{2}}+\frac{\mu}{2}|\Omega|\\
		&\geq\frac{1}{2}\frac{\lambda_1{(\Omega)}-\lambda}{\lambda_1(\Omega)}\int|\nabla u|^2-\frac{1}{2^*_{\alpha}}S_\alpha^{-\frac{2^*_{\alpha}}{2}}\Big(\int|\nabla u|^2\Big)^{\frac{2^*_{\alpha}}{2}}+\frac{\mu}{2}|\Omega|.
		\end{split}
		\end{equation}
		We let $\sigma=\frac{\alpha+2}{2(N+\alpha)}(\frac{\lambda_1{(\Omega)}-\lambda}{\lambda_1(\Omega)})^\frac{N+\alpha}{\alpha+2}
		S_\alpha^\frac{N+\alpha}{\alpha+2}+\frac{\mu}{2}|\Omega|$ and $\rho=(\frac{\lambda_1{(\Omega)}-\lambda}{\lambda_1(\Omega)})^\frac{N-2}{2(\alpha+2)}S_\alpha^\frac{N+\alpha}{2(\alpha+2)}$. Then it follows from $(\lambda,\mu)\in B_0$ that $\sigma>0$ and $\rho>0$. Following from  \eqref{eq72},
		\begin{align*}
		I(v)&\geq\frac{1}{2}\frac{\lambda_1{(\Omega)}-\lambda}{\lambda_1(\Omega)}\rho^2-\frac{1}{2^*_{\alpha}}
		S_\alpha^{-\frac{2^*_{\alpha}}{2}}\rho^{2^*_{\alpha}}+\frac{\mu}{2}|\Omega|\\
		&=\frac{\alpha+2}{2(N+\alpha)}\Big(\frac{\lambda_1{(\Omega)}-\lambda}
		{\lambda_1(\Omega)}\Big)^\frac{N+\alpha}{\alpha+2}S_\alpha^\frac{N+\alpha}{\alpha+2}+\frac{\mu}{2}|\Omega|\\
		&=\sigma>0,
		\end{align*}
		for any $\|v\|=\rho$. Similar to Case I, we can find a function $\omega\in H_{0,r}^1(\Omega)$ such that $\|\omega\|\geq\rho$ and $I(\omega)<0$.
		
		\textbf{Case III:}~~ $(\lambda,\mu)\in C_0$

		By
		$\mu<0$ and direct computations,  we have
		\begin{equation}\label{eq82}
		\begin{split}
		-\frac{\lambda}{2}\int u_{+}^{2}-\frac{\mu}{2}\int u_{+}^{2}(\log u_{+}^{2}-1)
		& =-\frac{\mu}{2}\int u_{+}^{2}\log (e^{\frac{\lambda}{\mu}-1}u_{+}^{2})\\
		&\geq -\frac{\mu}{2}\int_{\{e^{\frac{\lambda}{\mu}-1}u_+^2\leq1\}} u_{+}^{2}\log (e^{\frac{\lambda}{\mu}-1}u_{+}^{2})\\ &\geq \frac{\mu}{2}e^{-{\frac{\lambda}{\mu}}}|\Omega|.
		\end{split}
		\end{equation}
		Therefore,
		\begin{equation}\label{eq92}
		\begin{split}
		I(u)
		&\geq\frac{1}{2}\int|\nabla u|^2-\frac{1}{2^*_{\alpha}}\frac{\int|x|^\alpha|u_+|^{2^*_{\alpha}}}{(\int|\nabla u|^2)^{\frac{2^*_{\alpha}}{2}}}\cdot\Big(\int|\nabla u|^2\Big)^{\frac{2^*_{\alpha}}{2}}+\frac{\mu}{2}e^{-\frac{\lambda}{\mu}}|\Omega|\\
		&\geq\frac{1}{2}\int|\nabla u|^2-\frac{1}{2^*_{\alpha}}\Big[\frac{(\int|x|^\alpha|u_+|^{2^*_{\alpha}})^{\frac{2}{2^*_{\alpha}}}}{\int|\nabla u|^2}\Big]^{\frac{2^*_{\alpha}}{2}}\cdot\Big(\int|\nabla u|^2\Big)^{\frac{2^*_{\alpha}}{2}}+\frac{\mu}{2}e^{-\frac{\lambda}{\mu}}|\Omega|\\
		&\geq\frac{1}{2}\int|\nabla u|^2-\frac{1}{2^*_{\alpha}}S_\alpha^{-\frac{2^*_{\alpha}}{2}}\Big(\int|\nabla u|^2\Big)^{\frac{2^*_{\alpha}}{2}}+\frac{\mu}{2}e^{-\frac{\lambda}{\mu}}|\Omega|.
		\end{split}
		\end{equation}
		Letting $\sigma=\frac{\alpha+2}{2(N+\alpha)}S_\alpha^\frac{N+\alpha}{\alpha+2}+\frac{\mu}{2}e^{-\frac{\lambda}{\mu}}|\Omega|$ and  $\rho=S_\alpha^\frac{N+\alpha}{2(\alpha+2)}$, then $\sigma,\rho >0$. Following from  $(\lambda,\mu)\in C_0$ and  \eqref{eq92}, we have that
		\begin{align*}
		I(v)\geq\frac{1}{2}\rho^2-\frac{1}{2^*_{\alpha}}S_\alpha^{-\frac{2^*_{\alpha}}{2}}\rho^{2^*_{\alpha}}+\frac{\mu}{2}e^{-\frac{\lambda}{\mu}}|\Omega|
		=\frac{\alpha+2}{2(N+\alpha)}S_\alpha^\frac{N+\alpha}{\alpha+2}+\frac{\mu}{2}e^{-\frac{\lambda}{\mu}}|\Omega|=\sigma>0,
		\end{align*}
		for any $\|v\|=\rho$. Similar to  Case I, we can  also find a function $\omega\in H_{0,r}^1(\Omega)$ such that $\|\omega\|\geq\rho$ and $I(\omega)<0$.
	\end{proof}

	\begin{lemma}
		\label{lemma23}~Assume that $N\geq3$,~ $\lambda\in\R$ and $\mu\in\R\backslash\{0\}$.~Then any $(PS)_c$ sequence $\{u_n\}$ of $I$ must be bounded in $H_{0,r}^1(\Omega)$ for all $c\in\R.$
	\end{lemma}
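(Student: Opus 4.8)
The plan is to extract from the Palais--Smale conditions the two scalar identities obtained by pairing $I'(u_n)$ with the test functions $\tfrac12 u_n$ and $u_n$, and then to exploit an algebraic cancellation of the logarithmic integral. Write $d_\alpha:=\tfrac12-\tfrac1{2^*_{\alpha}}=\tfrac{\alpha+2}{2(N+\alpha)}>0$ and abbreviate $P_n:=\int|x|^\alpha (u_n)_+^{2^*_{\alpha}}$, $Q_n:=\int (u_n)_+^2$, $L_n:=\int (u_n)_+^2\log (u_n)_+^2$. Since $\{u_n\}$ is a $(PS)_c$ sequence, $I(u_n)=c+o(1)$ and $|\langle I'(u_n),u_n\rangle|\le \|I'(u_n)\|_{*}\|u_n\|=o(1)\|u_n\|$. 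A direct computation gives the crucial identity $I(u_n)-\tfrac12\langle I'(u_n),u_n\rangle=d_\alpha P_n+\tfrac{\mu}{2}Q_n$, in which the logarithmic term has disappeared (and so has $\lambda$); hence
\begin{equation*}
d_\alpha P_n+\tfrac{\mu}{2}Q_n=c+o(1)+o(1)\|u_n\|.\tag{A}
\end{equation*}
On the other hand $\langle I'(u_n),u_n\rangle=\|u_n\|^2-P_n-\lambda Q_n-\mu L_n$, so $\|u_n\|^2=P_n+\lambda Q_n+\mu L_n+o(1)\|u_n\|$; substituting $P_n$ from (A) eliminates the critical integral and leaves the master relation
\begin{equation*}
\|u_n\|^2=\tfrac{c}{d_\alpha}+\Big(\lambda-\tfrac{\mu}{2d_\alpha}\Big)Q_n+\mu L_n+o(1)\big(1+\|u_n\|\big).\tag{$\sharp$}
\end{equation*}
Boundedness of $\|u_n\|$ will therefore follow once the right-hand side of $(\sharp)$ is shown to be $o(\|u_n\|^2)+O(1)$, which reduces everything to controlling $Q_n$ and $L_n$. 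Note that because every nonlinear term in $I$ involves only $u_+$, the quantity $\|u_n\|^2$ in $(\sharp)$ is the full $H_{0,r}^1(\Omega)$ norm, so no separate treatment of the negative part is needed.

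The two estimates I would use are the continuous embedding $H_{0,r}^1(\Omega)\hookrightarrow L^q(\Omega)$ for every $2\le q\le 2^*$ (the case $V\equiv1$, $\theta=0$ of Lemma \ref{lemma21}), and the elementary splitting of $L_n$ at the level set $\{(u_n)_+=1\}$: on $\{(u_n)_+\le1\}$ one has $-C|\Omega|\le (u_n)_+^2\log (u_n)_+^2\le 0$, so that part is harmless, and only the nonnegative tail $L_n^+:=\int_{\{(u_n)_+>1\}}(u_n)_+^2\log (u_n)_+^2$ needs attention. When $\mu>0$, both summands in (A) are nonnegative, so (A) at once yields $P_n\le C(1+\|u_n\|)$ and $Q_n\le C(1+\|u_n\|)=o(\|u_n\|^2)$. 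The only danger in $(\sharp)$ is then the positive term $\mu L_n$, whose superquadratic growth I would tame using $\log t^2\le\tfrac{2}{\delta}t^{\delta}$ for $t\ge1$, so $L_n^+\le\tfrac{2}{\delta}\int (u_n)_+^{2+\delta}$, followed by interpolation of $L^{2+\delta}$ between $L^2$ and $L^{2^*}$; with $Q_n\le C(1+\|u_n\|)$ controlling the $L^2$-norm and $\|(u_n)_+\|_{2^*}\le C\|u_n\|$ controlling the $L^{2^*}$-norm, a small $\delta>0$ makes the resulting power of $\|u_n\|$ strictly less than $2$, whence $L_n=o(\|u_n\|^2)$. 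Feeding this into $(\sharp)$ gives $\|u_n\|^2\le o(\|u_n\|^2)+O(1)$, so $\{u_n\}$ is bounded.

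When $\mu<0$ the sign of the logarithmic term is favorable. Writing $\mu L_n=-|\mu|L_n$, the tail $-|\mu|L_n^+$ is nonpositive and genuinely coercive in $Q_n$: for any $K>0$ one has $(u_n)_+^2\le\tfrac1{2K}(u_n)_+^2\log (u_n)_+^2$ on $\{(u_n)_+>e^{K}\}$, hence $Q_n\le e^{2K}|\Omega|+\tfrac1{2K}L_n^+$. Choosing $K$ large enough that $2K|\mu|\ge\big|\lambda-\tfrac{\mu}{2d_\alpha}\big|$ lets the term $-|\mu|L_n^+$ absorb $\big(\lambda-\tfrac{\mu}{2d_\alpha}\big)Q_n$, so that $\big(\lambda-\tfrac{\mu}{2d_\alpha}\big)Q_n+\mu L_n\le C$. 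Then $(\sharp)$ reads $\|u_n\|^2\le C+o(1)(1+\|u_n\|)$, again giving boundedness.

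The main obstacle is precisely the logarithmic integral $L_n$, which is genuinely superquadratic and of indefinite sign; the whole point of the combination $I-\tfrac12\langle I',\cdot\rangle$ is that it removes $L_n$ from the leading balance, after which only a one-sided estimate is required, namely an interpolation bound when $\mu>0$ and a coercivity/absorption argument when $\mu<0$. A secondary technical point is that the weight $|x|^\alpha$ degenerates at the origin when $\alpha>0$, so the tail of $L_n$ must be estimated through the \emph{unweighted} embedding $H_{0,r}^1(\Omega)\hookrightarrow L^{2^*}(\Omega)$ rather than through the weighted critical integral $P_n$; Lemma \ref{lemma21} with $\theta=0$ is exactly what makes this available.
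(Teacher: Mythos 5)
Your proof is correct, and structurally it rests on the same two Palais--Smale combinations as the paper's: your identity (A) is exactly the paper's computation of $I(u_n)-\tfrac12\langle I'(u_n),u_n\rangle=\tfrac{\alpha+2}{2(N+\alpha)}P_n+\tfrac{\mu}{2}Q_n$ (which for $\mu>0$ produces the same linear bound $Q_n\le C(1+\|u_n\|)$ as \eqref{bu2.12}), and your master relation $(\sharp)$, once multiplied by $d_\alpha$, is precisely the paper's combination $I(u_n)-\tfrac{1}{2^*_\alpha}\langle I'(u_n),u_n\rangle$, so the elimination of the critical integral is the same algebra in different order. The genuine divergence is how the logarithmic integral is tamed when $\mu>0$: the paper invokes the logarithmic Sobolev inequality $\int u^2\log u^2\le\tfrac{a}{\pi}\|u\|^2+(\log|u|_2^2-N(1+\log a))|u|_2^2$ (quoted from \cite{shu} or \cite[Theorem 8.14]{lieb1}), choosing $a$ small so that the gradient term is absorbed, and then controls $|(u_n)_+|_2^2\log|(u_n)_+|_2^2$ via $|s^2\log s^2|\le Cs^{2-\delta}+Cs^{2+\delta}$; you instead use the elementary pointwise bound $\log t^2\le\tfrac{2}{\delta}t^{\delta}$ together with $L^2$--$L^{2^*}$ interpolation, and your computation does close: with $\tfrac{1}{2+\delta}=\tfrac{1-\theta}{2}+\tfrac{\theta}{2^*}$ one finds $\theta=\tfrac{N\delta}{2(2+\delta)}$, hence $L_n^+\le C\big(1+\|u_n\|\big)^{1+\frac{(N+2)\delta}{4}}$, which is $o(\|u_n\|^2)+O(1)$ whenever $0<\delta<\tfrac{4}{N+2}$. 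Your variant is more self-contained (only the Sobolev embedding and H\"older are needed, no external log-Sobolev input), while the paper's is shorter once the cited inequality is granted; both ultimately hinge on the same structural fact that (A) forces $|(u_n)_+|_2\le C(1+\|u_n\|)^{1/2}$, a sublinear growth that makes any superquadratic-looking term subcritical. For $\mu<0$ the two arguments are essentially the same absorption argument in different clothing: the paper folds the quadratic terms into $\log\big(e^{\frac{\lambda}{\mu}-\frac{N+\alpha}{\alpha+2}}(u_n)_+^2\big)$ and applies the pointwise bound $t\log t\ge -e^{-1}$, whereas you cut at the level set $\{(u_n)_+>e^K\}$ and let $-|\mu|L_n^+$ absorb $\big|\lambda-\tfrac{\mu}{2d_\alpha}\big|Q_n$; both yield a uniform constant on the right-hand side and hence boundedness.
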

	\begin{proof}
		According to the definition of $(PS)_c$,  we have that, as $n\rightarrow\infty$,
		\begin{align}
		I(u_n) \rightarrow c\quad\hbox{and}\quad I'(u_n)\rightarrow0\quad\hbox{in}~ H^{-1}(\Omega).
		\end{align}
		Therefore,
		\begin{equation}\label{eq29}
		\begin{split}
		I(u_n)&=\frac{1}{2}\int|\nabla u_n|^2-\frac{1}{2^*_{\alpha}}\int|x|^\alpha|(u_n)_+|^{2^*_{\alpha}}-\frac{\lambda}{2}\int|(u_n)_+|^2-\frac{\mu}{2}\int (u_n)_+^2(\log (u_n)_+^2-1)\\
		&=c+o_n(1),\\
		\end{split}
		\end{equation}
		and
		\begin{equation}\label{eq210}
		\int|\nabla u_n|^2-\int|x|^\alpha|(u_n)_+|^{2^*_{\alpha}}-\lambda\int|(u_n)_+|^2-\mu\int (u_n)_+^2\log (u_n)_+^2
		=o_n(1)\|u_n\|.
		\end{equation}
		
		We   divide the proof  into two cases.
		
		\textbf{Case I:} ~~~
		$\mu>0$

		It follows from \eqref{eq29} and \eqref{eq210} that
		\begin{equation}\label{eq112}
		\begin{split}
		c+o_n(1)+o_n(1)\|u_n\|&=I(u_n)-\frac{1}{2}\langle I'(u_n),u_n \rangle\\
		&=\frac{\alpha+2}{2(N+\alpha)}\int|x|^\alpha|(u_n)_+|^{2^*_{\alpha}}+\frac{\mu}{2}\int|(u_n)_+|^2\\
		&\geq\frac{\mu}{2}\int|(u_n)_+|^2.
		\end{split}
		\end{equation}
		So we obtain
		\begin{equation}\label{bu2.12}
		C+C\|u_n\|\geq|(u_n)_+|_2^2.
		\end{equation}
		Using  \eqref{eq29} and \eqref{eq210} again,  we have that, for $n$ large enough,
		\begin{align*}
		&2c+\|u_n\|\\
		&\geq I(u_n)-\frac{1}{2^*_{\alpha}}\langle I'(u_n),u_n \rangle\\
		&=\frac{\alpha+2}{2(N+\alpha)}\int|\nabla u|^2-\frac{\lambda(\alpha+2)}{2(N+\alpha)}\int (u_n)_+^2+\frac{\mu}{2}\int (u_n)_+^2-\frac{\mu(\alpha+2)}{2(N+\alpha)}\int (u_n)_+^2\log (u_n)_+^2,
		\end{align*}
		which, together with the following inequality(see \cite{shu} or \cite[Theorem 8.14]{lieb1})
		\begin{equation}
		\int u^2\log u^2\leq\frac{a}{\pi}\|u\|^2+(\log |u|_2^2-N(1+\log a))|u|_2^2
		\quad\hbox{for~any}~u\in H_0^1(\Omega)~
		\hbox{and~any}~a>0,
		\end{equation}
		implies that
		\begin{equation}
		2c+\|u_n\|+\frac{\lambda(\alpha+2)}{2(N+\alpha)}\int (u_n)_+^2-\frac{\mu}{2}\int (u_n)_+^2+\frac{\mu(\alpha+2)}{2(N+\alpha)}\int (u_n)_+^2\log (u_n)_+^2\geq\frac{\alpha+2}{2(N+\alpha)}\int|\nabla u|^2.
		\end{equation}
		According to the facts that $H_{0}^1(\Omega)\hookrightarrow L^p(\Omega),~~1\leq p\leq2^*$ and $|s^2\log s^2|\leq Cs^{2-\delta}+Cs^{2+\delta}$ with $\delta \in(0,1)$, we obtain
		\begin{equation*}
		\begin{split}
		\frac{\alpha+2}{2(N+\alpha)}\|u_{n}\|^{2} & \leq 2 c+\|u_{n}\|+C \int(u_{n})_{+}^{2}+\frac{\mu(\alpha+2)}{2(N+\alpha)}\int\left(u_{n}\right)_{+}^{2} \log \left(u_{n}\right)_{+}^{2} \\
		& \leq 2c+C\left\|u_{n}\right\|+\frac{\mu(\alpha+2)}{2(N+\alpha)}\left[\frac{a}{\pi}\left\|u_{n}\right\|^{2}+\left(\log \left|\left(u_{n}\right)_{+}\right|_{2}^{2}-N(1+\log a)\right)\left|\left(u_{n}\right)_{+}\right|_{2}^{2}\right] \\
		& \leq 2c+C\left\|u_{n}\right\|+\frac{\alpha+2}{4(N+\alpha)}\left\|u_{n}\right\|^{2}+\left||\left(u_{n}\right)_{+}|_{2} ^{2} \log |\left(u_{n}\right)_{+}|_{2}^{2}\right|+C\left|\left(u_{n}\right)_{+}\right|_{2} ^{2} \\
		& \leq 2c+C\left\|u_{n}\right\|+\frac{\alpha+2}{4(N+\alpha)}\left\|u_{n}\right\|^{2}+C\left|\left(u_{n}\right)_{+}\right|_{2}^{2-\delta}+C\left|\left(u_{n}\right)_{+}\right|_{2}^{2+\delta}+C\left|\left(u_{n}\right)_{+}\right|_{2}^{2} \\
		& \leq 2c+C\left\|u_{n}\right\|+\frac{\alpha+2}{4(N+\alpha)}\left\|u_{n}\right\|^{2}+C\left(C+C\left\|u_{n}\right\|\right)^{\frac{2-\delta}{2}}+C\left(C+C\left\|u_{n}\right\|\right)^{\frac{2+\delta}{2}},
		\end{split}
		\end{equation*}
		where  $a>0$  with  $\frac{a}{\pi} \mu<\frac{1}{2}$ and the last inequality has used \eqref{bu2.12} and which implies that  there exists  $C>0$  such that  $\left\|u_{n}\right\|<C$.
		
		\textbf{Case II:}~~$\mu<0$

		If $n$ is large enough, then we have
		\begin{align*}
		2c+\|u_n\|&\geq\frac{\alpha+2}{2(N+\alpha)}\int|\nabla u|^2-\frac{\lambda(\alpha+2)}{2(N+\alpha)}\int (u_n)_+^2+\frac{\mu}{2}\int (u_n)_+^2-\frac{\mu(\alpha+2)}{2(N+\alpha)}\int (u_n)_+^2\log (u_n)_+^2\\
		&=\frac{\alpha+2}{2(N+\alpha)}\|u_n\|^2-\frac{\mu(\alpha+2)}{2(N+\alpha)}\Big(\frac{\lambda}{\mu}\int (u_n)_+^2-\frac{N+\alpha}{\alpha+2}\int (u_n)_+^2+\int (u_n)_+^2\log (u_n)_+^2\Big)\\
		&=\frac{\alpha+2}{2(N+\alpha)}\|u_n\|^2-\frac{\mu(\alpha+2)}{2(N+\alpha)}\Big(\int(u_n)_+^2\log \Big(e^{\frac{\lambda}{\mu}-\frac{N+\alpha}{\alpha+2}}(u_n)_+^2\Big)\Big)\\
		&\geq\frac{\alpha+2}{2(N+\alpha)}\|u_n\|^2-\frac{\mu(\alpha+2)}{2(N+\alpha)}\int_{\{e^{\frac{\lambda}{\mu}-\frac{N+\alpha}{\alpha+2}}(u_n)_+^2\leq1\}}(u_n)_+^2\log \Big(e^{\frac{\lambda}{\mu}-\frac{N+\alpha}{\alpha+2}}(u_n)_+^2\Big)\\
		&\geq\frac{\alpha+2}{2(N+\alpha)}\|u_n\|^2-\frac{\mu(\alpha+2)}{2(N+\alpha)}\int_{\{e^{\frac{\lambda}{\mu}-\frac{N+\alpha}{\alpha+2}}(u_n)_+^2\leq1\}}-e^{\frac{N+\alpha}{\alpha+2}-\frac{\lambda}{\mu}-1}\\
		&\geq\frac{\alpha+2}{2(N+\alpha)}\|u_n\|^2+\frac{\mu(\alpha+2)}{2(N+\alpha)}e^{\frac{N+\alpha}{\alpha+2}-\frac{\lambda}{\mu}-1}|\Omega|,
		\end{align*}
		which implies that $\{u_n\}$ is bounded in $H_{0,r}^1(\Omega)$.
	\end{proof}

	\begin{lemma}\label{lemma51}
		Assume that $N\geq3$ and $\mu<0$. Then $-\infty<\tilde{C}_{\rho}<0$, where $\tilde{C}_{\rho}$ is given in \eqref{cdef1}.
	\end{lemma}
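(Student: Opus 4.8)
The plan is to prove the two bounds separately: that $\tilde{C}_{\rho}$ is finite from below, and that it is strictly negative. Throughout I use that $\rho>0$ is the radius furnished by Lemma \ref{lemma22}, the Sobolev-type inequality defining $S_\alpha$ (see \eqref{710}), and the Poincar\'e inequality $\int u_+^2\leq\lambda_1(\Omega)^{-1}\|u\|^2$.

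\textbf{Lower bound.} First I would show $\tilde{C}_{\rho}>-\infty$ by bounding $I$ from below uniformly on the ball $\{\,|\nabla u|_2<\rho\,\}$. Since $\mu<0$, the elementary estimate \eqref{eq62} gives $-\frac{\mu}{2}\int u_+^2(\log u_+^2-1)\geq\frac{\mu}{2}|\Omega|$, a fixed (negative) constant. Combining this with $\int|x|^\alpha|u_+|^{2^*_{\alpha}}\leq S_\alpha^{-2^*_{\alpha}/2}\|u\|^{2^*_{\alpha}}$ and $-\frac{\lambda}{2}\int u_+^2\geq-\frac{|\lambda|}{2\lambda_1(\Omega)}\|u\|^2$, I obtain
\[
I(u)\geq\frac12\|u\|^2-\frac{1}{2^*_{\alpha}}S_\alpha^{-2^*_{\alpha}/2}\|u\|^{2^*_{\alpha}}-\frac{|\lambda|}{2\lambda_1(\Omega)}\|u\|^2+\frac{\mu}{2}|\Omega|.
\]
The right-hand side is a continuous function of $\|u\|$ alone, hence bounded below on the bounded interval $[0,\rho)$, so $\tilde{C}_{\rho}>-\infty$.

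\textbf{Strict negativity.} Next I would exhibit an element of the ball on which $I$ is negative. Fix a bounded function $0\leq\varphi\in H_{0,r}^1(\Omega)\setminus\{0\}$ (bounded so that all the logarithmic integrals below are plainly finite), and consider $I(t\varphi)$ for small $t>0$. Expanding $\log(t^2\varphi^2)=\log t^2+\log\varphi^2$ yields
\[
\frac{I(t\varphi)}{t^2}=\frac12\int|\nabla\varphi|^2-\frac{t^{2^*_{\alpha}-2}}{2^*_{\alpha}}\int|x|^\alpha\varphi^{2^*_{\alpha}}-\frac{\lambda}{2}\int\varphi^2-\frac{\mu}{2}(\log t^2)\int\varphi^2-\frac{\mu}{2}\int\varphi^2(\log\varphi^2-1).
\]
As $t\to0^+$ the term $t^{2^*_{\alpha}-2}\to0$ (because $2^*_{\alpha}>2$), and all terms except the fourth remain bounded, while the fourth term tends to $-\infty$: indeed $-\mu/2>0$, $\int\varphi^2>0$, and $\log t^2\to-\infty$. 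Hence $I(t\varphi)/t^2\to-\infty$, so there is $t_0>0$ with $I(t_0\varphi)<0$; shrinking $t_0$ if necessary I also arrange $|\nabla(t_0\varphi)|_2=t_0\|\varphi\|<\rho$. Then $t_0\varphi$ lies in the ball and $\tilde{C}_{\rho}\leq I(t_0\varphi)<0$.

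\textbf{Main obstacle.} There is no deep analytic difficulty here; the one point requiring care is the behaviour of the logarithmic term near the origin. Because $\mu<0$, it is precisely the term $-\frac{\mu}{2}u_+^2\log u_+^2$ that pushes the functional below zero for small-amplitude functions, and one must verify it dominates the quadratic gradient and linear terms as $t\to0^+$ (which it does, since $t^2\log t^2$ overwhelms $t^2$) while the critical term $t^{2^*_{\alpha}}$ is negligible at that scale. Keeping these competing powers of $t$ straight is the only subtlety, and choosing $\varphi$ bounded avoids any integrability concern for $\int\varphi^2\log\varphi^2$.
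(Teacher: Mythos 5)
Your proof is correct and takes essentially the same route as the paper's: a uniform lower bound for $I$ on the ball $\{\|u\|<\rho\}$ obtained by elementary estimates (the paper packages the $\lambda$- and $\mu$-terms together via \eqref{eq82}, while you use \eqref{eq62} plus Poincar\'e for the $\lambda$-term --- an immaterial bookkeeping difference), followed by scaling a fixed nonnegative $\varphi$ and observing that, since $\mu<0$, the term $-\frac{\mu}{2}t^2\log t^2\int\varphi^2$ forces $I(t\varphi)<0$ as $t\to0^+$ while $t\|\varphi\|<\rho$. No gaps.
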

	\begin{proof}
		For any $u\in A:=\{|\nabla u|_2<\rho\}$, by \eqref{eq82}, we have that
		\begin{align*}
		I(u)&=\frac{1}{2}\int|\nabla u|^2-\frac{1}{2^*_{\alpha}}\int|x|^\alpha{|u_+|}^{2^*_{\alpha}}-\frac{\mu}{2}\int u_+^2(\log u_+^2+\frac{\lambda}{\mu}-1)\\
		&\geq-C_1\|u\|^{2^*_{\alpha}}-C_2|\Omega|\\
		&>-C_1\rho^{2^*_{\alpha}}-C_2|\Omega|,
		\end{align*}
		where $C_1,~C_2>0$ and which tells us that $\tilde{C}_{\rho}>-\infty$.
		
		Letting  $0\leq\varphi\in H_{0,r}^1(\Omega)\backslash\{0\}$ be a fixed function, then we have that, for $t>0$ small enough,
		\begin{align*}
		I(t\varphi)&=\frac{t^2}{2}\int|\nabla \varphi|^2-\frac{t^{2^*_{\alpha}}}{2^*_{\alpha}}\int|x|^{\alpha}|\varphi|^{2^*_{\alpha}}-\frac{\mu}{2}t^2\int\varphi^2(\log(t^2\varphi^2)+\frac{\lambda}{\mu}-1)\\
		&=t^2\big(\frac{1}{2}\int|\nabla \varphi|^2-\frac{t^{2^*_{\alpha}-2}}{2^*_{\alpha}}\int|x|^{\alpha}|\varphi|^{2^*_{\alpha}}-\frac{\mu}{2}\log t^2\int\varphi^2-\frac{\mu}{2}\int\varphi^2(\log\varphi^2+\frac{\lambda}{\mu}-1)\big)\\
		&<0.
		\end{align*}
		Therefore, we can choose $t_0\in\R^+$ small enough such that
		\begin{align}
		I(t_0\varphi) <0\quad\hbox{and}\quad|\nabla (t_0\varphi)|_2<\rho,
		\end{align}
		which implies that $\tilde{C}_{\rho}\leq I(t_0\varphi)<0$.
		We obtain
		$-\infty<\tilde{C}_{\rho}<0.$
	\end{proof}
	\begin{lemma}\label{lemma52}
		Assume that $\tilde{C}_{\rho}$ can be obtained by $\bar{u}$. If  $N\geq3$ and $\mu<0$, then $-\infty<\tilde{C}_{\kappa}<0$,
		where $\tilde{C}_{\kappa}$ is given in \eqref{cdef1}.
	\end{lemma}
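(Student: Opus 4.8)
Lemma 5.2 asserts that $\tilde{C}_{\kappa} \in (-\infty, 0)$, under the hypothesis that $\tilde{C}_{\rho}$ is attained by some $\bar{u}$, with $N \geq 3$ and $\mu < 0$.

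Let me understand the objects. We have $\tilde{C}_{\kappa} = \inf_{u \in \kappa} I(u)$ where $\kappa = \{u : I'(u) = 0\}$, the set of critical points. And $\tilde{C}_{\rho} = \inf_{|\nabla u|_2 < \rho} I(u)$. We're told $\tilde{C}_{\rho}$ is attained by $\bar{u}$.

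First, I should establish the upper bound $\tilde{C}_{\kappa} < 0$. The key observation: if $\tilde{C}_{\rho}$ is attained by $\bar{u}$ in the OPEN ball $A = \{|\nabla u|_2 < \rho\}$, then $\bar{u}$ is an interior minimum of $I$ over this open set. Therefore $\bar{u}$ is a free critical point of $I$, i.e., $I'(\bar{u}) = 0$, so $\bar{u} \in \kappa$. This requires knowing $\bar{u}$ is genuinely interior, which follows because by Lemma 5.1, $\tilde{C}_{\rho} < 0$ while $I(v) \geq \sigma > 0$ on the boundary $\|v\| = \rho$ (from Lemma 2.2(i)). So the minimizer cannot sit on the boundary sphere — it's interior. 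Hence $\bar{u} \in \kappa$, giving $\tilde{C}_{\kappa} \leq I(\bar{u}) = \tilde{C}_{\rho} < 0$ by Lemma 5.1.

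For the lower bound $\tilde{C}_{\kappa} > -\infty$: I would take any $u \in \kappa$, so $I'(u) = 0$, which gives the Pohozaev-type/Nehari identity $\langle I'(u), u\rangle = 0$. Combining $I(u)$ with a suitable multiple of this identity (the standard $I(u) - \frac{1}{2^*_\alpha}\langle I'(u), u\rangle$ computation, exactly as in the proof of Lemma 2.3) eliminates the critical term and produces
\begin{align*}
I(u) = \frac{\alpha+2}{2(N+\alpha)}\int|\nabla u|^2 - \frac{\mu(\alpha+2)}{2(N+\alpha)}\int u_+^2\log\big(e^{\frac{\lambda}{\mu}-\frac{N+\alpha}{\alpha+2}}u_+^2\big).
\end{align*}
Since $\mu < 0$, the same elementary estimate used in Lemma 2.3, Case II — bounding $\int u_+^2 \log(e^{c}u_+^2)$ from above on the sublevel set and using $-s\log s \leq e^{-1}$ — shows the logarithmic term is bounded below uniformly. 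This yields $I(u) \geq \frac{\alpha+2}{2(N+\alpha)}\|u\|^2 + \frac{\mu(\alpha+2)}{2(N+\alpha)} e^{\frac{N+\alpha}{\alpha+2}-\frac{\lambda}{\mu}-1}|\Omega| \geq \frac{\mu(\alpha+2)}{2(N+\alpha)} e^{\frac{N+\alpha}{\alpha+2}-\frac{\lambda}{\mu}-1}|\Omega|$, a finite constant independent of $u$. Taking the infimum over $\kappa$ gives $\tilde{C}_{\kappa} > -\infty$.

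The main obstacle, and the step deserving the most care, is justifying that $\bar{u} \in \kappa$ rigorously — that an interior minimizer of the constrained problem is an \emph{unconstrained} critical point. The subtlety is that $\bar{u}$ minimizes only over the open ball, so I must confirm it does not lie on the sphere $\{\|u\| = \rho\}$ (handled by the sign comparison $\tilde{C}_{\rho} < 0 < \sigma$ above) and that $I$ is $C^1$ so the first-order condition $I'(\bar{u}) = 0$ follows from interiority. Both are available: $I \in C^1(H^1_{0,r}(\Omega))$ by the structure of the nonlinearity, and the strict sign separation guarantees interiority. I would also note that $\kappa \neq \varnothing$ is established in passing, since $\bar{u} \in \kappa$, so the infimum defining $\tilde{C}_{\kappa}$ is over a nonempty set and both bounds are meaningful.
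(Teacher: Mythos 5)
Your proof is correct and follows essentially the same route as the paper: the upper bound $\tilde{C}_{\kappa}\leq I(\bar{u})=\tilde{C}_{\rho}<0$ comes from the fact that the minimizer $\bar{u}$ of $I$ over the open set $\{|\nabla u|_2<\rho\}$ is a free critical point, and the lower bound uses the identity $I(u)=I(u)-\frac{1}{2^*_{\alpha}}\langle I'(u),u\rangle$ together with the elementary estimate on $s^2\log\bigl(e^{c}s^2\bigr)$, producing the same constant $\frac{\mu(\alpha+2)}{2(N+\alpha)}e^{\frac{N+\alpha}{\alpha+2}-\frac{\lambda}{\mu}-1}|\Omega|$. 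Your extra verification of interiority via the sign separation $\tilde{C}_{\rho}<0<\sigma$ is sound but not strictly needed, since attainment in the open ball already places $\bar{u}$ in its interior; the paper simply asserts $I'(\bar{u})=0$ from the attainment assumption.
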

	\begin{proof}
		According to the assumptions, we have that  $I'(\bar{u})=0,~I(\bar{u})=\tilde{C}_{\rho}<0$. So $\tilde{C}_{\kappa}\leq I(\bar{u})=\tilde{C}_{\rho}<0$.
		For any $u\in\kappa$, where $\kappa$ is given in \eqref{kappadef}, we have that
		\begin{align*}
		I(u)&= I(u)-\frac{1}{2^*_{\alpha}}\langle I'(u),u \rangle
		=\frac{\alpha+2}{2(N+\alpha)}\int|\nabla u|^2-\frac{\mu(\alpha+2)}{2(N+\alpha)}\int u^2\log\left(e^{\frac{\lambda}{\mu}-\frac{N+\alpha}{\alpha+2}}u^2\right)\\
		&\geq -\frac{\mu(\alpha+2)}{2(N+\alpha)}\int u^2\log\left(e^{\frac{\lambda}{\mu}-\frac{N+\alpha}{\alpha+2}}u^2\right)
		\geq \frac{\mu(\alpha+2)}{2(N+\alpha)}e^{\frac{N+\alpha}{\alpha+2}-\frac{\lambda}{\mu}-1}|\Omega|
		>-\infty.
		\end{align*}
		Therefore, we have $-\infty<\tilde{C}_{\kappa}<0$.
	\end{proof}
	\begin{lemma}\label{lemma53}
		Assume that $I'(u)=0$ with $u>0$, and set $g(t):=I(tu),~t>0$. Then $g(t)$ has at most two extreme points.
	\end{lemma}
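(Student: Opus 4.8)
The plan is to reduce the claim to an elementary one-variable sign analysis of $g'(t)$. Since $u>0$, for every $t>0$ we have $(tu)_+=tu$, so the truncations disappear and $g(t)=I(tu)$ becomes an explicit function of $t$. Writing $a=\int|\nabla u|^2$, $b=\int|x|^\alpha u^{2^*_\alpha}$, $d=\int u^2$ and $e=\int u^2\log u^2$, and using $\log(t^2u^2)=\log t^2+\log u^2$, one finds
\[
g(t)=\frac{a}{2}t^2-\frac{b}{2^*_\alpha}t^{2^*_\alpha}-\frac{\lambda d}{2}t^2-\frac{\mu e}{2}t^2-\frac{\mu d}{2}t^2\log t^2+\frac{\mu d}{2}t^2.
\]

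First I would differentiate and factor out $t$. A direct computation, using $\frac{d}{dt}(t^2\log t^2)=2t\log t^2+2t$, gives $g'(t)=t\,\phi(t)$ with
\[
\phi(t):=a-\lambda d-\mu e-b\,t^{2^*_\alpha-2}-\mu d\log t^2.
\]
Because $t>0$, the extreme points of $g$ on $(0,+\infty)$ are exactly the sign changes of $\phi$, so it suffices to show that $\phi$ has at most two zeros. Note that $b=\int|x|^\alpha u^{2^*_\alpha}>0$ and $d=\int u^2>0$ since $u>0$ is not identically zero, and that $s:=2^*_\alpha-2=\frac{2(\alpha+2)}{N-2}>0$ because $\alpha>-2$.

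The key step is the sign analysis of $\phi'(t)=-bs\,t^{s-1}-\frac{2\mu d}{t}$. If $\mu\ge0$, both terms are $\le0$ and $-bs\,t^{s-1}<0$, so $\phi'<0$ throughout $(0,+\infty)$; hence $\phi$ is strictly decreasing and has at most one zero. If $\mu<0$, I would multiply by $t>0$ and study $t\phi'(t)=-bs\,t^{s}-2\mu d$: since $-2\mu d>0$ and $t\mapsto t^{s}$ is strictly increasing, $t\phi'(t)$ is strictly decreasing, tends to $-2\mu d>0$ as $t\to0^+$ and to $-\infty$ as $t\to+\infty$, so it vanishes at a unique $t^*>0$ with $\phi'>0$ on $(0,t^*)$ and $\phi'<0$ on $(t^*,+\infty)$. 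Thus $\phi$ is strictly increasing then strictly decreasing, so it has at most two zeros.

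In either case $\phi$ has at most two zeros on $(0,+\infty)$, hence so does $g'(t)=t\phi(t)$, and therefore $g$ has at most two extreme points. The only mild subtlety — and the step I would be most careful about — is the $\mu<0$ case, where one must confirm that $\phi$ is genuinely unimodal (a single interior maximum) rather than merely nonmonotone; this is exactly what the strict monotonicity of $t\phi'(t)$ provides. Note that the hypothesis $I'(u)=0$ is not needed for this bound (it only guarantees that $t=1$ is one of the critical points, a fact used elsewhere); the argument relies solely on $u>0$.
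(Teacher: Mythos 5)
Your proof is correct and takes essentially the same route as the paper: factor $g'(t)=t\,\phi(t)$ and use the sign of $\phi'$ (equivalently, of $t\phi'(t)$) to show $\phi$ is strictly increasing then strictly decreasing when $\mu<0$, hence has at most two zeros. The only differences are cosmetic: the paper first uses $I'(u)=0$ to rewrite the constant $a-\lambda d-\mu e$ as $\int|x|^{\alpha}|u|^{2^*_{\alpha}}$, so your $\phi$ is exactly the paper's $g_1$, and your observations that this hypothesis is dispensable and that the case $\mu\ge 0$ (where the paper's proof implicitly assumes $\mu<0$) gives at most one critical point are both accurate.
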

	\begin{proof}
		By direct computations, we have
		\begin{align*}
		g'(t)&=t\Big(\int|\nabla u|^2-t^{2^*_{\alpha}-2}\int|x|^{\alpha}|u|^{2^*_{\alpha}}-\lambda\int u^2-\mu\int u^2\log t^2-\mu\int u^2\log u^2\Big)\\
		&=t\Big((1-t^{2^*_{\alpha}-2})\int|x|^{\alpha}|u|^{2^*_{\alpha}}-\mu\int u^2\log t^2\Big)\\
		&\triangleq g_1(t)t,
		\end{align*}
		and
		\begin{align*}
		g_1'(t)&=t^{2^*_{\alpha}-3}\Big(-(2^*_{\alpha}-2)\int|x|^{\alpha}|u|^{2^*_{\alpha}}-2t^{2-2^*_{\alpha}}\mu\int u^2\Big).
		\end{align*}
		Since  $u>0$ and $\mu<0$, we have that $g_1'(t)=0$ has a unique root $t_3$, and $g_1'(t)>0$ in $t\in(0,t_3)$, and $g_1'(t)<0$ in $t\in(t_3,+\infty)$. It therefore follows from $\lim\limits_{t\rightarrow0^+}g_1(t)=\lim\limits_{t\rightarrow+\infty}g_1(t)=-\infty$ that $g_1(t)=0$ has at most two roots. Thus, we have that $g'(t)=0$ has at most two roots, which implies that  $g(t)$ has at most two extreme points.
	\end{proof}
	
	\begin{lemma}\label{lemma24}{\rm(see \cite{Deng})}
		Let  $\left\{u_{n}\right\}$  be a bounded sequence in  $H_{0,r}^{1}(\Omega)$  such that  $u_{n} \rightarrow u$  a.e. in  $\Omega$, as  $n \rightarrow \infty$, then
		\begin{equation}
		\lim\limits_{n \rightarrow \infty} \int_{\Omega} u_{n}^{2} \log u_{n}^{2} \mathrm{~d} x=\int_{\Omega} u^{2} \log u^{2} \mathrm{~d} x,
		\end{equation}
		and
		\begin{equation}\label{eq216}
		\lim\limits_{n \rightarrow \infty} \int_{\Omega}\left(u_{n}\right)_{+}^{2} \log \left(u_{n}\right)_{+}^{2} \mathrm{~d} x=\int_{\Omega} u_{+}^{2} \log u_{+}^{2} \mathrm{~d} x .
		\end{equation}
	\end{lemma}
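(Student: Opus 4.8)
The plan is to deduce both limits from Vitali's convergence theorem, whose decisive hypothesis is the equi-integrability of the sequence $\{u_n^2\log u_n^2\}$ on the bounded domain $\Omega$. The starting point is the elementary pointwise estimate
\[
|s^2\log s^2|\leq C_\delta\bigl(|s|^{2-\delta}+|s|^{2+\delta}\bigr),\qquad s\in\R,
\]
valid for every fixed $\delta\in(0,1)$ with $C_\delta>0$ depending only on $\delta$; this is exactly the inequality already exploited in the proof of Lemma \ref{lemma23}. Here the summand $|s|^{2-\delta}$ dominates the integrand near $s=0$ (since $|s|^{-\delta}|\log s^2|\to0$ as $s\to0$), while $|s|^{2+\delta}$ dominates it for large $|s|$.

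First I would fix $\delta\in(0,1)$ small enough that $2+\delta<2^*=\tfrac{2N}{N-2}$, which is possible since $N\geq3$. As $\{u_n\}$ is bounded in $H_{0,r}^1(\Omega)$, the Sobolev embedding $H_0^1(\Omega)\hookrightarrow L^{2^*}(\Omega)$ furnishes a constant $M>0$ with $|u_n|_{2^*}\leq M$ for all $n$; since $|\Omega|<\infty$, this also bounds $|u_n|_{2\pm\delta}$ uniformly. To check equi-integrability, let $E\subset\Omega$ be measurable. Combining the pointwise estimate with Hölder's inequality (with exponents $\tfrac{2^*}{2\pm\delta}$ and their conjugates) gives
\[
\int_E |u_n^2\log u_n^2|\,dx\leq C_\delta\Bigl(|u_n|_{2^*}^{2-\delta}\,|E|^{1-\frac{2-\delta}{2^*}}+|u_n|_{2^*}^{2+\delta}\,|E|^{1-\frac{2+\delta}{2^*}}\Bigr).
\]
Since both powers of $|E|$ are positive and $|u_n|_{2^*}\leq M$, the right-hand side tends to $0$ as $|E|\to0$, uniformly in $n$ (and taking $E=\Omega$ yields in particular a uniform $L^1$-bound), which is precisely the equi-integrability sought.

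Because the map $s\mapsto s^2\log s^2$, extended continuously by $0$ at the origin, is continuous, the a.e. convergence $u_n\to u$ forces $u_n^2\log u_n^2\to u^2\log u^2$ a.e. in $\Omega$. Pairing this with the uniform integrability just established, Vitali's convergence theorem (applicable as $|\Omega|<\infty$) yields $u^2\log u^2\in L^1(\Omega)$ together with the first asserted limit. For the second identity one observes that $(u_n)_+\to u_+$ a.e. and $|(u_n)_+|\leq|u_n|$ pointwise, so $\{(u_n)_+\}$ inherits both the uniform $L^{2^*}$ bound and the identical equi-integrability estimate; a second application of Vitali's theorem then gives \eqref{eq216}.

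The step I expect to be the genuine obstacle is the equi-integrability estimate, specifically the control of the region where $|u_n|$ is large: this is exactly where the strict subcriticality $2+\delta<2^*$ must be used, so that the uniform $L^{2^*}$ bound from Sobolev can absorb the super-quadratic growth of the logarithmic integrand. By contrast, the behaviour near $s=0$ is harmless, as $s^2\log s^2$ vanishes continuously there.
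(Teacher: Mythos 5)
Your proof is correct, but it takes a genuinely different route from the paper's. The paper's argument (taken from \cite{Deng}) is built around a Brezis--Lieb-type identity for the logarithmic nonlinearity: after recording that $\int_\Omega u_n^2\log u_n^2$ and $\int_\Omega u^2\log u^2$ are uniformly bounded, it invokes the splitting
\begin{equation*}
\lim_{n\to\infty}\int_\Omega\Big(u_n^2\log u_n^2-|u_n-u|^2\log|u_n-u|^2\Big)\,\mathrm{d}x=\int_\Omega u^2\log u^2\,\mathrm{d}x,
\end{equation*}
and then kills the remainder, $\int_\Omega|u_n-u|^2\log|u_n-u|^2\,\mathrm{d}x\to 0$, using the same growth bound $|s^2\log s^2|\le C(|s|^{2-\delta}+|s|^{2+\delta})$ combined with the \emph{compactness} of the embedding $H_{0}^1(\Omega)\hookrightarrow L^{2\pm\delta}(\Omega)$. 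You bypass the Brezis--Lieb machinery entirely: pointwise a.e. convergence of the integrands plus equi-integrability --- obtained by H\"older's inequality against the uniform $L^{2^*}$ bound, which uses only the \emph{continuous} Sobolev embedding and the strict subcriticality $2+\delta<2^*$ --- feeds directly into Vitali's convergence theorem on the finite-measure domain $\Omega$. Your route is more elementary and self-contained: it needs neither the cited splitting lemma nor any compact embedding (so it would survive in settings where compactness is unavailable), and it yields $u^2\log u^2\in L^1(\Omega)$ as part of the conclusion rather than as a separate preliminary. What the paper's route buys is economy within its own framework: the Brezis--Lieb-type decomposition is already available from \cite{Deng}, and splittings of exactly this kind are what the authors deploy again in the $(PS)$-sequence analysis, so quoting it here costs them nothing. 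Both arguments ultimately rest on the same pointwise estimate and on $2\pm\delta$ lying strictly inside the subcritical range, and your treatment of the positive parts (the domination $|(u_n)_+|\le|u_n|$ plus a.e. convergence of $(u_n)_+$ to $u_+$) is a valid substitute for the paper's closing remark that the second limit is proved similarly.
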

	\begin{lemma}\label{lemma25}
		Assume that  $N\geq3$ and $\lambda\in\R$.
		If $c<\min\{0, \tilde{C}_{\kappa}\}+\frac{\alpha+2}{2(N+\alpha)}S_\alpha^\frac{N+\alpha}{\alpha+2}$, then $I(u)$ satisfies the $(PS)_c$ condition.
	\end{lemma}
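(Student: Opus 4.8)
The plan is to run the standard Br\'ezis--Nirenberg concentration--compactness scheme, adapted to the weighted critical term and to the logarithmic perturbation. Let $\{u_n\}$ be a $(PS)_c$ sequence. First I would invoke Lemma~\ref{lemma23} to conclude that $\{u_n\}$ is bounded in $H_{0,r}^1(\Omega)$, so that, up to a subsequence, $u_n\rightharpoonup u$ weakly in $H_{0,r}^1(\Omega)$, $u_n\to u$ a.e.\ in $\Omega$, and, by the compact embedding of Lemma~\ref{lemma21} (valid for the subcritical and $L^2$ terms), $u_n\to u$ strongly in $L^p(\Omega;|x|^\alpha)$ for every $p<2^*_{\alpha}$. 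Passing to the limit in $I'(u_n)\to0$, using this compactness for the lower-order term $\lambda\int u_+^2$ together with Lemma~\ref{lemma24} for the logarithmic term, one checks that $u$ is a critical point of $I$, i.e.\ $u\in\kappa$.

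Next I would set $w_n:=u_n-u$, so that $w_n\rightharpoonup0$, and isolate the critical part. The Hilbert structure gives $\|u_n\|^2=\|u\|^2+\|w_n\|^2+o_n(1)$, while the generalized Br\'ezis--Lieb lemma applied to the continuous map $t\mapsto(t_+)^{2^*_{\alpha}}$ with weight $|x|^\alpha$ yields $\int|x|^\alpha|(u_n)_+|^{2^*_{\alpha}}=\int|x|^\alpha|u_+|^{2^*_{\alpha}}+\int|x|^\alpha|(w_n)_+|^{2^*_{\alpha}}+o_n(1)$. Since the $L^2$ and logarithmic terms converge fully (again by Lemmas~\ref{lemma21} and~\ref{lemma24}), subtracting the identity $\langle I'(u),u\rangle=0$ from $\langle I'(u_n),u_n\rangle=o_n(1)$ leaves $\|w_n\|^2-\int|x|^\alpha|(w_n)_+|^{2^*_{\alpha}}=o_n(1)$. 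Writing $\ell:=\lim_n\|w_n\|^2=\lim_n\int|x|^\alpha|(w_n)_+|^{2^*_{\alpha}}$ (both limits existing along a subsequence) and expanding $I(u_n)\to c$ with the same decompositions, I would obtain the energy splitting $c=I(u)+\frac{\alpha+2}{2(N+\alpha)}\ell$, where the constant is $\frac{1}{2}-\frac{1}{2^*_{\alpha}}=\frac{\alpha+2}{2(N+\alpha)}$.

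Finally I would run the level argument. Applying the Sobolev inequality \eqref{710} to $w_n$ (extended by zero to $\R^N$) and using $\int|x|^\alpha|w_n|^{2^*_{\alpha}}\geq\int|x|^\alpha|(w_n)_+|^{2^*_{\alpha}}$ gives $\|w_n\|^2\geq S_\alpha\big(\int|x|^\alpha|(w_n)_+|^{2^*_{\alpha}}\big)^{\frac{2}{2^*_{\alpha}}}$, whence in the limit $\ell\geq S_\alpha\,\ell^{\frac{2}{2^*_{\alpha}}}$. Suppose, for contradiction, that $\ell>0$; then $\ell\geq S_\alpha^{\frac{N+\alpha}{\alpha+2}}$, and since $u\in\kappa$ forces $I(u)\geq\tilde{C}_{\kappa}$ while $I(0)=0$, in either case $I(u)\geq\min\{0,\tilde{C}_{\kappa}\}$. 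Substituting into the energy splitting yields $c\geq\min\{0,\tilde{C}_{\kappa}\}+\frac{\alpha+2}{2(N+\alpha)}S_\alpha^{\frac{N+\alpha}{\alpha+2}}$, contradicting the hypothesis. Therefore $\ell=0$, i.e.\ $\|w_n\|\to0$ and $u_n\to u$ strongly in $H_{0,r}^1(\Omega)$, which is exactly the $(PS)_c$ condition.

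The main obstacle I anticipate is the bookkeeping around the non-smooth positive part $u_+$ inside the critical term: one must justify the Br\'ezis--Lieb splitting for $(\cdot)_+^{2^*_{\alpha}}$ (rather than for $|\cdot|^{2^*_{\alpha}}$) in the presence of the degenerate or singular weight $|x|^\alpha$, and simultaneously verify that the weak limit $u$ is a genuine critical point despite the logarithmic nonlinearity. The latter is precisely what Lemma~\ref{lemma24} is designed to handle, so the compactness of the log term is not an issue; the remaining care lies in extracting both the critical-point relation for $u$ and the energy identity $c=I(u)+\frac{\alpha+2}{2(N+\alpha)}\ell$ with the correct constant, after which the threshold hypothesis closes the argument.
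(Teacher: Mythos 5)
Your proposal is correct and follows essentially the same route as the paper's proof: boundedness via Lemma \ref{lemma23}, identification of the weak limit $u$ as a critical point (using Lemma \ref{lemma24} for the logarithmic term), the Br\'ezis--Lieb splitting of the norm and of the weighted critical term for $v_n=u_n-u$, and the Sobolev threshold argument showing that a nonzero concentration level $\ell\geq S_\alpha^{\frac{N+\alpha}{\alpha+2}}$ would force $c\geq\min\{0,\tilde{C}_\kappa\}+\frac{\alpha+2}{2(N+\alpha)}S_\alpha^{\frac{N+\alpha}{\alpha+2}}$. The only cosmetic difference is that you obtain $I(u)\geq\min\{0,\tilde{C}_\kappa\}$ directly from $u\in\kappa$ and the definition of $\tilde{C}_\kappa$, while the paper computes $I(u)=\frac{\alpha+2}{2(N+\alpha)}\int|x|^\alpha|u_+|^{2^*_\alpha}+\frac{\mu}{2}\int u_+^2$ and distinguishes the signs of $\mu$; both yield the same bound.
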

	\begin{proof}
		Let $\{u_n\}$ be a $(PS)_c$ sequence of $I$.  By Lemma \ref{lemma23},  we know that $\{u_n\}$ is bounded in $H_{0,r}^1(\Omega)$. Therefore, there exists a  $u\in H_{0,r}^1(\Omega)$ such that, up to a subsequence,
		\begin{align}
		\begin{array}{lll}
		u_n\rightharpoonup u~~\hbox{~weakly~in}~H_{0,r}^1(\Omega),\\
		u_n\rightarrow u~~\hbox{strongly~in}~L^q(\Omega;|x|^\alpha),~1\leq q<2^*_{\alpha}, \\
		u_n\rightarrow u~~\hbox{strongly~in}~L^p(\Omega),~1\leq p<2^*, \\
		u_n\rightarrow u~~\hbox{a.e.}~\hbox{in}~\Omega.
		\end{array}
		\end{align}
		Since $\langle I'(u_n),\varphi\rangle\rightarrow0$ as $n\rightarrow\infty$ for any $\varphi\in C_{0,r}^\infty(\Omega)$, $u$ is a weak solution to
		$$-\Delta u=|x|^\alpha|u_+|^{2^*_{\alpha}-2}u_++\lambda u_++\mu u_+\log u_+^2,$$
		which implies that
		$$u\in \kappa,$$
		$$\int |\nabla u|^2=\int|x|^\alpha|u_+|^{2^*_{\alpha}}+\lambda\int u_+^2+\mu\int u_+^2\log u_+^2,$$
		and
		\begin{equation}\label{eq217}
		\begin{split}
		I(u)&=\frac{1}{2}\int |\nabla u|^2-\frac{1}{2^*_{\alpha}}\int |x|^\alpha|u_+|^{2^*_{\alpha}}-\frac{\lambda}{2}\int u_+^2-\frac{\mu}{2}\int u_+^2(\log u_+^2-1)\\
		&=\frac{1}{2}\Big(\int|x|^\alpha|u_+|^{2^*_{\alpha}}+\lambda\int u_+^2+\mu\int u_+^2\log u_+^2\Big)-\frac{1}{2^*_{\alpha}}\int |x|^\alpha|u_+|^{2^*_{\alpha}}\\
		&-\frac{\lambda}{2}\int u_+^2-\frac{\mu}{2}\int u_+^2(\log u_+^2-1)\\
		&=\frac{\alpha+2}{2(N+\alpha)}\int |x|^\alpha|u_+|^{2^*_{\alpha}}+\frac{\mu}{2}\int u_+^2\\
		&\geq\left\{\begin{array}{ll}
		0,~\hbox{for}~\mu\geq 0,\\
		\tilde{C}_\kappa,~\hbox{for}~\mu<0.\\
		\end{array}
		\right.
		\end{split}
		\end{equation}
		Therefore, $$I(u)\geq \min\{0, \tilde{C}_\kappa\}.$$

		By the definition of $(PS)_c$, we have
		$$\int |\nabla u_n|^2-\int |x|^\alpha|(u_n)_+|^{2^*_{\alpha}}-\lambda\int (u_n)_+^2-\mu\int (u_n)_+^2\log (u_n)_+^2=o_n(1),$$
		and
		$$\frac{1}{2}\int |\nabla u_n|^2-\frac{1}{2^*_{\alpha}}\int |x|^\alpha|(u_n)_+|^{2^*_{\alpha}}-\frac{\lambda}{2}\int (u_n)_+^2-\frac{\mu}{2}\int (u_n)_+^2(\log (u_n)_+^2-1)=c+o_n(1).$$
		Setting  $v_n=u_n-u$, then
		$$\int |\nabla v_n|^2-\int |x|^\alpha|(v_n)_+|^{2^*_{\alpha}}=o_n(1),$$
		and
		\begin{equation}\label{eq222}
		I(u)+\frac{1}{2}\int |\nabla v_n|^2-\frac{1}{2^*_{\alpha}}\int |x|^\alpha|(v_n)_+|^{2^*_{\alpha}}=c+o_n(1).
		\end{equation}
		Let
		$$\int |\nabla v_n|^2\rightarrow k,~\hbox{as}~n\rightarrow\infty.$$
		So $$\int |x|^\alpha|(v_n)_+|^{2^*_{\alpha}}\rightarrow k,~\hbox{as}~n\rightarrow\infty.$$
		By the definition of $S_\alpha$, we have
		$$\int |\nabla u|^2\geq S_\alpha \Big(\int|x|^\alpha|u|^{2^*_{\alpha}}\Big)^\frac{2}{2^*_{\alpha}},~~\forall u\in H_{0,r}^1(\Omega),$$
		which implies that  $$k+o_n(1)=\int |\nabla v_n|^2\geq S_\alpha\Big(\int|x|^\alpha|(v_n)_+|^{2^*_{\alpha}}\Big)^\frac{2}{2^*_{\alpha}}=S_\alpha k^{\frac{N-2}{N+\alpha}} +o_n(1).$$
		If $k>0$, then $ k\geq S_\alpha^{{\frac{N+\alpha}{\alpha+2}}}$. By $\eqref{eq222}$, we have
		\begin{align*}
		\min\{0, \tilde{C}_\kappa\}\leq I(u)=\lim\limits_{n\rightarrow\infty}\Big(c-\frac{\alpha+2}{2(N+\alpha)}\int |\nabla v_n|^2\Big)
		=c-\frac{\alpha+2}{2(N+\alpha)}k\leq c-\frac{\alpha+2}{2(N+\alpha)}S_\alpha^\frac{N+\alpha}{\alpha+2},
		\end{align*}
		which is impossible. So $k=0$, which implies  that
		$$u_n\rightarrow u~~\hbox{in}~H_{0,r}^1(\Omega).$$
	\end{proof}

	\section{Estimations on $c_{M}$}
	
	In this section, we assume $\mu>0$. We let $\varphi\in C_0^\infty(\Omega)$  be a radial function satisfying that $\varphi(x)=1$ for $0\leq|x|\leq\rho$, $0\leq \varphi(x)\leq1$ for $\rho\leq|x|\leq2\rho$, $\varphi(x)=0$ for $x\in \Omega\backslash B_{2\rho}(0)$, where $0<\rho\leq 1$. And define
	\begin{equation}\label{eq31}
	U_{\varepsilon,\alpha}(x)=\varphi(x)u_{\varepsilon,\alpha}(x).
	\end{equation}
	\begin{lemma}\label{lemma31}
		If $N\geq3$, then we have, as $\varepsilon\rightarrow0^+$,
		\begin{equation}\label{eq32}
		\int_\Omega|\nabla U_{\varepsilon,\alpha}|^2=S_\alpha^\frac{N+\alpha}{2+\alpha}+O(\varepsilon^{N-2}),
		\end{equation}
		\begin{equation}\label{eq33}
		\int_\Omega|x|^\alpha|U_{\varepsilon,\alpha}|^{2^*_{\alpha}}=S_\alpha^\frac{N+\alpha}{2+\alpha}+O(\varepsilon^{N+\alpha}),
		\end{equation}
		\begin{gather}\label{eq34}
		\int_{\Omega}|x|^{\alpha}\left|U_{\varepsilon,\alpha}\right|^q=
		\begin{cases}
		C_{\alpha,N}^q\omega_N\varepsilon^{\frac{N+\alpha}{2}}\log \frac{1}{\varepsilon}+O(\varepsilon^{\frac{N+\alpha}{2}}),~&\mathrm{if}~ q=\frac{N+\alpha}{N-2},\\
		C_1\varepsilon^{N+\alpha-\frac{q(N-2)}{2}}+o(\varepsilon^{N+\alpha-\frac{q(N-2)}{2}}),~&\mathrm{if}~ q\in (\frac{N+\alpha}{N-2},2_\alpha^*),\\
		C_2\varepsilon^{\frac{q(N-2)}{2}}+o(\varepsilon^{\frac{q(N-2)}{2}}),~&\mathrm{if}~ 1\leq q<\frac{N+\alpha}{N-2},
		\end{cases}
		\end{gather}
		and
		\begin{gather}\label{eq53}
		\int_{\Omega}\left|U_{\varepsilon,\alpha}\right|^p=
		\begin{cases}
		C_{\alpha,N}^q\omega_N\varepsilon^{\frac{N}{2}}\log\frac{1}{\varepsilon}+O(\varepsilon^{\frac{N}{2}}),~&\mathrm{if}~ p=\frac{N}{N-2},\\
		C_3\varepsilon^{N-\frac{p(N-2)}{2}}+o(\varepsilon^{N-\frac{p(N-2)}{2}}),~&\mathrm{if}~ p\in (\frac{N}{N-2}, 2^*),\\
		C_4\varepsilon^{\frac{p(N-2)}{2}}+o(\varepsilon^{\frac{p(N-2)}{2}}),~&\mathrm{if}~ 1\leq p<\frac{N}{N-2},\\
		\end{cases}
		\end{gather}
		where $C_i, i=1,2, 3, 4$ are positive constants.
		Particularly,
		\begin{gather}\label{eq35}
		\int_{\Omega}\left|U_{\varepsilon,\alpha}\right|^2=
		\begin{cases}
		C\varepsilon+o(\varepsilon),~&\mathrm{if}~N=3,\\
		C^2_{\alpha,4}\omega_4\varepsilon^2\log\frac{1}{\varepsilon}+O(\varepsilon^2),~&\mathrm{if}~ N=4,\\
		C\varepsilon^2+o(\varepsilon^2),~&\mathrm{if}~ N\ge5,
		\end{cases}
		\end{gather}
		$$\int_{\Omega}|x|^{\alpha}\left|U_{\varepsilon,\alpha}\right|^q\leq C\varepsilon^{\min\{\frac{q(N-2)}{2},N+\alpha-\frac{q(N-2)}{2}\}}\log\frac{1}{\varepsilon}, ~~q\in[1,2^*_\alpha),$$
		and,
		$$\int_{\Omega}\left|U_{\varepsilon,\alpha}\right|^p\leq C \varepsilon^{\min\{{\frac{p(N-2)}{2}},{N-\frac{p(N-2)}{2}}\}}\log\frac{1}{\varepsilon}, ~p\in[1,2^*),$$
		where all the constants $C$ are positive constants.
	\end{lemma}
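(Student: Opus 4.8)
The plan is to reduce every integral to the fixed profile $W(y):=C_{\alpha,N}\bigl(1+|y|^{2+\alpha}\bigr)^{-\frac{N-2}{2+\alpha}}$ through the scaling identity $u_{\varepsilon,\alpha}(x)=\varepsilon^{-\frac{N-2}{2}}W(x/\varepsilon)$, which is immediate from \eqref{7103}. Since $u_{\varepsilon,\alpha}$ is an extremal for $S_\alpha$ and solves $-\Delta u=|x|^{\alpha}u^{2^*_{\alpha}-1}$ on $\R^N$, multiplying by $u_{\varepsilon,\alpha}$ and integrating gives $\int_{\R^N}|\nabla u_{\varepsilon,\alpha}|^2=\int_{\R^N}|x|^{\alpha}u_{\varepsilon,\alpha}^{2^*_{\alpha}}=S_\alpha^{\frac{N+\alpha}{2+\alpha}}$ (using $\frac{2^*_{\alpha}}{2^*_{\alpha}-2}=\frac{N+\alpha}{\alpha+2}$); these whole-space values supply the leading constants in \eqref{eq32} and \eqref{eq33}. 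The cutoff $\varphi$ from \eqref{eq31} is active only on $\{|x|\ge\rho\}$, where \eqref{7103} yields the tail bounds $u_{\varepsilon,\alpha}(x)\le C\varepsilon^{\frac{N-2}{2}}|x|^{-(N-2)}$ and $|\nabla u_{\varepsilon,\alpha}(x)|\le C\varepsilon^{\frac{N-2}{2}}|x|^{-(N-1)}$; these are the only inputs needed to control all truncation errors.

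For \eqref{eq32} I would expand $|\nabla(\varphi u_{\varepsilon,\alpha})|^2=\varphi^2|\nabla u_{\varepsilon,\alpha}|^2+2\varphi u_{\varepsilon,\alpha}\,\nabla\varphi\cdot\nabla u_{\varepsilon,\alpha}+u_{\varepsilon,\alpha}^2|\nabla\varphi|^2$ and compare with $\int_{\R^N}|\nabla u_{\varepsilon,\alpha}|^2$. The discarded tail $\int_{|x|>\rho}|\nabla u_{\varepsilon,\alpha}|^2\le C\varepsilon^{N-2}\int_{|x|>\rho}|x|^{-2(N-1)}$ and the two cutoff terms over the bounded annulus $\{\rho\le|x|\le2\rho\}$ are each $O(\varepsilon^{N-2})$, since $-2(N-1)<-N$ for $N\ge3$, which gives \eqref{eq32}. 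For \eqref{eq33} the only error is $\int_{|x|>\rho}|x|^{\alpha}u_{\varepsilon,\alpha}^{2^*_{\alpha}}$; because $(N-2)2^*_{\alpha}=2(N+\alpha)$ this is bounded by $C\varepsilon^{N+\alpha}\int_{|x|>\rho}|x|^{-2N-\alpha}=O(\varepsilon^{N+\alpha})$, using $-2N-\alpha<-N$.

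The subcritical estimates \eqref{eq34} and \eqref{eq53} all rest on the single scaling formula $\int_{B_\rho}|x|^{\alpha}u_{\varepsilon,\alpha}^{q}=\varepsilon^{\,N+\alpha-\frac{q(N-2)}{2}}\int_{B_{\rho/\varepsilon}}|y|^{\alpha}W(y)^{q}$ (with $\alpha$ deleted from the weight for \eqref{eq53}). Since $W(y)^q\sim C_{\alpha,N}^q|y|^{-q(N-2)}$ at infinity, the model integrand behaves like $|y|^{\alpha-q(N-2)}$, so as $R\to\infty$ the truncated integral $\int_{B_R}|y|^{\alpha}W^q$ tends to a finite constant when $q>\frac{N+\alpha}{N-2}$, grows like $C_{\alpha,N}^q\omega_N\log R$ in the borderline case $q=\frac{N+\alpha}{N-2}$ (where the integrand is $C_{\alpha,N}^q|y|^{-N}$ and $\omega_N=|S^{N-1}|$), and grows like $R^{\,N+\alpha-q(N-2)}$ when $q<\frac{N+\alpha}{N-2}$. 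Substituting $R=\rho/\varepsilon$ converts these three regimes precisely into the three branches of \eqref{eq34}, with the logarithmic coefficient pinned to $C_{\alpha,N}^q\omega_N$; the contribution of the cutoff region $\{\rho<|x|<2\rho\}$ is strictly higher order in the first two regimes and of the same order (hence merely absorbed into the constant $C_2$) in the third. The identical computation with the weight removed gives \eqref{eq53}, and \eqref{eq35} is its specialization $p=2$, the thresholds $p\gtrless\frac{N}{N-2}$ translating into $N\gtrless4$. Finally, the two uniform bounds follow by retaining the smaller of the exponents $\frac{q(N-2)}{2}$ and $N+\alpha-\frac{q(N-2)}{2}$ and inserting the factor $\log\frac1\varepsilon$, which is sharp only at the borderline and a harmless overestimate elsewhere.

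I expect the only real difficulty to be bookkeeping rather than conceptual: correctly identifying the borderline (logarithmic) exponents, extracting the exact leading constant $C_{\alpha,N}^q\omega_N$ there, and verifying in each regime that the truncation, cross, and core contributions are genuinely of strictly higher order than---or at worst of the same order as---the stated main term, so that the claimed remainders $O(\cdot)$ and $o(\cdot)$ are justified.
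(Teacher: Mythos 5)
Your proposal is correct, and for the core estimates \eqref{eq34}--\eqref{eq53} it is essentially the paper's own route: scale $x=\varepsilon y$, split into the core $B_\rho(0)$ and the cutoff annulus, and read off the three regimes (convergent, logarithmic, power growth) from the decay of your profile $W=u_{1,\alpha}$, i.e.\ $|y|^{\alpha}W^q\sim C_{\alpha,N}^q|y|^{\alpha-q(N-2)}$; the paper implements exactly these asymptotics via a Lagrange mean-value expansion of $r^{N+\alpha-1}(1+r^{2+\alpha})^{-\frac{q(N-2)}{2+\alpha}}$ for $r\ge 1$, and its display \eqref{734} is precisely your observation that the annulus contributes at order $\varepsilon^{\frac{q(N-2)}{2}}$, which is top order only in the third regime. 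The one place you genuinely diverge is \eqref{eq32}--\eqref{eq33}: the paper simply cites \cite{Wangc}, whereas you reprove them from the whole-space identities $\int_{\R^N}|\nabla u_{\varepsilon,\alpha}|^2=\int_{\R^N}|x|^{\alpha}u_{\varepsilon,\alpha}^{2^*_{\alpha}}=S_\alpha^{\frac{N+\alpha}{2+\alpha}}$ together with the tail bounds on $u_{\varepsilon,\alpha}$ and $\nabla u_{\varepsilon,\alpha}$, which makes the lemma self-contained at essentially no extra cost. The only step you should write out rather than assert is that in the regime $1\le q<\frac{N+\alpha}{N-2}$ the annulus term is not merely $O(\varepsilon^{\frac{q(N-2)}{2}})$ but has a genuine limit $c\,\varepsilon^{\frac{q(N-2)}{2}}+o(\varepsilon^{\frac{q(N-2)}{2}})$ with $c>0$ (by dominated convergence, as in \eqref{734}), so that it can legitimately be absorbed into the positive constant $C_2$ of the stated expansion.
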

	\begin{proof}
		\eqref{eq32} and \eqref{eq33} have  been shown  in   \cite{Wangc}. So  it remains to prove \eqref{eq34} and \eqref{eq53}. Next, we only show \eqref{eq34}, and one can prove  \eqref{eq53}   by  a similar method.

		When $r\geq1$,
		by Lagrange's Mean Value Theorem,
		\begin{equation}\label{in6251}
		r^{N+\alpha-1}(1+r^{2+\alpha})^{-\frac{p(N-2)}{2+\alpha}}=r^{N+\alpha-1-p(N-2)}\Big(1-\frac{p(N-2)}{2+\alpha}(1+\xi_1)^{-\frac{p(N-2)}{2+\alpha}-1}\frac{1}{r^{2+\alpha}}\Big),
		\end{equation}
		where $\xi_1\in(0,1)$. Thus, when $q\geq \frac{N+\alpha}{N-2}$,
		\begin{align}\label{721}
		\int_{\Omega}|x|^{\alpha}\left|U_{\varepsilon,\alpha}\right|^q
		&=C_{\alpha,N}^q\int_\Omega|x|^{\alpha}\varphi^q(x)\frac{\varepsilon^{\frac{q(N-2)}{2}}}
		{(\varepsilon^{2+\alpha}+|x|^{2+\alpha})^{\frac{q(N-2)}{2+\alpha}}}\notag\\
		&=C_{\alpha,N}^q\int_{B_{\rho}(0)}|x|^{\alpha}\frac{\varepsilon^{\frac{q(N-2)}{2}}}{(\varepsilon^{2+\alpha}+|x|^{2+\alpha})^{\frac{q(N-2)}{2+\alpha}}}
		+C\varepsilon^{\frac{q(N-2)}{2}}+O(\varepsilon^{\frac{q(N-2)}{2}+2+\alpha})\notag\\
		&=C_{\alpha,N}^q\varepsilon^{N+\alpha-\frac{(N-2)q}{2}}\int_{B_{\frac{\rho}{\varepsilon}}(0)}\frac{|x|^{\alpha}}{(1+|x|^{2+\alpha})^{\frac{(N-2)q}{2+\alpha}}}
		+C\varepsilon^{\frac{q(N-2)}{2}}+O(\varepsilon^{\frac{q(N-2)}{2}+2+\alpha})\notag\\
		&=C_{\alpha,N}^q\varepsilon^{N+\alpha-\frac{(N-2)q}{2}}\omega_N\int_0^1\frac{r^{N+\alpha-1}}{(1+r^{2+\alpha})^{\frac{(N-2)q}{2+\alpha}}}~~\mathrm{d}r\notag\\
		&+C_{\alpha,N}^q\varepsilon^{N+\alpha-\frac{(N-2)q}{2}}\omega_N\int_1^{\frac{\rho}{\varepsilon}}
		\frac{r^{N+\alpha-1}}{(1+r^{2+\alpha})^{\frac{(N-2)q}{2+\alpha}}}~~\mathrm{d}r+C\varepsilon^{\frac{q(N-2)}{2}}+O(\varepsilon^{\frac{q(N-2)}{2}+2+\alpha})\notag\\
		&=C_{\alpha,N}^q\varepsilon^{N+\alpha-\frac{(N-2)q}{2}}\omega_N\int_1^{\frac{\rho}{\varepsilon}}
		\frac{r^{N+\alpha-1}}{(1+r^{2+\alpha})^{\frac{(N-2)q}{2+\alpha}}}~~\mathrm{d}r+C\varepsilon^{N+\alpha-\frac{(N-2)q}{2}}\notag\\
		&+C\varepsilon^{\frac{q(N-2)}{2}}+O(\varepsilon^{\frac{q(N-2)}{2}+2+\alpha})\notag\\
		&=C_{\alpha,N}^q\varepsilon^{N+\alpha-\frac{(N-2)q}{2}}\omega_N\int_1^{\frac{\rho}{\varepsilon}}
		r^{N+\alpha-1-q(N-2)}\Big(1-\frac{q(N-2)}{2+\alpha}(1+\xi_1)^{-\frac{q(N-2)}{2+\alpha}-1}\frac{1}{r^{2+\alpha}}\Big)~~\mathrm{d}r\notag\\
		&+C\varepsilon^{N+\alpha-\frac{(N-2)q}{2}}+C\varepsilon^{\frac{q(N-2)}{2}}+O(\varepsilon^{\frac{q(N-2)}{2}+2+\alpha})\notag\\
		&\triangleq\hbox{I}_1+C\varepsilon^{N+\alpha-\frac{(N-2)q}{2}}+C\varepsilon^{\frac{q(N-2)}{2}}+O(\varepsilon^{\frac{q(N-2)}{2}+2+\alpha}),\notag\\
		\end{align}
		where we have used that
		\begin{align}\label{734}
		\int_{B_{2\rho}(0)\setminus B_\rho(0)}|x|^{\alpha}\left|U_{\varepsilon,\alpha}\right|^q
		&=C_{\alpha,N}^q\int_{\rho\leq |x|\leq 2\rho}|x|^{\alpha}\varphi^q(x)\frac{\varepsilon^{\frac{q(N-2)}{2}}}{(\varepsilon^{2+\alpha}+|x|^{2+\alpha})^{\frac{q(N-2)}{2+\alpha}}}\notag\\
		&=C_{\alpha,N}^q\varepsilon^{\frac{q(N-2)}{2}}\int_{\rho\leq |x|\leq 2\rho}|x|^{\alpha}\varphi^q(x)\frac{1}{(\varepsilon^{2+\alpha}+|x|^{2+\alpha})^{\frac{q(N-2)}{2+\alpha}}}\notag\\
		&=C_{\alpha,N}^q\varepsilon^{\frac{q(N-2)}{2}}\int_{\rho\leq |x|\leq 2\rho}|x|^{\alpha}\varphi^q(x)\Big(|x|^{-{q(N-2)}}+O(\varepsilon^{2+\alpha})\Big)\notag\\
		&=C\varepsilon^{\frac{q(N-2)}{2}}+O(\varepsilon^{\frac{q(N-2)}{2}+2+\alpha}).\notag\\
		\end{align}

		If $q=\frac{N+\alpha}{N-2}$, then  we have
		\begin{align*}
		\hbox{I}_1&=C_{\alpha,N}^q\varepsilon^{\frac{N+\alpha}{2}}\omega_N\int_1^{\frac{\rho}{\varepsilon}}r^{-1}
		\Big(1-\frac{N+\alpha}{2+\alpha}(1+\xi_1)^{-\frac{N+\alpha}{2+\alpha}-1}\frac{1}{r^{2+\alpha}}\Big)~~\mathrm{d}r\\
		&=C_{\alpha,N}^q\omega_N\varepsilon^{\frac{N+\alpha}{2}}\log \frac{1}{\varepsilon}+O(\varepsilon^{\frac{N+\alpha}{2}}),
		\end{align*}
		which, together with \eqref{721}, implies that \eqref{eq34} is true for the case of $q=\frac{N+\alpha}{N-2}$.
		
		If $q\neq \frac{N+\alpha}{N-2}$, then  we have
		\begin{align}\label{731}
		\hbox{I}_1&=C_{\alpha,N}^q\varepsilon^{N+\alpha-\frac{(N-2)q}{2}}\omega_N\int_1^{\frac{\rho}{\varepsilon}}
		r^{N+\alpha-1-q(N-2)}\Big(1-\frac{q(N-2)}{2+\alpha}(1+\xi_1)^{-\frac{q(N-2)}{2+\alpha}-1}\frac{1}{r^{2+\alpha}}\Big)~~\mathrm{d}r\notag\\
		&=C_{\alpha,N}^q\varepsilon^{N+\alpha-\frac{(N-2)q}{2}}\omega_N
		\Big(\frac{r^{N+\alpha-(N-2)q}}{N+\alpha-(N-2)q}
		-C\frac{r^{N+\alpha-(N-2)q-(2+\alpha)}}{N+\alpha-(N-2)q-(2+\alpha)}\Big)\Big|_1^{\frac{\rho}{\varepsilon}}\notag\\
		&=C_{\alpha,N}^q\varepsilon^{N+\alpha-\frac{(N-2)q}{2}}\omega_N
		\Big(\frac{(\frac{\rho}{\varepsilon})^{N+\alpha-(N-2)q}}{N+\alpha-(N-2)q}-\frac{1}{N+\alpha-(N-2)q}\notag\\
		&
		\quad\quad\quad\quad\quad-C\frac{(\frac{\rho}{\varepsilon})^{N+\alpha-(N-2)q-(2+\alpha)}}{N+\alpha-(N-2)q-(2+\alpha)}+C\frac{1}{N+\alpha-(N-2)q-(2+\alpha)}\Big)\notag\\
		&=C_{\alpha,N}^q\varepsilon^{N+\alpha-\frac{(N-2)q}{2}}\omega_N
		\Big(-\frac{1}{N+\alpha-(N-2)q}+C\frac{1}{N+\alpha-(N-2)q-(2+\alpha)}\Big)\notag\\
		&+C_{\alpha,N}^q\omega_N\frac{\rho^{N+\alpha-(N-2)q}}{N+\alpha-(N-2)q}\varepsilon^{\frac{(N-2)q}{2}}
		+o(\varepsilon^{\frac{(N-2)q}{2}})\notag\\
		&= \begin{cases}
		d_1\varepsilon^{N+\alpha-\frac{(N-2)q}{2}}+o(\varepsilon^{N+\alpha-\frac{(N-2)q}{2}}),~&\mathrm{if}~q>\frac{N+\alpha}{N-2},\notag\\[2mm]
		d_2\varepsilon^{\frac{(N-2)q}{2}}+o(\varepsilon^{\frac{(N-2)q}{2}}),~&\mathrm{if}~ q< \frac{N+\alpha}{N-2},\notag\\[2mm]
		\end{cases}\notag\\
		\end{align}
		which, together with $\hbox{I}_1\geq 0$, implies that $d_1\geq 0$ for $q>\frac{N+\alpha}{N-2}$ and $d_2\geq 0$ for $q<\frac{N+\alpha}{N-2}$.  Following from \eqref{721} and \eqref{731},  we can see that  when $q\neq \frac{N+\alpha}{N-2}$ and $\varepsilon>0$ is small enough, \eqref{eq34} is true.

		We complete the proof.

	\end{proof}

	\begin{lemma}\label{lemma32}
		If  $N \geq 5$, then we have, as $ \varepsilon \rightarrow 0^{+}$,
		\begin{equation}
		\int_{\Omega} U_{\varepsilon,\alpha}^{2} \log U_{\varepsilon,\alpha}^{2}= C_{0} \varepsilon^{2} \log \frac{1}{\varepsilon}+O\left(\varepsilon^{2}\right),
		\end{equation}
		where  $C_{0}$  is a positive constant.
	\end{lemma}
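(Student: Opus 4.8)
The plan is to pass to the rescaled variable $x=\varepsilon y$ and exploit the explicit form of $u_{\varepsilon,\alpha}$. Writing $w(y):=C_{\alpha,N}(1+|y|^{2+\alpha})^{-\frac{N-2}{2+\alpha}}$ for the limiting bubble, a direct computation gives $u_{\varepsilon,\alpha}(\varepsilon y)=\varepsilon^{-\frac{N-2}{2}}w(y)$, hence $U_{\varepsilon,\alpha}(\varepsilon y)=\varepsilon^{-\frac{N-2}{2}}\varphi(\varepsilon y)\,w(y)$ and, on the set $\{\varphi>0\}$,
\[
\log U_{\varepsilon,\alpha}^2(\varepsilon y)=(N-2)\log\frac{1}{\varepsilon}+\log w^2(y)+2\log\varphi(\varepsilon y).
\]
Using $dx=\varepsilon^N\,dy$ and $U_{\varepsilon,\alpha}^2\,dx=\varepsilon^2\varphi^2(\varepsilon y)w^2(y)\,dy$, this turns the integral into
\[
\int_\Omega U_{\varepsilon,\alpha}^2\log U_{\varepsilon,\alpha}^2=\varepsilon^2\int_{\R^N}\varphi^2(\varepsilon y)w^2(y)\Big[(N-2)\log\frac{1}{\varepsilon}+\log w^2(y)+2\log\varphi(\varepsilon y)\Big]dy,
\]
so that the stated expansion should follow by treating the three bracketed contributions separately.

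For the first contribution I would argue that, since $w(y)^2\sim|y|^{-2(N-2)}$ as $|y|\to\infty$ and $N\geq5$, the integral $\int_{\R^N}w^2$ is finite; moreover the cutoff error is controlled by the tail $\int_{|y|\geq\rho/\varepsilon}w^2=O(\varepsilon^{N-4})$, whence $\int_{\R^N}\varphi^2(\varepsilon y)w^2=\int_{\R^N}w^2+O(\varepsilon^{N-4})$. Multiplying by $(N-2)\varepsilon^2\log\frac{1}{\varepsilon}$ yields $C_0\varepsilon^2\log\frac{1}{\varepsilon}+O(\varepsilon^{N-2}\log\frac{1}{\varepsilon})$ with $C_0:=(N-2)\int_{\R^N}w^2>0$, and for $N\geq5$ the error is $o(\varepsilon^2)$. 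The second contribution $\varepsilon^2\int\varphi^2(\varepsilon y)w^2\log w^2$ is $O(\varepsilon^2)$, again because $w^2\log w^2\sim|y|^{-2(N-2)}\log|y|$ is integrable at infinity for $N\geq5$. The third contribution is supported where $\varphi<1$, i.e.\ $|y|\geq\rho/\varepsilon$; since $t\mapsto t^2\log t$ is bounded on $(0,1]$ the factor $\varphi^2\log\varphi$ is bounded and the same tail-mass estimate gives $O(\varepsilon^{N-2})=O(\varepsilon^2)$. Collecting the three pieces gives exactly $C_0\varepsilon^2\log\frac{1}{\varepsilon}+O(\varepsilon^2)$.

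The main obstacle is precisely the role of the dimension: every step above rests on the integrability of $w^2$ and $w^2\log w^2$ at infinity and on the smallness of the cutoff tails, and all three demand $N\geq5$. Indeed $\int^\infty r^{N-1}r^{-2(N-2)}\,dr$ converges only when $2(N-2)-(N-1)>1$, i.e.\ $N>4$, and the tail bound $\varepsilon^{N-2}=o(\varepsilon^2)$ likewise requires $N\geq5$; for $N=3,4$ these integrals diverge and the expansion changes shape, consistent with the different behavior of $\int_\Omega U_{\varepsilon,\alpha}^2$ recorded in \eqref{eq35}. Care is also needed to keep the remainder at the sharp order $O(\varepsilon^2)$ rather than merely $o(\varepsilon^2\log\frac{1}{\varepsilon})$; this is what forces the quantitative tail estimate $O(\varepsilon^{N-4})$ above, rather than the cruder bound $\int_\Omega U_{\varepsilon,\alpha}^2=C\varepsilon^2+o(\varepsilon^2)$ from Lemma \ref{lemma31}, and it is the one place where one must compute rather than merely invoke convergence.
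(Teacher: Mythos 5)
Your proof is correct and follows essentially the same route as the paper: both rescale $x=\varepsilon y$, split off the $(N-2)\log\frac{1}{\varepsilon}$ factor to extract the leading term $C_0\varepsilon^2\log\frac{1}{\varepsilon}$, and use the integrability of the squared bubble at infinity (which holds precisely when $N\geq5$) to keep all remainders at order $O(\varepsilon^2)$. The only difference is organizational: the paper first splits the domain into $B_\rho(0)$ and $\Omega\setminus B_\rho(0)$, rescales only the core, and controls the annulus and the $\log\varphi^2$ contribution via pointwise bounds such as $|s\log s|\leq C_1 s^{1-\delta}+C_2 s^{1+\delta}$, whereas you rescale globally and split the logarithm algebraically into three pieces, controlling the cutoff contributions by the tail-mass estimate $\int_{|y|\geq\rho/\varepsilon}w^2=O(\varepsilon^{N-4})$; both yield the same error orders.
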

	\begin{proof}

		It follows from \eqref{eq31} that
		
		$$\begin{array}{ll}
		\begin{split}
		\int_{\Omega} U_{\varepsilon,\alpha}^{2} \log U_{\varepsilon,\alpha}^{2}&=\int_{\Omega} \varphi^{2} u_{\varepsilon,\alpha}^{2} \log \varphi^{2}+\int_{\Omega} \varphi^{2} u_{\varepsilon,\alpha}^{2} \log u_{\varepsilon,\alpha}^{2} \\
		&=\int_{\Omega} \varphi^{2} u_{\varepsilon,\alpha}^{2} \log \varphi^{2}+\int_{B_{\rho}(0)} u_{\varepsilon,\alpha}^{2} \log u_{\varepsilon,\alpha}^{2}+\int_{\Omega\backslash B_{\rho}(0)} \varphi^{2} u_{\varepsilon,\alpha}^{2} \log u_{\varepsilon,\alpha}^{2}\\
		& \triangleq \hbox{I}_2+\mathrm{I}_{3}+\mathrm{I}_4.
		\end{split}
		\end{array}$$
		Since  $\left|s^{2} \log s^{2}\right| \leq C$  for  $0 \leq s \leq 1$, we have
		\begin{equation}
		\begin{split}
		|\mathrm{I}_2| &\leq C \int_{\Omega} u_{\varepsilon,\alpha}^{2}=O\left(\varepsilon^{2}\right) . \\
		\end{split}
		\end{equation}
		Due to the fact that  $|s \log s| \leq C_{1} s^{1-\delta}+C_{2} s^{1+\delta}$  for all  $s>0$, where  $0<C_{1}<C_{2}$  and  $0<\delta<\frac{1}{3}$  such that  $(N-2)(1-\delta) \geq 2$, we have that
		\begin{equation}
		\begin{split}
		|\hbox{I}_4| \leq \int_{\Omega \backslash B_{\rho}(0)}\left|u_{\varepsilon,\alpha}^{2} \log u_{\varepsilon,\alpha}^{2}\right|  \leq C \int_{\Omega \backslash B_{\rho}(0)}(u_{\varepsilon,\alpha}^{2(1-\delta)}+u_{\varepsilon,\alpha}^{2(1+\delta)})
		=O(\varepsilon^{2}),
		\end{split}
		\end{equation}
		where we have used that,
		\begin{equation}
		u_{\varepsilon,\alpha}^{2}=\frac{C^2_{\alpha,N}\varepsilon^{N-2}}{\left(\varepsilon^{2+\alpha}+|x|^{2+\alpha}\right)^\frac{2(N-2)}{2+\alpha}}
		\leq \frac{C\varepsilon^{N-2}}{|x|^{2(N-2)}}
		\leq \frac{C\varepsilon^{N-2}}{\rho^{2(N-2)}}
		=C\varepsilon^{N-2},~~|x|\geq \rho.
		\end{equation}
		Letting  $x=\varepsilon y$, we have that
		\begin{align*}
		u_{\varepsilon,\alpha}^{2}&=\frac{C^2_{\alpha,N}\varepsilon^{N-2}}{\left(\varepsilon^{2+\alpha}+|x|^{2+\alpha}\right)^\frac{2(N-2)}{2+\alpha}}
		=\frac{C^2_{\alpha,N}\varepsilon^{N-2}}{\left(\varepsilon^{2+\alpha}+\varepsilon^{2+\alpha}|y|^{2+\alpha}\right)^\frac{2(N-2)}{2+\alpha}}
		=\frac{C\varepsilon^{N-2}}{\varepsilon^{2(N-2)}\left(1+|y|^{2+\alpha}\right)^\frac{2(N-2)}{2+\alpha}},
		\end{align*}
		and
		\begin{align*}
		\mathrm{I}_3 & =\int _{B_{ \rho}(0)} u_{\varepsilon,\alpha}^{2} \log u_{\varepsilon,\alpha}^{2} \\
		& =\int_{B_{\frac{\rho}{\varepsilon}(0)}} \frac{C\varepsilon^{N-2}}{\varepsilon^{2(N-2)}\left(1+|y|^{2+\alpha}\right)^\frac{2(N-2)}{2+\alpha}} \log \frac{C\varepsilon^{N-2}}{\varepsilon^{2(N-2)}\left(1+|y|^{2+\alpha}\right)^\frac{2(N-2)}{2+\alpha}} \cdot \varepsilon^{N}\mathrm{d} y \\
		&=C\varepsilon^2\int _{B_{\frac{\rho}{\varepsilon}}(0)} \frac{1}{\left(1+|y|^{2+\alpha}\right)^\frac{2(N-2)}{2+\alpha}}\log \frac{C\varepsilon^{-(N-2)}}{\left(1+|y|^{2+\alpha}\right)^\frac{2(N-2)}{2+\alpha}}\mathrm{~d} y\\
		& =C \varepsilon^{2} \log \Big(\frac{1}{\varepsilon}\Big) \int_{B_{\frac{\rho}{\varepsilon}}(0)} \frac{1}{\left(1+|y|^{\alpha+2}\right)^{\frac{2(N-2)}{N+\alpha}}}\mathrm{~d} y\\
		&+C \varepsilon^{2} \int_{B_{\frac{\rho}{\varepsilon}}(0)} \frac{1}{\left(1+|y|^{\alpha+2}\right)^{\frac{2(N-2)}{N+\alpha}}} \log \frac{C}{\left(1+|y|^{\alpha+2}\right)^{\frac{2(N-2)}{N+\alpha}}}\mathrm{~d} y \\
		& =C \varepsilon^{2} \log \Big(\frac{1}{\varepsilon}\Big)+O\left(\varepsilon^{2}\right).
		\end{align*}
		Thus,
		\begin{equation*}
		\int_{\Omega} U_{\varepsilon,\alpha}^{2} \log U_{\varepsilon,\alpha}^{2}= C_{0} \varepsilon^{2} \log \frac{1}{\varepsilon}+O\left(\varepsilon^{2}\right).
		\end{equation*}
	\end{proof}

	\begin{lemma}\label{lemma33}
		If $N\geq5,~\lambda\in\R$ and $\mu>0$, then $c_M<\min\{0, \tilde{C}_\kappa\}+\frac{\alpha+2}{2(N+\alpha)}S_\alpha^\frac{N+\alpha}{\alpha+2}$.
	\end{lemma}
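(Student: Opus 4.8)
The strategy is to reduce the claim to a classical Br\'ezis--Nirenberg type estimate and to show that, when $\mu>0$, the logarithmic term by itself produces the required energy gain, independently of the sign of $\lambda$. First I would simplify the right-hand side. By the identity \eqref{eq217}, every $u\in\kappa$ satisfies $I(u)=\frac{\alpha+2}{2(N+\alpha)}\int|x|^\alpha|u_+|^{2^*_{\alpha}}+\frac{\mu}{2}\int u_+^2\ge0$, and since $0\in\kappa$ with $I(0)=0$ we get $\tilde{C}_{\kappa}=0$; hence $\min\{0,\tilde{C}_{\kappa}\}=0$ and it suffices to prove $c_M<\frac{\alpha+2}{2(N+\alpha)}S_\alpha^{\frac{N+\alpha}{\alpha+2}}$.

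Next I would bound $c_M$ by a single path. Choosing $T>0$ so large that $I(TU_{\varepsilon,\alpha})<0$ and using the admissible path $t\mapsto tTU_{\varepsilon,\alpha}$ issuing from the origin, a reparametrization yields $c_M\le\sup_{t\ge0}I(tU_{\varepsilon,\alpha})$. Writing $g_\varepsilon(t):=I(tU_{\varepsilon,\alpha})$ and using $(tU_{\varepsilon,\alpha})_+=tU_{\varepsilon,\alpha}$, I split $g_\varepsilon=\psi_\varepsilon+\phi_\varepsilon$, where $\psi_\varepsilon(t)=\frac{t^2}{2}\|U_{\varepsilon,\alpha}\|^2-\frac{t^{2^*_{\alpha}}}{2^*_{\alpha}}\int|x|^\alpha U_{\varepsilon,\alpha}^{2^*_{\alpha}}$ is the pure Sobolev part and $\phi_\varepsilon(t)=-\frac{\lambda}{2}t^2\int U_{\varepsilon,\alpha}^2-\mu t^2\log t\int U_{\varepsilon,\alpha}^2-\frac{\mu}{2}t^2\int U_{\varepsilon,\alpha}^2\log U_{\varepsilon,\alpha}^2+\frac{\mu}{2}t^2\int U_{\varepsilon,\alpha}^2$ collects the lower-order contributions. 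By \eqref{eq32} and \eqref{eq33} one has $\max_{t\ge0}\psi_\varepsilon(t)=\frac{\alpha+2}{2(N+\alpha)}S_\alpha^{\frac{N+\alpha}{\alpha+2}}+O(\varepsilon^{N-2})$, with unconstrained maximizer tending to $1$ as $\varepsilon\to0^+$.

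The key step is to localize the maximizer of the full function $g_\varepsilon$. Since $\psi_\varepsilon$ has a nondegenerate maximum near $t=1$ while $\phi_\varepsilon$ and $\phi_\varepsilon'$ are uniformly $O(\varepsilon^2\log\frac1\varepsilon)$ on compact subsets of $(0,\infty)$, I would show that for $\varepsilon$ small the supremum of $g_\varepsilon$ is attained at some $t_\varepsilon$ lying in a fixed compact interval $[t_1,t_2]\subset(0,\infty)$. On such an interval $\log t$ is bounded, so by \eqref{eq35} (for $N\ge5$ one has $\int U_{\varepsilon,\alpha}^2=O(\varepsilon^2)$) all the terms of $\phi_\varepsilon$ containing $\int U_{\varepsilon,\alpha}^2$ are $O(\varepsilon^2)$, whereas Lemma \ref{lemma32} gives $\int U_{\varepsilon,\alpha}^2\log U_{\varepsilon,\alpha}^2=C_0\varepsilon^2\log\frac1\varepsilon+O(\varepsilon^2)$ with $C_0>0$, so that $-\frac{\mu}{2}t^2\int U_{\varepsilon,\alpha}^2\log U_{\varepsilon,\alpha}^2=-\frac{\mu C_0}{2}t^2\varepsilon^2\log\frac1\varepsilon+O(\varepsilon^2)$.

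Putting these together, for $t_\varepsilon\in[t_1,t_2]$ I obtain $g_\varepsilon(t_\varepsilon)\le\frac{\alpha+2}{2(N+\alpha)}S_\alpha^{\frac{N+\alpha}{\alpha+2}}-\frac{\mu C_0}{2}t_1^2\varepsilon^2\log\frac1\varepsilon+O(\varepsilon^2)+O(\varepsilon^{N-2})$. Because $\mu>0$ and $C_0>0$, the term of order $\varepsilon^2\log\frac1\varepsilon$ is strictly negative and, for $N\ge5$ (so that $N-2\ge3>2$), dominates both error terms; hence for $\varepsilon$ small enough $\sup_{t\ge0}g_\varepsilon(t)<\frac{\alpha+2}{2(N+\alpha)}S_\alpha^{\frac{N+\alpha}{\alpha+2}}$, which is the assertion. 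The main obstacle is precisely the localization of $t_\varepsilon$: the factor $\log t$ makes $g_\varepsilon'(t)=0$ transcendental and unsolvable in closed form, so the confinement of $t_\varepsilon$ away from $0$ and $\infty$ must be deduced from the dominance of the Sobolev part and the uniform smallness of $\phi_\varepsilon$ on compacta. It is worth stressing that the sign of $\lambda$ plays no role here, since its contribution is only $O(\varepsilon^2)$; the decisive gain comes entirely from the logarithmic nonlinearity.
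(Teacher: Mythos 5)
Your proposal is correct and follows essentially the same route as the paper: reduce to $\min\{0,\tilde C_\kappa\}=0$, bound $c_M$ by $\sup_{t\ge 0}I(tU_{\varepsilon,\alpha})$, confine the maximizing parameter $t_\varepsilon$ to a fixed compact subset of $(0,+\infty)$, and extract the strict gain from $-\frac{\mu}{2}t_\varepsilon^2\int U_{\varepsilon,\alpha}^2\log U_{\varepsilon,\alpha}^2\le -C\mu\,\varepsilon^2\log\frac{1}{\varepsilon}$ via Lemma \ref{lemma32} together with $\int U_{\varepsilon,\alpha}^2=O(\varepsilon^2)$ for $N\ge 5$. The only (inessential) difference is how $t_\varepsilon$ is localized: the paper derives $0<c_2\le t_\varepsilon\le c_1$ from the Euler identity $\langle I'(t_\varepsilon U_{\varepsilon,\alpha}),t_\varepsilon U_{\varepsilon,\alpha}\rangle=0$ at the maximizer, whereas you argue via the nondegenerate maximum of the Sobolev part and the uniform smallness of the lower-order terms on compacta; both arguments yield the same confinement.
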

	\begin{proof}When $\mu>0$, $\tilde{C}_{\kappa}\geq 0.$ That is, $\min\{0, \tilde{C}_\kappa\}=0$. Therefore, we only need to show that
		$c_M<\frac{\alpha+2}{2(N+\alpha)}S_\alpha^\frac{N+\alpha}{\alpha+2}$.

		Let $g(t)\triangleq I(tU_{\varepsilon,\alpha})$.~By Lemma \ref{lemma22}, $g(0)=0$ and $\lim\limits_{t\rightarrow+\infty}g(t)=-\infty$, we can find $t_\varepsilon\in(0,+\infty)$ such that
		$$\sup\limits_{t\geq0}I(tU_{\varepsilon,\alpha})=\sup\limits_{t\geq0}g(t)=g(t_\varepsilon)=I(t_\varepsilon U_{\varepsilon,\alpha}).$$
		So
		$\langle I'(t_\varepsilon U_{\varepsilon,\alpha}),t_\varepsilon U_{\varepsilon,\alpha}\rangle=0.$ That is,
		\begin{equation}\label{eq313}
		t_\varepsilon ^2\int |\nabla U_{\varepsilon,\alpha}|^2-t_\varepsilon ^{2^*_{\alpha}}\int |x|^\alpha|U_{\varepsilon,\alpha}|^{2^*_{\alpha}}-\lambda t_\varepsilon ^2\int U_{\varepsilon,\alpha}^2-\mu t_\varepsilon ^2\int U_{\varepsilon,\alpha}^2\log t_\varepsilon^2U_{\varepsilon,\alpha}^2=0,
		\end{equation}
		which implies that, as $\varepsilon\rightarrow0^+$,
		\begin{align*}
		2S_\alpha^\frac{N+\alpha}{2+\alpha}&\geq \int |\nabla U_{\varepsilon,\alpha}|^2-\lambda\int U_{\varepsilon,\alpha}^2-\mu\int U_{\varepsilon,\alpha}^2\log U_{\varepsilon,\alpha}^2\\
		&=t_\varepsilon^{2^*_{\alpha}-2}\int |x|^\alpha|U_{\varepsilon,\alpha}|^{2^*_{\alpha}}+\mu\int U_{\varepsilon,\alpha}^2\log t_\varepsilon^2\\
		&\geq t_\varepsilon^{2^*_{\alpha}-2}\left(\frac{1}{2}S_\alpha^\frac{N+\alpha}{2+\alpha}\right)-C|\log t_\varepsilon^2|.
		\end{align*}
		So there exists $c_1>0$ such that $t_\varepsilon<c_1$.
		
		On the other hand, as $\varepsilon\rightarrow0^+$,
		\begin{align*}
		\frac{1}{2}S_\alpha^\frac{N+\alpha}{2+\alpha}&\leq \int |\nabla U_{\varepsilon,\alpha}|^2-\lambda\int U_{\varepsilon,\alpha}^2-\mu\int U_{\varepsilon,\alpha}^2\log U_{\varepsilon,\alpha}^2\\
		&=t_\varepsilon^{2^*_{\alpha}-2}\int |x|^\alpha|U_{\varepsilon,\alpha}|^{2^*_{\alpha}}+\mu\int U_{\varepsilon,\alpha}^2\log t_\varepsilon^2\\
		&\leq 2S_\alpha^\frac{N+\alpha}{2+\alpha}t_\varepsilon^{2^*_{\alpha}-2}+Ct_\varepsilon^{2^*_{\alpha}-2}.
		\end{align*}
		Thus there exists $c_2>0$ such that $t_\varepsilon>c_2$.
		
		Therefore, combining with the definition of $c_M$, we have that, as $\varepsilon\rightarrow0^+$,
		\begin{align*}
		c_M&\leq \sup\limits_{t\geq0}I(tU_{\varepsilon,\alpha})\\
		&= \frac{t_\varepsilon^2}{2}\int|\nabla U_{\varepsilon,\alpha}|^2-\frac{t_\varepsilon^{2^*_{\alpha}}}{2^*_{\alpha}}\int|x|^\alpha|U_{\varepsilon,\alpha}|^{2^*_{\alpha}}-\frac{\lambda}{2}t_\varepsilon^2\int U_{\varepsilon,\alpha}^2-\frac{\mu}{2}t_\varepsilon^2\int U_{\varepsilon,\alpha}^2(\log (t_\varepsilon^2U_{\varepsilon,\alpha}^2)-1)\\
		&\leq \frac{t_\varepsilon^2}{2}S_\alpha^\frac{N+\alpha}{2+\alpha}-\frac{t_\varepsilon^{2^*_{\alpha}}}{2^*_{\alpha}}S_\alpha^\frac{N+\alpha}{2+\alpha}+O(\varepsilon^2)+\frac{\mu}{2}t_\varepsilon^2\left(1-\log t_\varepsilon^2\right)\int U_{\varepsilon,\alpha}^2-\frac{\mu}{2}t_\varepsilon^2\int U_{\varepsilon,\alpha}^2\log U_{\varepsilon,\alpha}^2\\
		&\leq \frac{\alpha+2}{2(N+\alpha)}S_\alpha^\frac{N+\alpha}{\alpha+2}-C
		\mu\varepsilon^{2} \log \frac{1}{\varepsilon}+O(\varepsilon^2)\\
		&<\frac{\alpha+2}{2(N+\alpha)}S_\alpha^\frac{N+\alpha}{\alpha+2}.
		\end{align*}
	\end{proof}

	\begin{lemma}\label{lm3.4}
		If $N=4$, then we have, as $\varepsilon\rightarrow0^+$,
		\begin{equation*}
		\int_\Omega U_{\varepsilon,\alpha}^2\log U_{\varepsilon,\alpha}^2\geq C^2_{\alpha,4}\omega_4\log\Big(\frac{C^2_{\alpha,4}\rho^2e^{-\frac{8}{(2+\alpha)^2}}}{(2\rho)^4}\Big)\varepsilon^2\log\frac{1}{\varepsilon}+O(\varepsilon^2),
		\end{equation*}
		where $\omega_4$ denotes the area of the unit sphere surface in $\R^4$.
	\end{lemma}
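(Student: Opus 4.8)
The plan is to reduce the whole estimate to a single integral of $u_{\varepsilon,\alpha}^2\log u_{\varepsilon,\alpha}^2$ over the ball $B_{2\rho}(0)$, and then to perform the rescaling $x=\varepsilon y$ very carefully, because for $N=4$ the exponent $\frac{2(N-2)}{2+\alpha}=\frac{4}{2+\alpha}$ places the relevant radial integral $\int \frac{r^{3}}{(1+r^{2+\alpha})^{4/(2+\alpha)}}\,dr$ exactly on the borderline of divergence. This borderline behaviour is what produces the extra factor $\log\frac1\varepsilon$ (compare the $L^2$-estimate \eqref{eq35}), and it is precisely the reason $N=4$ cannot be handled by the clean asymptotics of Lemma \ref{lemma32}. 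First, using $U_{\varepsilon,\alpha}=\varphi u_{\varepsilon,\alpha}$ as in \eqref{eq31}, I would write $\int_\Omega U_{\varepsilon,\alpha}^2\log U_{\varepsilon,\alpha}^2=\int_\Omega \varphi^2 u_{\varepsilon,\alpha}^2\log\varphi^2+\int_{B_\rho(0)}u_{\varepsilon,\alpha}^2\log u_{\varepsilon,\alpha}^2+\int_{A}\varphi^2 u_{\varepsilon,\alpha}^2\log u_{\varepsilon,\alpha}^2$, where $A:=B_{2\rho}(0)\setminus B_\rho(0)$. On $A$ one has $u_{\varepsilon,\alpha}^2\le C_{\alpha,4}^2\varepsilon^2/\rho^4<1$ for small $\varepsilon$, hence $\log u_{\varepsilon,\alpha}^2<0$; since $0\le\varphi\le1$ this gives $\varphi^2 u_{\varepsilon,\alpha}^2\log u_{\varepsilon,\alpha}^2\ge u_{\varepsilon,\alpha}^2\log u_{\varepsilon,\alpha}^2$, which is exactly the source of the inequality (rather than an equality) in the statement. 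Combining this with $|\varphi^2\log\varphi^2|\le e^{-1}$ and $\int_A u_{\varepsilon,\alpha}^2=O(\varepsilon^2)$ for the first term, the problem reduces to a lower bound for $\int_{B_{2\rho}(0)}u_{\varepsilon,\alpha}^2\log u_{\varepsilon,\alpha}^2$ up to an $O(\varepsilon^2)$ error.

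Next I would split the logarithm as $\log u_{\varepsilon,\alpha}^2=\log(C_{\alpha,4}^2\varepsilon^2)-\frac{4}{2+\alpha}\log(\varepsilon^{2+\alpha}+|x|^{2+\alpha})$ and treat the two resulting pieces. The factor $\log(C_{\alpha,4}^2\varepsilon^2)=\log C_{\alpha,4}^2-2\log\frac1\varepsilon$ multiplies $\int_{B_{2\rho}(0)}u_{\varepsilon,\alpha}^2=C_{\alpha,4}^2\omega_4\varepsilon^2\log\frac1\varepsilon+O(\varepsilon^2)$ (the $N=4$ case of \eqref{eq35}), producing a contribution of order $\varepsilon^2(\log\frac1\varepsilon)^2$. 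After the rescaling $x=\varepsilon y$, the second piece becomes $C_{\alpha,4}^2\varepsilon^2\int_{B_{2\rho/\varepsilon}(0)}\frac{(2+\alpha)\log\varepsilon+\log(1+|y|^{2+\alpha})}{(1+|y|^{2+\alpha})^{4/(2+\alpha)}}\,dy$, whose leading order is again $\varepsilon^2(\log\frac1\varepsilon)^2$; the decisive point is that, after multiplication by $-\frac{4}{2+\alpha}$, this $\varepsilon^2(\log\frac1\varepsilon)^2$ term cancels exactly against the one from the first piece, leaving a genuine $\varepsilon^2\log\frac1\varepsilon$ leading term. Here one uses that $\int_{B_R}(1+|y|^{2+\alpha})^{-4/(2+\alpha)}\,dy=\omega_4\log R+O(1)$ and $\int_{B_R}(1+|y|^{2+\alpha})^{-4/(2+\alpha)}\log(1+|y|^{2+\alpha})\,dy=\tfrac{2+\alpha}{2}\omega_4(\log R)^2+O(\log R)$ with $R=2\rho/\varepsilon$.

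To finish, I would extract the surviving coefficient of $\varepsilon^2\log\frac1\varepsilon$, which equals $C_{\alpha,4}^2\omega_4$ times a finite quantity assembled from $\log C_{\alpha,4}^2$, $\log(2\rho)$ and the finite parts of the borderline-divergent profile integrals above; since only a lower bound is needed, I would estimate these finite parts to replace them by an explicit clean constant, which is how the factors $e^{-8/(2+\alpha)^2}$ and $(2\rho)^4$ are produced and collected into $\log\!\big(C_{\alpha,4}^2\rho^2 e^{-8/(2+\alpha)^2}/(2\rho)^4\big)$, all lower-order contributions being absorbed into $O(\varepsilon^2)$. The main obstacle is precisely this exact cancellation of the $\varepsilon^2(\log\frac1\varepsilon)^2$ terms together with the bookkeeping of the finite parts of the logarithmically divergent integrals: unlike the $N\ge5$ situation of Lemma \ref{lemma32}, one cannot replace $\log u_{\varepsilon,\alpha}^2$ by a constant on $B_{2\rho}(0)$, since doing so would break the cancellation and leave a spurious $\varepsilon^2(\log\frac1\varepsilon)^2$ term; the full $|x|$-dependence inside the logarithm must therefore be retained throughout and only estimated at the very last step to obtain the stated explicit constant.
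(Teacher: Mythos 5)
Your proposal is correct and follows essentially the same route as the paper's proof: the same splitting of $\log U_{\varepsilon,\alpha}^2$ into its constant part, its $\log\frac{1}{\varepsilon}$ part and its profile part $\log(1+|y|^{2+\alpha})^{-4/(2+\alpha)}$, the same rescaling $x=\varepsilon y$, the same exact cancellation of the two $\varepsilon^2(\log\frac{1}{\varepsilon})^2$ contributions, and the same final step of explicit one-sided estimates of the finite parts of the borderline logarithmically divergent integrals (which the paper carries out via Lagrange's mean value theorem to produce the factors $e^{-8/(2+\alpha)^2}$ and $(2\rho)^4$). The only cosmetic difference is that you remove the cutoff $\varphi$ at the outset by a sign argument on the annulus $B_{2\rho}\setminus B_\rho$, whereas the paper retains $\varphi$ and applies the corresponding sign arguments inside the terms $\mathrm{II}_1$ and $\mathrm{II}_2$.
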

	\begin{proof}
		
		Following from the definition of $u_{\varepsilon,\alpha}$, we obtain that
		$u_{\varepsilon,\alpha}=\frac{C_{\alpha,4}\varepsilon}{(\varepsilon^{2+\alpha}+|x|^{2+\alpha})^\frac{2}{2+\alpha}},$
		and
		\begin{align}\label{ineq1}
		\int_\Omega U_{\varepsilon,\alpha}^2\log U_{\varepsilon,\alpha}^2
		&=C^2_{\alpha,4}\int_{B_2\rho(0)} \frac{\varepsilon^2}{(\varepsilon^{2+\alpha}+|x|^{2+\alpha})^\frac{4}{2+\alpha}}\varphi^2\log \frac{\varepsilon^2}{(\varepsilon^{2+\alpha}+|x|^{2+\alpha})^\frac{4}{2+\alpha}}\notag\\
		&+C^2_{\alpha,4} \log C^2_{\alpha,4}\int_{B_\rho(0)} \frac{\varepsilon^2}{(\varepsilon^{2+\alpha}+|x|^{2+\alpha})^\frac{4}{2+\alpha}}\notag\\
		&+C^2_{\alpha,4}\int_{\Omega\backslash B_\rho(0)}\frac{\varepsilon^2}{(\varepsilon^{2+\alpha}+|x|^{2+\alpha})^\frac{4}{2+\alpha}}\varphi^2\log \varphi^2+O(\varepsilon^2) \notag\\
		&=\varepsilon^2C^2_{\alpha,4}\int_{B_\frac{2\rho}{\varepsilon}(0)}\frac{1}{(1+|x|^{2+\alpha})^\frac{4}{2+\alpha}}\varphi^2(\varepsilon x)\log \frac{1}{\varepsilon^2(1+|x|^{2+\alpha})^\frac{4}{2+\alpha}}\notag\\
		&+\varepsilon^2C^2_{\alpha,4} \log C^2_{\alpha,4}\int_{B_\frac{\rho}{\varepsilon}(0)}\frac{1}{(1+|x|^{2+\alpha})^\frac{4}{2+\alpha}}+O(\varepsilon^2)\notag\\
		&=2\varepsilon^2C^2_{\alpha,4}\log\frac{1}{\varepsilon}\int_{B_\frac{2\rho}{\varepsilon}(0)}\frac{1}{(1+|x|^{2+\alpha})^\frac{4}{2+\alpha}}\varphi^2(\varepsilon x)\notag\\
		&+\varepsilon^2C^2_{\alpha,4}\int_{B_\frac{2\rho}{\varepsilon}(0)}\frac{1}{(1+|x|^{2+\alpha})^\frac{4}{2+\alpha}}\varphi^2(\varepsilon x)\log\frac{1}{(1+|x|^{2+\alpha})^\frac{4}{2+\alpha}}\notag\\
		&+\varepsilon^2C^2_{\alpha,4} \log C^2_{\alpha,4}\int_{B_\frac{\rho}{\varepsilon}(0)}\frac{1}{(1+|x|^{2+\alpha})^\frac{4}{2+\alpha}}+O(\varepsilon^2)\notag\\
		&\triangleq\hbox{II}_{1}+\hbox{II}_{2}+\hbox{II}_3+O(\varepsilon^2).
		\end{align}
		By \eqref{in6251} and  direct computations, we have that
		\begin{align}\label{eq123}
		\hbox{II}_3
		&=C^2_{\alpha,4} \log C^2_{\alpha,4}\omega_4\varepsilon^2\int_0^{\frac{\rho}{\varepsilon}}\frac{r^3}{(1+r^{2+\alpha})^\frac{4}{2+\alpha}}\mathrm{~d}r\notag\\
		&=C^2_{\alpha,4} \log C^2_{\alpha,4}\omega_4\varepsilon^2\int_0^{1}\frac{r^3}{(1+r^{2+\alpha})^\frac{4}{2+\alpha}}\mathrm{~d}r+C^2_{\alpha,4} \log C^2_{\alpha,4}\omega_4\varepsilon^2\int_{1}^{\frac{\rho}{\varepsilon}}\frac{r^3}{(1+r^{2+\alpha})^\frac{4}{2+\alpha}}\mathrm{~d}r\notag\\
		&= C^2_{\alpha,4} \log C^2_{\alpha,4}\omega_4\varepsilon^2\int_{1}^{\frac{\rho}{\varepsilon}}r^{-1}
		\Big(1-\frac{4}{2+\alpha}(1+\xi)^{-\frac{4}{2+\alpha}-1}\frac{1}{r^{2+\alpha}}\Big)\mathrm{~d}r+O(\varepsilon^2)\notag\\
		&=C^2_{\alpha,4}\log C^2_{\alpha,4}\omega_4\varepsilon^2\log\frac{\rho}{\varepsilon}+O(\varepsilon^2)\notag\\
		&=C^2_{\alpha,4}\log C^2_{\alpha,4}\omega_4\varepsilon^2\log\frac{1}{\varepsilon}+O(\varepsilon^2),
		\end{align}
		
		and
		\begin{align}\label{ineq4}
		\hbox{II}_{1}
		&\geq2\varepsilon^2C^2_{\alpha,4}\log\frac{1}{\varepsilon}\omega_4\int_1^{\frac{\rho}{\varepsilon}}\frac{r^3}{(1+r^{2+\alpha})^\frac{4}{2+\alpha}}~~\mathrm{d}r\notag\\
		&=2\varepsilon^2C^2_{\alpha,4}\log\frac{1}{\varepsilon}\omega_4\Big(\log r+\frac{4}{(2+\alpha)^2}(1+\xi)^{-\frac{4}{2+\alpha}-1}\frac{1}{r^{2+\alpha}}\Big)
		\Big|_1^{\frac{\rho}{\varepsilon}}\notag\\
		&=2\varepsilon^2C^2_{\alpha,4}\omega_4(\log \frac{1}{\varepsilon})^2+2\varepsilon^2C^2_{\alpha,4}\omega_4\log \frac{1}{\varepsilon}\Big(\log\Big(\rho e^{-\frac{4}{(2+\alpha)^2}(1+\xi)^{-\frac{4}{2+\alpha}-1}}\Big)\Big)+O(\varepsilon^{4+\alpha}\log\frac{1}{\varepsilon})\notag\\
		&\geq 2\varepsilon^2C^2_{\alpha,4}\omega_4(\log \frac{1}{\varepsilon})^2+2\varepsilon^2C^2_{\alpha,4}\omega_4\log \frac{1}{\varepsilon}\Big(\log\Big(\rho e^{-\frac{4}{(2+\alpha)^2}}\Big)\Big)+O(\varepsilon^{4+\alpha}\log\frac{1}{\varepsilon}),
		\end{align}
		where $ \xi \in (0,1)$.

		If $r\geq r_0\geq 1$ with  $|\frac{4}{2+\alpha}\frac{1}{r_0^{2+\alpha}}|<\frac{1}{2}$, then,   by the Lagrange's Mean Value Theorem,  we have that
		\begin{equation*}
		\begin{split}
		&r^3(1+r^{2+\alpha})^{-\frac{4}{2+\alpha}}\log\Big((1+r^{2+\alpha})^{-\frac{4}{2+\alpha}}\Big)\\
		&=r^{-1}\Big(1-\frac{4}{2+\alpha}(1+\xi)^{-\frac{4}{2+\alpha}-1}\frac{1}{r^{2+\alpha}}\Big)\log\Big(r^{-4}-\frac{4}{2+\alpha}(1+\xi)^{-\frac{4}{2+\alpha}-1}\frac{1}{r^{4+(2+\alpha)}}\Big)\\
		&=r^{-1}\Big(1-\frac{4}{2+\alpha}(1+\xi)^{-\frac{4}{2+\alpha}-1}\frac{1}{r^{2+\alpha}}\Big)\Big(-4\log r+\log\Big(1-\frac{4}{2+\alpha}(1+\xi)^{-\frac{4}{2+\alpha}-1}\frac{1}{r^{2+\alpha}}\Big)\Big)\\
		&\geq r^{-1}\Big(1-\frac{4}{2+\alpha}(1+\xi)^{-\frac{4}{2+\alpha}-1}\frac{1}{r^{2+\alpha}}\Big)\Big(-4\log r-C\frac{1}{r^{2+\alpha}}\Big)\Big)\\
		&\geq -4r^{-1}\log r-C\frac{1}{r^{3+\alpha}},
		\end{split}
		\end{equation*}
		where $\xi  \in(0,1), C>\frac{4}{2+\alpha} $,  and which implies that
		\begin{align}\label{ineq5}
		\hbox{II}_{2}
		&\geq\varepsilon^2C^2_{\alpha,4}\int_{B_\frac{2\rho}{\varepsilon}(0)}\frac{1}{(1+|x|^{2+\alpha})^\frac{4}{2+\alpha}}\log\frac{1}{(1+|x|^{2+\alpha})^\frac{4}{2+\alpha}}\notag\\
		&=\varepsilon^2C^2_{\alpha,4}\omega_4\int_{r_0}^{\frac{2\rho}{\varepsilon}}\frac{r^3}
		{(1+r^{2+\alpha})^\frac{4}{2+\alpha}}\log\frac{1}{(1+r^{2+\alpha})^\frac{4}{2+\alpha}}~~\mathrm{d}r+ O(\varepsilon^2)\notag\\
		&\geq\varepsilon^2C^2_{\alpha,4}\omega_4\int_{r_0}^{\frac{2\rho}{\varepsilon}}\Big(-4r^{-1}\log r-C\frac{1}{r^{1+(2+\alpha)}}\Big)~~\mathrm{d}r +O(\varepsilon^2)\notag\\
		&=\varepsilon^2C^2_{\alpha,4}\omega_4\Big(-2(\log r)^2-C\frac{1}{r^{2+\alpha}}\Big)\Big|_{r_0}^{\frac{2\rho}{\varepsilon}}+O(\varepsilon^2)\notag\\
		&=-2C^2_{\alpha,4}\omega_4\varepsilon^2\Big(\log\frac{2\rho}{\va}\Big)^2+ O(\varepsilon^2)\notag\\
		&=-2C^2_{\alpha,4}\omega_4\varepsilon^2\Big(\log\frac{1}{\varepsilon}\Big)^2-4C^2_{\alpha,4}\omega_4\log(2\rho)\varepsilon^2\log\frac{1}{\varepsilon}+O(\varepsilon^2).
		\end{align}
		Following from \eqref{ineq1}--\eqref{ineq5},  we have that
		\begin{equation*}
		\int_\Omega U_{\varepsilon,\alpha}^2\log U_{\varepsilon,\alpha}^2\geq C^2_{\alpha,4}\omega_4\log\Big(\frac{C^2_{\alpha,4}\rho^2e^{-\frac{8}{(2+\alpha)^2}}}{(2\rho)^4}\Big)\varepsilon^2\log\frac{1}{\varepsilon}+O(\varepsilon^2).
		\end{equation*}
	\end{proof}

	\begin{lemma}\label{lemma35}
		Assume that $N=4$. If $\lambda\in\R$ and $\mu>0$, then $c_M<\min\{0, \tilde{C}_\kappa\}+\frac{\alpha+2}{2(N+\alpha)}S_\alpha^\frac{N+\alpha}{\alpha+2}$.
	\end{lemma}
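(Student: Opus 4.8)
The plan is to follow the scheme of Lemma~\ref{lemma33}, the crucial difference being that for $N=4$ the mass $\int U_{\varepsilon,\alpha}^2$ is of order $\varepsilon^2\log\frac{1}{\varepsilon}$ rather than $\varepsilon^2$ (compare \eqref{eq35} with the $N\geq5$ case), so it now enters the leading correction on the same footing as the logarithmic term and must be tracked carefully. Since $\mu>0$ we again have $\tilde{C}_\kappa\geq0$, hence $\min\{0,\tilde{C}_\kappa\}=0$ and it suffices to prove $c_M<\frac{\alpha+2}{2(N+\alpha)}S_\alpha^{\frac{N+\alpha}{\alpha+2}}$. Setting $g(t):=I(tU_{\varepsilon,\alpha})$, the Mountain pass geometry of Lemma~\ref{lemma22} together with $g(0)=0$ and $g(t)\to-\infty$ furnishes a maximizer $t_\varepsilon\in(0,\infty)$ satisfying $\langle I'(t_\varepsilon U_{\varepsilon,\alpha}),t_\varepsilon U_{\varepsilon,\alpha}\rangle=0$, i.e.\ \eqref{eq313}. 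Exactly as in Lemma~\ref{lemma33} one first obtains $c_2<t_\varepsilon<c_1$ for positive constants $c_1,c_2$; dividing \eqref{eq313} by $t_\varepsilon^2$ and passing to the limit, using \eqref{eq32}, \eqref{eq33} and $\int U_{\varepsilon,\alpha}^2\log U_{\varepsilon,\alpha}^2=O(\varepsilon^2\log\frac1\varepsilon)$, refines this to $t_\varepsilon\to1$, so that $\log t_\varepsilon^2\to0$.

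Next I would estimate $c_M\leq\sup_{t\geq0}I(tU_{\varepsilon,\alpha})=I(t_\varepsilon U_{\varepsilon,\alpha})$ and split the energy into a Sobolev part and a lower-order part. By \eqref{eq32} and \eqref{eq33} the Sobolev part satisfies
\[
\frac{t_\varepsilon^2}{2}\int|\nabla U_{\varepsilon,\alpha}|^2-\frac{t_\varepsilon^{2^*_\alpha}}{2^*_\alpha}\int|x|^\alpha|U_{\varepsilon,\alpha}|^{2^*_\alpha}\leq\Big(\frac{t_\varepsilon^2}{2}-\frac{t_\varepsilon^{2^*_\alpha}}{2^*_\alpha}\Big)S_\alpha^{\frac{N+\alpha}{2+\alpha}}+O(\varepsilon^2)\leq\frac{\alpha+2}{2(N+\alpha)}S_\alpha^{\frac{N+\alpha}{\alpha+2}}+O(\varepsilon^2),
\]
because $t\mapsto\frac{t^2}{2}-\frac{t^{2^*_\alpha}}{2^*_\alpha}$ attains its maximum $\frac{\alpha+2}{2(N+\alpha)}$ at $t=1$. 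Expanding $\log(t_\varepsilon^2U_{\varepsilon,\alpha}^2)=\log t_\varepsilon^2+\log U_{\varepsilon,\alpha}^2$, the remaining terms collect into
\[
\frac{t_\varepsilon^2}{2}\big(\mu-\lambda-\mu\log t_\varepsilon^2\big)\int U_{\varepsilon,\alpha}^2-\frac{\mu}{2}t_\varepsilon^2\int U_{\varepsilon,\alpha}^2\log U_{\varepsilon,\alpha}^2 .
\]

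Finally I would insert the sharp asymptotics into this lower-order part. Writing $K:=C_{\alpha,4}^2\omega_4>0$, \eqref{eq35} gives $\int U_{\varepsilon,\alpha}^2=K\varepsilon^2\log\frac1\varepsilon+O(\varepsilon^2)$, while Lemma~\ref{lm3.4} provides the lower bound $\int U_{\varepsilon,\alpha}^2\log U_{\varepsilon,\alpha}^2\geq KL\,\varepsilon^2\log\frac1\varepsilon+O(\varepsilon^2)$ with $L:=\log\big(\frac{C_{\alpha,4}^2\rho^2e^{-\frac{8}{(2+\alpha)^2}}}{(2\rho)^4}\big)$; since $\mu>0$, multiplying this bound by $-\frac{\mu}{2}t_\varepsilon^2$ preserves the correct direction. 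Using $t_\varepsilon\to1$ (so $\log t_\varepsilon^2\to0$), the lower-order part is thus bounded above by $\frac{K}{2}\big(\mu(1-L)-\lambda\big)\varepsilon^2\log\frac1\varepsilon+O(\varepsilon^2)$. The main obstacle is to make this leading coefficient strictly negative: the bracket $\mu(1-L)-\lambda$ need not be negative for an arbitrary cutoff radius, but $L=\mathrm{const}-2\log\rho\to+\infty$ as $\rho\to0^+$, so I would fix $\rho\in(0,1]$ small enough that $L>1-\frac{\lambda}{\mu}$, which forces $\mu(1-L)-\lambda<0$. Combining the two parts yields $c_M\leq\frac{\alpha+2}{2(N+\alpha)}S_\alpha^{\frac{N+\alpha}{\alpha+2}}-c\,\varepsilon^2\log\frac1\varepsilon+O(\varepsilon^2)$ for some $c>0$; since $\varepsilon^2\log\frac1\varepsilon$ dominates $\varepsilon^2$, choosing $\varepsilon$ small produces the strict inequality $c_M<\frac{\alpha+2}{2(N+\alpha)}S_\alpha^{\frac{N+\alpha}{\alpha+2}}$, completing the proof.
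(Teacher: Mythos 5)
Your proposal is correct and takes essentially the same route as the paper's proof: reduce to $c_M<\frac{\alpha+2}{2(N+\alpha)}S_\alpha^{\frac{N+\alpha}{\alpha+2}}$ since $\tilde{C}_\kappa\geq 0$, use the maximizer $t_\varepsilon$ and the identity \eqref{eq313} to get $t_\varepsilon\to 1$, insert \eqref{eq35} together with the lower bound of Lemma \ref{lm3.4}, and make the leading $\varepsilon^2\log\frac{1}{\varepsilon}$ coefficient strictly negative by fixing the cutoff radius $\rho$ small. Your condition $L>1-\frac{\lambda}{\mu}$ is exactly the paper's requirement $\frac{C^{2\mu}_{\alpha,4}\rho^{2\mu}e^{-\frac{8\mu}{(2+\alpha)^2}+\lambda-\mu}}{(2\rho)^{4\mu}}>1$, so the two arguments coincide up to bookkeeping.
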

	\begin{proof}
		Similar to the case of $N\geq5$, we only need to show that $c_M<\frac{\alpha+2}{2(N+\alpha)}S_\alpha^\frac{N+\alpha}{\alpha+2}$, and   we can find $0<C_1<C_2<+\infty$ and  $t_\varepsilon\in(C_1, C_2)$ such that
		$$\sup\limits_{t\geq0}I(tU_{\varepsilon,\alpha})=I(t_\varepsilon U_{\varepsilon,\alpha})$$
		and
		\begin{equation}\label{eq313}
		t_\varepsilon ^2\int |\nabla U_{\varepsilon,\alpha}|^2-t_\varepsilon ^{2^*_{\alpha}}\int |x|^\alpha|U_{\varepsilon,\alpha}|^{2^*_{\alpha}}-\lambda t_\varepsilon ^2\int U_{\varepsilon,\alpha}^2-\mu t_\varepsilon ^2\int U_{\varepsilon,\alpha}^2\log t_\varepsilon^2U_{\varepsilon,\alpha}^2=0.
		\end{equation}
		From Lemma \ref{lemma31} and \eqref{eq313}, we obtain
		\begin{equation*}
		\mu\log t_\varepsilon^2\int U_{\varepsilon,\alpha}^2=O(\varepsilon^2|\log\varepsilon|),
		\end{equation*}
		and
		\begin{align*}
		t_\varepsilon^{2^*_{\alpha}-2}&=\frac{\int |\nabla U_{\varepsilon,\alpha}|^2-\lambda \int U_{\varepsilon,\alpha}^2-\mu \int U_{\varepsilon,\alpha}^2\log U_{\varepsilon,\alpha}^2-\mu\log t_\varepsilon^2\int U_{\varepsilon,\alpha}^2}{\int |x|^\alpha|U_{\varepsilon,\alpha}|^{2^*_{\alpha}}}\\
		&=\frac{S_\alpha^\frac{N+\alpha}{\alpha+2}+O(\varepsilon^2|\log\varepsilon|)}{S_\alpha^\frac{N+\alpha}{\alpha+2}+O(\varepsilon^{N+\alpha})}\rightarrow 1\quad\hbox{as}~\varepsilon\rightarrow0^+,
		\end{align*}
		which implies that
		\begin{equation*}
		\mu\log t_\varepsilon^2\int U_{\varepsilon,\alpha}^2=o(\varepsilon^2|\log\varepsilon|).
		\end{equation*}
		According to \eqref{eq35}, we obtain
		\begin{equation*}
		\int U_{\varepsilon,\alpha}^2=C_{\alpha,4}^2\omega_4\varepsilon^2\log\frac{1}{\varepsilon}+O(\varepsilon^2).
		\end{equation*}
		Therefore, we have that, for $\varepsilon$ small enough,
		\begin{align*}
		c_M&\leq \sup\limits_{t\geq0}I(tU_{\varepsilon,\alpha})\\
		&= \frac{t_\varepsilon^2}{2}\int|\nabla U_{\varepsilon,\alpha}|^2-\frac{t_\varepsilon^{2^*_{\alpha}}}{2^*_{\alpha}}\int|x|^\alpha|U_{\varepsilon,\alpha}|^{2^*_{\alpha}}-\frac{\lambda}{2}t_\varepsilon^2\int U_{\varepsilon,\alpha}^2-\frac{\mu}{2}t_\varepsilon^2\int U_{\varepsilon,\alpha}^2(\log (t_\varepsilon^2U_{\varepsilon,\alpha}^2)-1)\\
		&\leq \frac{t_\varepsilon^2}{2}S_\alpha^\frac{N+\alpha}{\alpha+2}-\frac{t_{\varepsilon}^{2^*_{\alpha}}}{2^*_{\alpha}}S_\alpha^\frac{N+\alpha}{\alpha+2}+O(\varepsilon^2)+\frac{\mu-\lambda}{2}t_{\varepsilon}^2\int U_{\varepsilon,\alpha}^2-\frac{\mu}{2}t_{\varepsilon}^2\int U_{\varepsilon,\alpha}^2\log U_{\varepsilon,\alpha}^2+o(\varepsilon^2|\log\varepsilon|)\\
		&\leq \frac{\alpha+2}{2(N+\alpha)}S_\alpha^\frac{N+\alpha}{\alpha+2}-\frac{t_\varepsilon^2}{2}\int \Big[\mu U_{\varepsilon,\alpha}^2\log U_{\varepsilon,\alpha}^2+(\lambda-\mu)U_{\varepsilon,\alpha}^2\Big]+o(\varepsilon^2|\log\varepsilon|)\\
		&\leq \frac{\alpha+2}{2(N+\alpha)}S_\alpha^\frac{N+\alpha}{\alpha+2}+o(\varepsilon^2|\log\varepsilon|)\\
		&-\frac{t_\varepsilon^2}{2}\Big(\mu \varepsilon^2C^2_{\alpha,4}\omega_4\log\frac{1}{\varepsilon}\log\Big(\frac{C^2_{\alpha,4}\rho^2e^{-\frac{8}{(2+\alpha)^2}}}{(2\rho)^4}\Big)+(\lambda-\mu)C_{\alpha,4}^2\omega_4\varepsilon^2\log\frac{1}{\varepsilon}\Big)\\
		&\leq \frac{\alpha+2}{2(N+\alpha)}S_\alpha^\frac{N+\alpha}{\alpha+2}
		-\frac{t_\varepsilon^2}{2}C_{\alpha,4}^2\omega_4\varepsilon^2\log\frac{1}{\varepsilon}
		\log\Big(\frac{C^{2\mu}_{\alpha,4}\rho^{2\mu}e^{-\frac{8\mu}{(2+\alpha)^2}+\lambda-\mu}}{(2\rho)^{4\mu}}\Big)+o(\varepsilon^2|\log\varepsilon|)\\
		&<\frac{\alpha+2}{2(N+\alpha)}S_\alpha^\frac{N+\alpha}{\alpha+2},
		\end{align*}
		where we choose $\rho>0$ small enough such that $\frac{C^{2\mu}_{\alpha,4}\rho^{2\mu}e^{-\frac{8\mu}{(2+\alpha)^2}+\lambda-\mu}}{(2\rho)^{4\mu}}>1$.
	\end{proof}
	
	\begin{proof}[\bf Proof of Theorem \ref{Th41}:]
		Assume that  $N \geq 4$  and  $(\lambda, \mu) \in A_{0}$.~By Lemma \ref{lemma22} and the Mountain pass Theorem, there exists a sequence  $\left\{u_{n}\right\} \subset H_{0,r}^{1}(\Omega)$  such that, as  $n \rightarrow \infty$,
		
		$$I\left(u_{n}\right) \rightarrow c_{M}, \quad I^{\prime}\left(u_{n}\right) \rightarrow 0, \quad \text { in }\left(H_{0,r}^{1}(\Omega)\right)^{-1},$$
		which, combining  Lemmas \ref{lemma23},  \ref{lemma25},   \ref{lemma33} and   \ref{lemma35}, implies that  that there exists  $u \in H_{0,r}^{1}(\Omega)$  such that  $u_{n} \rightarrow u$  in  $H_{0,r}^{1}(\Omega)$. So
		$$I(u)=c_{M} \quad \text { and } \quad I^{\prime}(u)=0.$$
		Thus,
		$$0=\langle I^{\prime}(u), u_{-}\rangle=\int|\nabla u_{-}|^{2},$$
		which implies that  $u \geq 0$.
		Therefore,  $u$  is a non-negative non-trivial weak solution of \eqref{eq11}. By Moser's iteration, it is standard to prove that  $u \in L^{\infty}(\Omega)$. Then the H\"{o}lder estimate implies that  $u \in C^{0, \gamma}_{loc}(\Omega)(0<\gamma<1)$. Let  $\beta:[0,+\infty) \mapsto \mathbb{R}$  be defined by
		
		$$\beta(s):=\left\{\begin{array}{ll}
		\frac{3|\mu|}{2}\left|s \log s^{2}\right|, & s>0, \\
		0, & s=0,
		\end{array}\right.$$
		then for  $a>0$  small enough, one can see that
		$$\Delta u=-|x|^\alpha u^{2^*_{\alpha}-1}-\lambda u-\mu u \log u^{2} \leq \beta(u), \quad \text { in }\{x \in \Omega: 0<u(x)<a\}.$$
		We may also assume that  $a<\frac{1}{e}$. Then  $\beta^{\prime}(s)=\frac{3|\mu|}{2}\left(-\log s^{2}-2\right)>|\mu|(-\log a-1)>0$  for  $s \in(0, a)$. So we have that  $\beta(0)=0$  and  $\beta(s)$  is nondecreasing in  $(0,a)$. Furthermore,
		$$\int_{0}^{\frac{a}{2}}(\beta(s) s)^{-\frac{1}{2}} \mathrm{~d} s=-\sqrt{\frac{2}{3|\mu|}}(-2 \log s)^{\frac{1}{2}}\Big|_{0} ^{\frac{a}{2}}=+\infty.$$
		Hence, by \cite[Theorem 1]{vjl}, we have that  $u(x)>0$  in  $\Omega$.~In particular, for any compact  $K \subset \subset \Omega$, there exists $c=c(K)>0$  such that $u(x) \geq c, ~\forall x \in K$. Taking  $K \subset \subset K_{1} \subset \subset \Omega$  and putting   $f(x):=-|x|^\alpha u(x)^{2^*_{\alpha}-1}-\lambda u(x)-\mu u(x) \log u(x)^{2}$, then $\Delta u=f(x)$  in  $K_{1}$ and $f$  is of $ C^{0, \gamma }$  in  $K_{1}$. So by the standard Schauder estimate, we see that  $u \in C^{2, \gamma}(K)$. By the arbitrariness of  $K$, we obtain that  $u \in C^{2}(\Omega)$  and  $u>0$  in  $\Omega$.
	\end{proof}
	
	\section{Finding a  least energy solution  for the case  of $\mu<0$}
	
	\begin{proof}[\bf Proof of Theorem \ref{Th61}:] We divide the proof into three cases:
		
		\textbf{Step  1:} We show that $\tilde{C}_{\rho}$ can be attained  by some $\bar{u}\in A:=\{u\in H^1_{0,r}(\Omega):|\nabla u|_2<\rho\}$ with $\bar{u}\in C^2(\Omega)$ and $\bar{u}>0$.

		By Lemma \ref{lemma51}, we obtain $-\infty<\tilde{C}_{\rho}<0$. By Lemma \ref{lemma22}, we can get a minimizing sequence $\{u_n\}$ for $\tilde{C}_{\rho}$ with $|\nabla u_n|_2<\rho-\tau$ and $\tau>0$ small enough.  By Ekeland's
		variational principle(see \cite{m}), we known that there exists a sequence $\{\bar{u}_n\}\subset H_{0,r}^1(\Omega)$ such that $||u_n-\bar{u}_n||\rightarrow0$, $I(\bar{u}_n)\rightarrow\tilde{C}_{\rho}$ and $I'(\bar{u}_n)\rightarrow0$. Due to Lemma \ref{lemma23},
		we can see that $\{\bar{u}_n\}$ is bounded in $H_{0,r}^1(\Omega)$.
		Hence, we may assume that
		\begin{align}
		\begin{array}{lll}
		\bar{u}_n\rightharpoonup \bar{u}~~\hbox{weakly~in}~H_{0,r}^1(\Omega),\\
		\bar{u}_n\rightarrow \bar{u}~~\hbox{strongly~in}~L^q(\Omega;|x|^\alpha),~1\leq q<2^*_{\alpha}, \\
		\bar{u}_n\rightarrow \bar{u}~~\hbox{strongly~in}~L^p(\Omega),~1\leq p<2^*, \\
		\bar{u}_n\rightarrow \bar{u}~~\hbox{a.e.}~\hbox{in}~\Omega.
		\end{array}
		\end{align}
		By the weak-lower semi-continuity of the norm, we see that $|\nabla \bar{u}|_2<\rho$. By $I'(\bar{u}_n)\rightarrow0$, we have that $I'(\bar{u})=0$ and $\bar{u}\geq 0$.
		Letting $\omega_n=\bar{u}_n-\bar{u}$, we obtain
		\begin{equation*}
		\int |\nabla \omega_n|^2-\int |x|^\alpha|(\omega_n)_+|^{2^*_{\alpha}}=o_n(1),
		\end{equation*}
		and
		\begin{equation*}
		I(\bar{u}_n)=I(\bar{u})+\frac{\alpha+2}{2(N+\alpha)}\int|\nabla \omega_n|^2+o_n(1).
		\end{equation*}
		Let $\int|\nabla \omega_n|^2\rightarrow k$, we have that, as $n\rightarrow\infty$,
		\begin{align*}
		\tilde{C}_{\rho}\leq I(\bar{u})&\leq I(\bar{u})+\frac{\alpha+2}{2(N+\alpha)}k
		=\lim\limits_{n\rightarrow\infty}I(\bar{u}_n)=\tilde{C}_{\rho},
		\end{align*}
		where we have used the fact that $|\nabla \bar{u}|_2<\rho$ and which implies that $k=0$.~Hence, we obtain
		$$\bar{u}_n\rightarrow \bar{u} ~\hbox{strongly in}~ H_{0,r}^1(\Omega)$$
		and
		$$I( \bar{u})=\tilde{C}_{\rho}.$$

		Since $I( \bar{u})=\tilde{C}_{\rho}<0$, we have $\bar{u}\neq0$. Then, by a similar argument  as used in the  proof of Theorem \ref{Th41}, we can get $\bar{u}>0$ and $\bar{u}\in C^2(\Omega)$.

		\textbf{Step  2:} We prove  that $\tilde{C}_{\kappa}$ can be attained  by some $\tilde{u}\in \kappa$ with $\tilde{u}\in C^2(\Omega)$ and $\tilde{u}>0$.

		Due to step 1 and Lemma \ref{lemma52}, we obtain $-\infty<\tilde{C}_{\kappa}<0$.
		Take a minimizing sequence $\{u_n\}\subset\kappa$ for $\tilde{C}_{\kappa}$. Then $I'(u_n)=0$ and $I(u_n)\rightarrow\tilde{C}_{\kappa}$. By Lemma \ref{lemma23}, we can see that $\{u_n\}$ is bounded in $H_{0,r}^1(\Omega)$.~Hence, we can assume that
		\begin{align}
		\begin{array}{lll}
		u_n\rightharpoonup \tilde{u}~~\hbox{weakly~in}~H_{0,r}^1(\Omega),\\
		u_n\rightarrow \tilde{u}~~\hbox{strongly~in}~L^q(\Omega;|x|^\alpha),~1\leq q<2^*_{\alpha}, \\
		u_n\rightarrow \tilde{u}~~\hbox{strongly~in}~L^p(\Omega),~1\leq p<2^*, \\
		u_n\rightarrow \tilde{u}~~\hbox{a.e.}~\hbox{in}~\Omega.
		\end{array}
		\end{align}
		Then, going on as step 1, we obtain
		$$I(\tilde{u})=\tilde{C}_{\kappa}~~\hbox{and}~~I^\prime(\tilde{u})=0.$$
		
		Since $I(\tilde{u})=\tilde{C}_{\kappa}<0$, we have $\tilde{u}\neq0$. By a similar argument  as used in the proof Theorem \ref{Th41}, we can get $\tilde{u}>0$ and $\tilde{u}\in C^2(\Omega)$.

		\textbf{Step  3:} We will show that $\tilde{C}_\kappa=\tilde{C}_\rho$.

		Since $\bar{u}\in \kappa$, where $\bar{u}$ is the function  found in the step 1,  we have
		\begin{equation}\label{e6251}
		\tilde{C}_{\rho}=I(\bar{u})\geq\tilde{C}_\kappa.
		\end{equation}
		
		On the other hand, for the solution $\tilde{u}$ obtained in the step 2, we have that $I(\tilde{u})=\tilde{C}_{\kappa},~I'(\tilde{u})=0$ and $\tilde{u}>0$ in $\Omega$.~We consider the function $g(t):=I(t\tilde{u})$,~$t>0$. By
		Lemmas \ref{lemma22},  \ref{lemma53}  and the facts that $g(t)<0$ for $t>0$  small enough and $\lim\limits_{t\rightarrow+\infty}g(t)=-\infty$, we obtain  that $g(t)$ has only two extreme points $t_1, ~t_2\in(0,+\infty)$ with $t_1<t_2$. At the same
		time, we also have that $t_1$ and $t_2$ are the local minimum point and the maximum point of $g(t)$ respectively, and $g(t_2)>0$ and
		\begin{equation}\label{eq64}
		g(t_1)<g(t)<0~~\hbox{for any}~t\in(0,t_1).
		\end{equation}
		Since $I(\tilde{u})=\tilde{C}_{\kappa}<0$ and $I'(\tilde{u})=0$, we see that $g(1)<0$ and $g'(1)=0$, which tells us that
		1 is a extreme point of $g(t)$ and $g(1)<0$. So $t_1=1$.  According to \eqref{eq64}, we have that
		\begin{equation}\label{eq65}
		g(t)<0~~\hbox{for any}~t\in(0,1].
		\end{equation}
		
		We claim that $|\nabla\tilde{u}|_2<\rho$. In fact, if $|\nabla\tilde{u}|_2\geq\rho$, then  we can find a $t_3\in(0,1]$ such that $t_3|\nabla\tilde{u}|_2=\rho$, which, together with Lemma \ref{lemma22},  implies that $g(t_3)=I(t_3{\tilde{u}})\geq\delta>0$, contradicting to \eqref{eq65}. Thus, $|\nabla\tilde{u}|_2<\rho$, which implies that $\tilde{u}\in A$ and
		\begin{equation}\label{e6252}
		\tilde{C}_{\rho}\leq I(\tilde{u})=\tilde{C}_{\kappa}.
		\end{equation}

		According to \eqref{e6251} and \eqref{e6252}, we obtain that $\tilde{C}_\kappa=\tilde{C}_\rho$.
		
		We complete the proof.
	\end{proof}

	\section{Finding  a  Mountain pass solution for the case  of $\mu<0$}
	
	\begin{lemma}\label{lemma54}
		For any $\beta>0$, there exist  $B_1, B_2, B_3>0$ such that, for any $t\in(0,+\infty)$,
		\begin{equation*}
		g(\bar{u},tU_{\varepsilon,\alpha})\leq (tU_{\varepsilon,\alpha})^{2+\beta}+B_1(tU_{\varepsilon,\alpha})^2,~~x\in \Omega,
		\end{equation*}
		\begin{equation*}
		f(2^*_{\alpha},\bar{u},tU_{\varepsilon,\alpha})\geq \frac{2^*_{\alpha}}{2}L_1(tU_{\varepsilon,\alpha})^{2^*_{\alpha}-1}-B_2(tU_{\varepsilon,\alpha})^2,~~x\in \Omega, ~~2<N<6+2\alpha,
		\end{equation*}
		and
		\begin{equation*}
		|f(2^*_{\alpha},\bar{u},tU_{\varepsilon,\alpha})|\leq \frac{(2^*_{\alpha})^2}{2}L_2^{2^*_{\alpha}-2}(tU_{\varepsilon,\alpha})^2+B_2L_2(tU_{\varepsilon,\alpha})^{2^*_{\alpha}-1},~~x\in \Omega,~~N\geq3,
		\end{equation*}
		where $\bar{u}$ is the local minimum solution obtained in Theorem \ref{Th61}, $U_{\varepsilon,\alpha} $ is given in \eqref{eq31}, $$L_1:=\inf\limits_{x\in B_{2\rho}(0)}\bar{u},~~~L_2:=\sup\limits_{x\in B_{2\rho}(0)}\bar{u},$$
		$$
		g(x,y):=(x+y)^2\log(x+y)^2-x^2\log x^2-2xy\left(\log x^2+1\right),
		$$
		and
		$$
		f(2^*_{\alpha},x,y):=(x+y)^{2^*_{\alpha}}-x^{2^*_{\alpha}}-y^{2^*_{\alpha}}-2^*_{\alpha}x^{2^*_{\alpha}-1}y,
		$$
		
	\end{lemma}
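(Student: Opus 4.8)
The plan is to prove all three inequalities as \emph{pointwise} estimates in $x\in\Omega$. Since $U_{\varepsilon,\alpha}$ is supported in $B_{2\rho}(0)$, both sides vanish off that ball, so only the region where $0<L_1\le\bar u(x)\le L_2<\infty$ matters; there it suffices to establish scalar inequalities in the two variables $x=\bar u(x)\in[L_1,L_2]$ and $y:=tU_{\varepsilon,\alpha}(x)\ge0$. Writing $p:=2^*_{\alpha}$ and $\phi(s):=s^2\log s^2$, I would first observe the structural fact that $g(x,y)=\phi(x+y)-\phi(x)-\phi'(x)y$ is the first-order Taylor remainder of $\phi$, while $f(p,x,y)=\big[(x+y)^p-x^p-px^{p-1}y\big]-y^p$ is the second-order Taylor remainder of $s\mapsto s^p$ with $y^p$ subtracted. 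Because $f$ is positively homogeneous of degree $p$ in $(x,y)$, its two estimates reduce, after dividing by $x^p$ and setting $t=y/x$, to one-variable inequalities for $\psi(t):=(1+t)^p-1-t^p-pt$; the bound on $g$ is handled directly because of the logarithm.

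For the first inequality I would start from the identity
\[
g(x,y)=2x^2\log\!\Big(1+\tfrac{y}{x}\Big)+4xy\log\!\Big(1+\tfrac{y}{x}\Big)+y^2\log(x+y)^2-2xy,
\]
valid since $x,x+y>0$. Applying $\log(1+s)\le s$ to the first two terms gives $2x^2\log(1+y/x)\le 2xy$ and $4xy\log(1+y/x)\le 4y^2$, so the leading $2xy$ cancels and
\[
g(x,y)\le 4y^2+y^2\log(x+y)^2\le 4y^2+y^2\log(L_2+y)^2.
\]
The function $y\mapsto\log(L_2+y)^2-y^\beta$ is continuous on $(0,+\infty)$, tends to $\log L_2^2$ as $y\to0^+$ and to $-\infty$ as $y\to+\infty$ (the power beats the logarithm), hence is bounded above by some $M=M(\beta,L_2)$. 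Taking $B_1:=M+4>0$ gives $g(x,y)\le y^{2+\beta}+B_1y^2$, i.e. the claim with $y=tU_{\varepsilon,\alpha}$.

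For the two $f$-estimates I would analyze $\psi$ on $[0,\infty)$. Since $s\mapsto s^{p-1}$ is superadditive for $p\ge2$, one has $\psi'(t)=p\big[(1+t)^{p-1}-t^{p-1}-1\big]\ge0$ with $\psi(0)=0$, hence $\psi\ge0$ and therefore $f\ge0$, which removes the absolute value in the third inequality. Using $\psi(t)\sim\tfrac{p(p-1)}2 t^2$ as $t\to0^+$ and $\psi(t)\sim p\,t^{p-1}$ as $t\to+\infty$, the quotient $R(t):=\big(\psi(t)-\tfrac{p^2}{2}t^2\big)/t^{p-1}$ is continuous on $(0,\infty)$ with finite or $-\infty$ limits at both ends for every $p>2$, so it is bounded above by some $B>0$; thus $\psi(t)\le\tfrac{p^2}{2}t^2+B\,t^{p-1}$. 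Multiplying by $x^p$, substituting $t=y/x$, and using $x^{p-2}\le L_2^{p-2}$ and $x\le L_2$ yields $f\le\tfrac{(2^*_\alpha)^2}{2}L_2^{p-2}y^2+B L_2\,y^{p-1}$, which is the third inequality with $B_2:=B$. For the second inequality I would bound $\psi$ from below: set $\Psi(t):=\psi(t)-\tfrac{p}{2}t^{p-1}$; then $\Psi(t)/t^2$ is continuous on $(0,\infty)$, tends to $\tfrac{p(p-1)}2$ as $t\to0^+$ and to $+\infty$ as $t\to+\infty$, so it is bounded below by some $-B'$, giving $\psi(t)\ge\tfrac{p}{2}t^{p-1}-B't^2$. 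Multiplying by $x^p$ and then using $x\ge L_1$ in the $y^{p-1}$ term and $x^{p-2}\le L_2^{p-2}$ in the $y^2$ term produces $f\ge\tfrac{2^*_\alpha}{2}L_1\,y^{p-1}-B_2y^2$ with $B_2:=B'L_2^{p-2}$.

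The main obstacle is the lower bound (the second inequality): the finiteness of $\inf_{t>0}\Psi(t)/t^2$ relies on the limit $\tfrac{p(p-1)}2$ at $t=0$ being finite, which requires the subtracted power $t^{p-1}$ to be of higher order than $t^2$, i.e. $p-1>2$. This is exactly $2^*_\alpha>3$, equivalently $N<6+2\alpha$, so the dimension restriction in the statement enters precisely here; for $2<p\le3$ the same quotient would blow up at the origin and the estimate fails. The remaining work — verifying the two asymptotic expansions of $\psi$ at $0$ and $\infty$ and tracking the constants through the homogeneity rescaling and the bounds $L_1\le\bar u\le L_2$ — is routine.
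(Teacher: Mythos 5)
Your proof is correct, and it is worth noting that the paper itself gives no argument for this lemma: its ``proof'' is a single sentence deferring to the preprint \cite{heq}. Your proposal therefore supplies what the paper omits, by a self-contained elementary route: reduce to scalar inequalities in $x=\bar u(x)\in[L_1,L_2]$ and $y=tU_{\varepsilon,\alpha}(x)\ge 0$ (legitimate, since all three inequalities read $0\le 0$ or $0\ge 0$ wherever $U_{\varepsilon,\alpha}=0$); treat $g$ via the exact identity obtained from $\log(x+y)^2=\log x^2+2\log(1+y/x)$ together with $\log(1+s)\le s$; and treat $f$ via the homogeneity $f(p,x,y)=x^p\psi(y/x)$ with $\psi(t)=(1+t)^p-1-t^p-pt$, reducing each bound to boundedness of a quotient that is continuous on $(0,\infty)$ with computable limits at $0^+$ and $+\infty$. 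I verified the identity for $g$, the cancellation of the $2xy$ terms, the sign $\psi\ge 0$ (superadditivity of $s\mapsto s^{p-1}$ for $p\ge 2$), the asymptotics $\psi(t)\sim\frac{p(p-1)}{2}t^2$ at $0^+$ and $\psi(t)\sim p\,t^{p-1}$ at $+\infty$, and the constant-tracking through $L_1\le x\le L_2$; all are sound, the constants depend only on $\beta,p,L_1,L_2$ (not on $t$, $x$, or $\varepsilon$), and your argument correctly isolates the unique place where the restriction $N<6+2\alpha$, i.e.\ $2^*_\alpha>3$, is needed: the finiteness of $\lim_{t\to 0^+}\Psi(t)/t^2$.

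One small inaccuracy in a side remark only: you assert that the lower estimate ``fails'' for all $2<p\le 3$. At $p=3$ it does not fail, since there $\psi(t)=(1+t)^3-1-t^3-3t=3t^2$ exactly, so $\psi(t)\ge\frac{p}{2}t^{p-1}=\frac{3}{2}t^2$ holds with $B'=0$ and your quotient $\Psi(t)/t^2\equiv\frac32$ does not blow up; the genuine failure occurs only for $2<p<3$. This boundary case is excluded by the lemma's hypothesis anyway, so it does not affect the validity of your proof.
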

	\begin{proof}
		Similar  proof  can be found in  \cite{heq}. So we omit it here.
	\end{proof}


	\begin{lemma}\label{lemma45}
		There exist  $\varepsilon_0>0$ and $T>4\rho/{S_{
				\alpha}^{\frac{N+\alpha}{2(\alpha+2)}}}$, dependent of $\varepsilon_0$, such that when $\varepsilon\leq \varepsilon_0$,
		\begin{equation*}
		I(\bar{u}+TU_{\varepsilon,\alpha})<I(\bar{u})<0.
		\end{equation*}
	\end{lemma}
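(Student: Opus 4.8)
The plan is to expand the energy difference $I(\bar u+TU_{\varepsilon,\alpha})-I(\bar u)$ about the positive critical point $\bar u$, use $I'(\bar u)=0$ to annihilate every cross term, and then exploit that $2^*_{\alpha}>2$ (recall $\alpha>-2$ forces $2^*_{\alpha}>2$) to see that the critical term drives the difference to $-\infty$ as $T\to\infty$. First I would note that $\bar u,\,U_{\varepsilon,\alpha}\ge 0$, hence $(\bar u+TU_{\varepsilon,\alpha})_+=\bar u+TU_{\varepsilon,\alpha}$, and that $L_1=\inf_{B_{2\rho}(0)}\bar u>0$, $L_2=\sup_{B_{2\rho}(0)}\bar u<\infty$ since $\bar u\in C^2(\Omega)$ and $\bar u>0$ by Theorem \ref{Th61}; thus Lemma \ref{lemma54} is available with $y=TU_{\varepsilon,\alpha}$. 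Substituting $\langle I'(\bar u),TU_{\varepsilon,\alpha}\rangle=0$, i.e. $\int\nabla\bar u\cdot\nabla(TU_{\varepsilon,\alpha})=\int|x|^\alpha\bar u^{2^*_{\alpha}-1}(TU_{\varepsilon,\alpha})+\lambda\int\bar u(TU_{\varepsilon,\alpha})+\mu\int\bar u(TU_{\varepsilon,\alpha})\log\bar u^2$, and reorganizing the critical and logarithmic terms through the remainders $f,g$ of Lemma \ref{lemma54}, the three linear cross terms cancel exactly, leaving
\begin{align*}
I(\bar u + TU_{\varepsilon,\alpha}) - I(\bar u)
&= \frac{T^2}{2}\int|\nabla U_{\varepsilon,\alpha}|^2 - \frac{T^{2^*_{\alpha}}}{2^*_{\alpha}}\int|x|^\alpha U_{\varepsilon,\alpha}^{2^*_{\alpha}} + \frac{\mu-\lambda}{2}T^2\int U_{\varepsilon,\alpha}^2 \\
&\quad - \frac{1}{2^*_{\alpha}}\int|x|^\alpha f(2^*_{\alpha},\bar u,TU_{\varepsilon,\alpha}) - \frac{\mu}{2}\int g(\bar u,TU_{\varepsilon,\alpha}).
\end{align*}

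Next I would insert the concentration asymptotics and control the remainders. By Lemma \ref{lemma31}, $\int|\nabla U_{\varepsilon,\alpha}|^2=S_\alpha^{\frac{N+\alpha}{2+\alpha}}+O(\varepsilon^{N-2})$ (see \eqref{eq32}), $\int|x|^\alpha U_{\varepsilon,\alpha}^{2^*_{\alpha}}=S_\alpha^{\frac{N+\alpha}{2+\alpha}}+O(\varepsilon^{N+\alpha})$ (see \eqref{eq33}), and $\int U_{\varepsilon,\alpha}^2=o(1)$ (see \eqref{eq35}). For the remainder terms I would choose $\beta\in(0,2^*-2)$ and use the pointwise bounds of Lemma \ref{lemma54}: $g(\bar u,TU_{\varepsilon,\alpha})\le (TU_{\varepsilon,\alpha})^{2+\beta}+B_1(TU_{\varepsilon,\alpha})^2$ and $|f(2^*_{\alpha},\bar u,TU_{\varepsilon,\alpha})|\le \tfrac{(2^*_{\alpha})^2}{2}L_2^{2^*_{\alpha}-2}(TU_{\varepsilon,\alpha})^2+B_2L_2(TU_{\varepsilon,\alpha})^{2^*_{\alpha}-1}$. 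Since $2+\beta\in[1,2^*)$ and both $2$ and $2^*_{\alpha}-1$ lie in $[1,2^*_{\alpha})$, the integrals $\int U_{\varepsilon,\alpha}^{2+\beta}$, $\int|x|^\alpha U_{\varepsilon,\alpha}^2$ and $\int|x|^\alpha U_{\varepsilon,\alpha}^{2^*_{\alpha}-1}$ all tend to $0$ as $\varepsilon\to 0^+$ by \eqref{eq34} and \eqref{eq53} (multiplied by the $\mu<0$ factor in the case of $g$). Consequently, for each fixed $T$,
\[
I(\bar u + TU_{\varepsilon,\alpha}) - I(\bar u) = \Big(\frac{T^2}{2} - \frac{T^{2^*_{\alpha}}}{2^*_{\alpha}}\Big)S_\alpha^{\frac{N+\alpha}{2+\alpha}} + o(1)\qquad\text{as } \varepsilon\to 0^+.
\]

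Finally I would fix the constants in the right order. Writing $\Phi(T):=\big(\tfrac{T^2}{2}-\tfrac{T^{2^*_{\alpha}}}{2^*_{\alpha}}\big)S_\alpha^{\frac{N+\alpha}{2+\alpha}}$, the inequality $2^*_{\alpha}>2$ gives $\Phi(T)\to-\infty$ as $T\to\infty$; I would pick $T$ so large that simultaneously $T>4\rho/S_\alpha^{\frac{N+\alpha}{2(\alpha+2)}}$ and $\Phi(T)\le -c_0<0$ for some constant $c_0$. With $T$ now frozen, the $o(1)$ above allows me to choose $\varepsilon_0>0$ so that $I(\bar u+TU_{\varepsilon,\alpha})-I(\bar u)\le \Phi(T)+c_0/2<0$ for all $\varepsilon\le\varepsilon_0$; together with $I(\bar u)=\tilde C_\rho<0$ from Theorem \ref{Th61} this yields $I(\bar u+TU_{\varepsilon,\alpha})<I(\bar u)<0$, as claimed. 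The main obstacle is not the large-$T$ domination itself but making sure the genuinely nonlinear corrections $f$ and $g$ — created precisely by centering the bubble $U_{\varepsilon,\alpha}$ on the nonzero background $\bar u$, with $g$ carrying a slightly super-quadratic $T^{2+\beta}$ growth and $f$ a $T^{2^*_{\alpha}-1}$ growth — remain negligible. This is exactly why the quantifiers must be ordered as ``fix $T$ first, then let $\varepsilon\to 0$,'' so that the subcritical decay estimates of Lemma \ref{lemma31} absorb these corrections.
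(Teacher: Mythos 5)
Your proposal is correct, but it takes a genuinely different route from the paper's own proof of Lemma \ref{lemma45}. The paper argues much more crudely and never invokes $I'(\bar u)=0$ or Lemma \ref{lemma54}: it bounds $\tfrac12\int|\nabla(\bar u+tU_{\varepsilon,\alpha})|^2\le\int|\nabla \bar u|^2+t^2\int|\nabla U_{\varepsilon,\alpha}|^2$, discards the mixed parts of the critical term via $(\bar u+tU_{\varepsilon,\alpha})^{2^*_\alpha}\ge (tU_{\varepsilon,\alpha})^{2^*_\alpha}$ (both functions being nonnegative), absorbs the linear and logarithmic terms into $C\int|\bar u+tU_{\varepsilon,\alpha}|^{2+\eta}+C\int|\bar u+tU_{\varepsilon,\alpha}|^{2-\eta}$ with $0<\eta<\min\{2^*_\alpha-2,1\}$, and then uses only $\varepsilon$-uniform \emph{boundedness} (not smallness) of the bubble integrals for $\varepsilon\le\varepsilon_0$, namely $\int|\nabla U_{\varepsilon,\alpha}|^2\le 2S_\alpha^{\frac{N+\alpha}{2+\alpha}}$, $\int|x|^\alpha U_{\varepsilon,\alpha}^{2^*_\alpha}\ge\tfrac12 S_\alpha^{\frac{N+\alpha}{2+\alpha}}$ and $\int U_{\varepsilon,\alpha}^{2\pm\eta}\le C$, to obtain an upper bound of the form $C+Ct^2+Ct^{2+\eta}+Ct^{2-\eta}-ct^{2^*_\alpha}\to-\infty$ as $t\to+\infty$; the constant $T$ is then chosen after $\varepsilon_0$. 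You instead carry out the exact expansion around the critical point $\bar u$ (cross terms cancelling by $I'(\bar u)=0$), control the remainders $f,g$ by Lemma \ref{lemma54} and the decay estimates of Lemma \ref{lemma31}, and reach the asymptotically sharp identity $I(\bar u+TU_{\varepsilon,\alpha})-I(\bar u)=\Phi(T)+o(1)$, fixing $T$ first and shrinking $\varepsilon_0$ afterwards; either ordering of quantifiers yields the existence claimed in the statement. What each buys: the paper's argument is more elementary and needs neither $L_2=\sup_{B_{2\rho}(0)}\bar u<\infty$ nor any vanishing of subcritical bubble integrals, while yours is quantitative and essentially pre-computes the finer expansion that the paper defers to Lemma \ref{lemma56}, at the price of heavier machinery at an earlier stage. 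One detail worth noting in your favour: you only use the upper bounds of Lemma \ref{lemma54} involving $B_1$, $B_2$, $L_2$ (not the lower bound on $f$ involving $L_1$), so no dimension restriction of the type $2<N<6+2\alpha$ is smuggled in, and your proof, like the paper's, is valid for all $N\ge3$.
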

	\begin{proof}
		By direct computations, we have that, for $\varepsilon$ small enough,
		\begin{align*}
		&I(\bar{u}+tU_{\varepsilon,\alpha})\\
		&=\frac{1}{2}\int|\nabla(\bar{u}+tU_{\varepsilon,\alpha})|^2-\frac{1}{2^*_{\alpha}}\int|x|^{\alpha}|\bar{u}
		+tU_{\varepsilon,\alpha}|^{2^*_{\alpha}}-\frac{\lambda}{2}\int|\bar{u}+tU_{\varepsilon,\alpha}|^2\\
		&-\frac{\mu}{2}\int(\bar{u}+tU_{\varepsilon,\alpha})^2\left(\log(\bar{u}+tU_{\varepsilon,\alpha})^2-1\right)\\
		&\leq \int|\nabla \bar{u}|^2+t^2\int|\nabla U_{\varepsilon,\alpha}|^2-\frac{t^{2^*_{\alpha}}}{2^*_{\alpha}}\int|x|^{\alpha}|U_{\varepsilon,\alpha}|^{2^*_{\alpha}}
		+C\int|\bar{u}+tU_{\varepsilon,\alpha}|^{2+\eta}+C\int|\bar{u}+tU_{\varepsilon,\alpha}|^{2-\eta}\\
		&\leq C+t^2\int|\nabla U_{\varepsilon,\alpha}|^2-\frac{t^{2^*_{\alpha}}}{2^*_{\alpha}}\int|x|^{\alpha}|U_{\varepsilon,\alpha}|^{2^*_{\alpha}}
		+Ct^{2+\eta}\int U_{\varepsilon,\alpha}^{2+\eta}+Ct^{2-\eta}\int U_{\varepsilon,\alpha}^{2-\eta}\\
		&\leq C+(2S_{\alpha}^{\frac{N+\alpha}{2+\alpha}})t^2-\frac{t^{2^*_{\alpha}}}{2^*_{\alpha}}(\frac{1}{2}S_{\alpha}^{\frac{N+\alpha}{2+\alpha}})
		+Ct^{2+\eta}+Ct^{2-\eta}\\
		&\rightarrow-\infty,~~\hbox{as}~t\rightarrow+\infty,
		\end{align*}
		where $0<\eta<\min\{2^*_{\alpha}-2,1\}$ and which implies that there exist $\varepsilon_0>0$ and $T>{4\rho}/{S_{
				\alpha}^{\frac{N+\alpha}{2(\alpha+2)}}}$, dependent of $\varepsilon_0$, such that when $\varepsilon\leq \varepsilon_0$,
		\begin{equation*}
		I(\bar{u}+TU_{\varepsilon,\alpha})<I(\bar{u})<0.
		\end{equation*}
	\end{proof}

	\begin{lemma}\label{lm741}
		For any $\varepsilon$ small  enough, there exists a $t_{\varepsilon}\in(0,T)$ such that
		\begin{equation*}
		I(\bar{u}+t_{\varepsilon}U_{\varepsilon,\alpha})=\max\limits_{t\in[0,T]}I(\bar{u}+tU_{\varepsilon,\alpha})\geq\delta
		\end{equation*}
		and
		\begin{equation*}
		0<\inf\limits_{\varepsilon}t_{\varepsilon}\leq t_{\varepsilon}\leq T.
		\end{equation*}
	\end{lemma}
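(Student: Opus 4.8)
The plan is to study the scalar function $\phi(t):=I(\bar u+tU_{\va,\al})$ on the compact interval $[0,T]$ and to locate its maximum. First I would record the behaviour at the two endpoints: by Theorem \ref{Th61} we have $\phi(0)=I(\bar u)<0$, while Lemma \ref{lemma45} gives $\phi(T)=I(\bar u+TU_{\va,\al})<I(\bar u)<0$. Since $\phi$ is continuous on $[0,T]$ (the only delicate term is the logarithmic one, whose continuity along sequences converging a.e.\ with bounded $H_{0,r}^1$-norm is supplied by Lemma \ref{lemma24}), it attains its maximum at some $t_\va\in[0,T]$; the real work is then to show this maximum is interior, bounded below by a fixed $\delta>0$, and that $t_\va$ cannot degenerate to $0$ as $\va\to0^+$.

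For the interior maximum I would exploit the Mountain pass geometry of Lemma \ref{lemma22}. Writing $\|v\|=(\int|\na v|^2)^{1/2}$, we have $\|\bar u\|<\rho$ because $|\na\bar u|_2<\rho$, whereas at the other endpoint the triangle inequality together with $\|U_{\va,\al}\|^2=S_\al^{\frac{N+\al}{2+\al}}+O(\va^{N-2})$ (Lemma \ref{lemma31}) and $T>4\rho/S_\al^{\frac{N+\al}{2(\al+2)}}$ yields, for $\va$ small, $\|\bar u+TU_{\va,\al}\|\ge T\|U_{\va,\al}\|-\|\bar u\|>\rho$. Hence the continuous curve $t\mapsto\|\bar u+tU_{\va,\al}\|$ passes from a value $<\rho$ at $t=0$ to a value $>\rho$ at $t=T$, so there is $t^\ast\in(0,T)$ with $\|\bar u+t^\ast U_{\va,\al}\|=\rho$. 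By Lemma \ref{lemma22}(i), $\phi(t^\ast)\ge\sigma=:\delta>0$, whence $\max_{[0,T]}\phi\ge\delta>0$. Since both endpoint values are negative, the maximum is attained at an interior point $t_\va\in(0,T)$ with $\phi(t_\va)\ge\delta$; the bound $t_\va\le T$ is then immediate.

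Finally, for the uniform lower bound $\inf_\va t_\va>0$ I would argue by contradiction. If there were a sequence $\va_n\to0^+$ with $t_{\va_n}\to0$, then, since $\|U_{\va_n,\al}\|$ stays uniformly bounded, $\|t_{\va_n}U_{\va_n,\al}\|=t_{\va_n}\|U_{\va_n,\al}\|\to0$, so $\bar u+t_{\va_n}U_{\va_n,\al}\to\bar u$ strongly in $H_{0,r}^1(\Om)$ and, along a subsequence, a.e.\ in $\Om$. The gradient, weighted critical and quadratic terms of $I$ then converge by strong $H^1$ convergence and the continuous embeddings (Lemma \ref{lemma21}), while the logarithmic term converges by Lemma \ref{lemma24}; hence $I(\bar u+t_{\va_n}U_{\va_n,\al})\to I(\bar u)<0$. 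This contradicts $\phi(t_{\va_n})\ge\delta>0$, and therefore $\inf_\va t_\va>0$.

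The main obstacle is precisely this last continuity step: the logarithmic nonlinearity is not of polynomial type, so the convergence of $\int(\cdot)_+^2\log(\cdot)_+^2$ under $H^1$-convergence cannot be read off from a Sobolev embedding and must be provided by Lemma \ref{lemma24}. Everything else is a clean combination of the Mountain pass geometry of Lemma \ref{lemma22}, the bubble asymptotics of Lemma \ref{lemma31}, and the exit estimate of Lemma \ref{lemma45}.
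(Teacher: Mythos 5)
Your proof is correct and follows essentially the same route as the paper's (which the paper delegates to Lemma 3.6 of \cite{heq}): the triangle inequality together with $T>4\rho/S_\alpha^{\frac{N+\alpha}{2(\alpha+2)}}$ forces the curve $t\mapsto\|\bar u+tU_{\varepsilon,\alpha}\|$ to cross the sphere $\|\cdot\|=\rho$, Lemma \ref{lemma22}(i) then yields the level $\delta=\sigma$, the negative values at both endpoints (from Theorem \ref{Th61} and Lemma \ref{lemma45}) place the maximum in the interior, and $\inf_\varepsilon t_\varepsilon>0$ follows by the same contradiction argument. Your explicit appeal to Lemma \ref{lemma24} to justify continuity of the logarithmic term under strong $H_{0,r}^1$ convergence is a welcome tightening of a step that the reference leaves implicit.
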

	\begin{proof}
		We can prove it just as showing  Lemma 3.6 of   \cite{heq} and omit it here.
		%
	\end{proof}

	Set
	\begin{equation*}
	\gamma_{0}(t):=\bar{u}+tTU_{\varepsilon,\alpha},~t\in[0,1].
	\end{equation*}
	It is easy to see that $\gamma_{0}(t)\in \Gamma$ and $c_M\leq \sup\limits_{t\in[0,1]}I(\gamma_{0}(t))\leq I(\bar{u}+t_{\varepsilon}U_{\varepsilon,\alpha}).$

	\begin{lemma}\label{lemma56}
		Assume that $2<N<\min\{6,6+2\alpha\}$. Then  there exists $\varepsilon_1\leq\varepsilon_0$ such that when $\varepsilon\leq\varepsilon_1$,
		\begin{equation*}
		c_M\leq I(\bar{u}+t_{\varepsilon}U_{\varepsilon,\alpha})<\tilde{C}_{\kappa}+\frac{\alpha+2}{2(N+\alpha)}S_{\alpha}^{\frac{N+\alpha}{2+\alpha}}.
		\end{equation*}
	\end{lemma}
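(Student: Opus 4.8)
The plan is to expand $I(\bar u+tU_{\varepsilon,\alpha})$ around the least-energy solution $\bar u$ produced in Theorem \ref{Th61}, exploit that $\bar u$ is a positive critical point to annihilate the terms linear in $t$, and then read off the orders of the remaining terms from Lemmas \ref{lemma31} and \ref{lemma54}. Writing $v=tU_{\varepsilon,\alpha}\ge0$ and using $\bar u>0$ (so $(\bar u+v)_+=\bar u+v$), I would first record the two algebraic identities
\[
(\bar u+v)^{2^*_\alpha}=\bar u^{2^*_\alpha}+2^*_\alpha\bar u^{2^*_\alpha-1}v+v^{2^*_\alpha}+f(2^*_\alpha,\bar u,v),
\]
\[
(\bar u+v)^2\log(\bar u+v)^2=\bar u^2\log\bar u^2+2\bar uv(\log\bar u^2+1)+g(\bar u,v),
\]
which are exactly the definitions of $f$ and $g$ in Lemma \ref{lemma54}. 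Substituting these into $I(\bar u+v)$ and collecting terms, the part linear in $v$ is
\[
\int\nabla\bar u\cdot\nabla v-\int|x|^\alpha\bar u^{2^*_\alpha-1}v-\lambda\int\bar uv-\mu\int\bar uv\log\bar u^2=\langle I'(\bar u),v\rangle=0,
\]
since $I'(\bar u)=0$. Using $I(\bar u)=\tilde C_\kappa$ from Theorem \ref{Th61}, this yields the clean identity
\[
I(\bar u+v)=\tilde C_\kappa+\frac12\int|\nabla v|^2-\frac1{2^*_\alpha}\int|x|^\alpha v^{2^*_\alpha}+\frac{\mu-\lambda}2\int v^2-\frac1{2^*_\alpha}\int|x|^\alpha f(2^*_\alpha,\bar u,v)-\frac\mu2\int g(\bar u,v).
\]

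Next I would control each correction. The first two terms give the Br\'ezis--Nirenberg profile: with $A:=\int|\nabla U_{\varepsilon,\alpha}|^2$ and $B:=\int|x|^\alpha U_{\varepsilon,\alpha}^{2^*_\alpha}$, one has $\sup_{t\ge0}\big(\tfrac{t^2}2A-\tfrac{t^{2^*_\alpha}}{2^*_\alpha}B\big)=\frac{\alpha+2}{2(N+\alpha)}\big(A^{2^*_\alpha}/B^2\big)^{\frac{N-2}{2(\alpha+2)}}$, and by \eqref{eq32}--\eqref{eq33} this equals $\frac{\alpha+2}{2(N+\alpha)}S_\alpha^{\frac{N+\alpha}{2+\alpha}}+O(\varepsilon^{N-2})$. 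For the $f$-term I would invoke the lower bound in Lemma \ref{lemma54}, valid for $2<N<6+2\alpha$, giving
\[
-\frac1{2^*_\alpha}\int|x|^\alpha f(2^*_\alpha,\bar u,v)\le-\frac{L_1}2t^{2^*_\alpha-1}\int|x|^\alpha U_{\varepsilon,\alpha}^{2^*_\alpha-1}+\frac{B_2}{2^*_\alpha}t^2\int|x|^\alpha U_{\varepsilon,\alpha}^2,
\]
while for the $g$-term the upper bound together with $\mu<0$ gives $-\frac\mu2\int g(\bar u,v)\le\frac{|\mu|}2\big(t^{2+\beta}\int U_{\varepsilon,\alpha}^{2+\beta}+B_1t^2\int U_{\varepsilon,\alpha}^2\big)$. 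The decisive observation is that $2^*_\alpha-1\in(\frac{N+\alpha}{N-2},2^*_\alpha)$, so Lemma \ref{lemma31} gives $\int|x|^\alpha U_{\varepsilon,\alpha}^{2^*_\alpha-1}=C_1\varepsilon^{\frac{N-2}2}+o(\varepsilon^{\frac{N-2}2})$ with $C_1>0$; since $L_1=\inf_{B_{2\rho}}\bar u>0$ and, by Lemma \ref{lm741}, $t_\varepsilon$ stays in a fixed compact subset of $(0,T]$, this produces a genuinely negative term of exact order $\varepsilon^{\frac{N-2}2}$.

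Finally I would verify that every remaining positive contribution is $o(\varepsilon^{\frac{N-2}2})$, so the negative $\varepsilon^{\frac{N-2}2}$ term wins for small $\varepsilon$. The profile error $O(\varepsilon^{N-2})$ is negligible because $N-2>\frac{N-2}2$; the quantities $\int U_{\varepsilon,\alpha}^2$ and $\int|x|^\alpha U_{\varepsilon,\alpha}^2$ are $O(\varepsilon)$, $O(\varepsilon^2\log\frac1\varepsilon)$, $O(\varepsilon^2)$ for $N=3,4,5$ by \eqref{eq35}, each of strictly higher order than $\varepsilon^{\frac{N-2}2}$ exactly when $N<6$ (the same comparison flagged in the Remark after Theorem \ref{Th64}, namely $\varepsilon^{\frac{N-2}2}\gg\varepsilon^2$ iff $N<6$); and $\int U_{\varepsilon,\alpha}^{2+\beta}$ can be forced to higher order than $\varepsilon^{\frac{N-2}2}$ by taking $\beta>0$ small, using \eqref{eq53}. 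Evaluating the displayed identity at $t=t_\varepsilon$ and absorbing all these into the dominant term gives, for $\varepsilon\le\varepsilon_1$,
\[
I(\bar u+t_\varepsilon U_{\varepsilon,\alpha})\le\tilde C_\kappa+\frac{\alpha+2}{2(N+\alpha)}S_\alpha^{\frac{N+\alpha}{2+\alpha}}-C\varepsilon^{\frac{N-2}2}+o(\varepsilon^{\frac{N-2}2})<\tilde C_\kappa+\frac{\alpha+2}{2(N+\alpha)}S_\alpha^{\frac{N+\alpha}{2+\alpha}},
\]
which, combined with $c_M\le I(\bar u+t_\varepsilon U_{\varepsilon,\alpha})$ noted just before the lemma, is the claim. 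The main obstacle is the bookkeeping of orders: one must confirm that the single beneficial term, of order $\varepsilon^{(N-2)/2}$ arising from the cross interaction between $\bar u$ and the bubble through $f$, strictly dominates the competing positive term $\int U_{\varepsilon,\alpha}^2$, and this is precisely why the dimension window $N<\min\{6,6+2\alpha\}$ is imposed (the bound $N<6+2\alpha$ also being what makes the $f$-estimate of Lemma \ref{lemma54} applicable).
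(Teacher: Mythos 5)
Your argument is correct and follows essentially the same route as the paper's own proof: the identical expansion of $I(\bar u+t_\varepsilon U_{\varepsilon,\alpha})$ with the linear terms killed by $I'(\bar u)=0$, the same bounds on $f$ and $g$ from Lemma \ref{lemma54}, the dominant negative term $-C\varepsilon^{\frac{N-2}{2}}$ coming from $\int|x|^\alpha U_{\varepsilon,\alpha}^{2^*_\alpha-1}$ together with $\inf_\varepsilon t_\varepsilon>0$ (Lemma \ref{lm741}), and the same order comparisons forcing $N<\min\{6,6+2\alpha\}$. One bookkeeping slip: $\int|x|^\alpha U_{\varepsilon,\alpha}^2$ is not controlled by \eqref{eq35} (that formula is for the unweighted integral); by Lemma \ref{lemma31} it is $O(\varepsilon^{\min\{\alpha+2,N-2\}}\log\frac{1}{\varepsilon})$, so this term --- and not only the applicability of Lemma \ref{lemma54} --- is a second place where $N<6+2\alpha$ is needed, though under the lemma's hypotheses your conclusion is unaffected.
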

	\begin{proof}Since $2<N<\min\{6,6+2\alpha\}$, we can choose some $\beta>0$ such that $\min\{\frac{(2+\beta)(N-2)}{2},N-\frac{(2+\beta)(N-2)}{2}\}>\frac{N-2}{2}$ and $\min\{\alpha+2,N-2\}>\frac{N-2}{2}$.
		Following from Lemmas \ref{lemma54}, \ref{lm741} and Theorem \ref{Th61},   direct computations imply that, for $\varepsilon$ small enough,
		\begin{align*}
		&I(\bar{u}+t_{\varepsilon}U_{\varepsilon,\alpha})\\
		&=\frac{1}{2}\int|\nabla(\bar{u}+t_{\varepsilon}U_{\varepsilon,\alpha})|^2-\frac{1}{2^*_{\alpha}}\int|x|^{\alpha}|\bar{u}
		+t_{\varepsilon}U_{\varepsilon,\alpha}|^{2^*_{\alpha}}-\frac{\lambda}{2}\int|\bar{u}+t_{\varepsilon}U_{\varepsilon,\alpha}|^2\\
		&-\frac{\mu}{2}\int(\bar{u}+t_{\varepsilon}U_{\varepsilon,\alpha})^2\left(\log(\bar{u}+t_{\varepsilon}U_{\varepsilon,\alpha})^2-1\right)\\
		&=I(\bar{u})+\frac{1}{2}\int |\nabla U_{\varepsilon,\alpha}|^2t_{\varepsilon}^2-\frac{1}{2^*_{\alpha}}\int|x|^{\alpha}\left(|\bar{u}
		+t_{\varepsilon}U_{\varepsilon,\alpha}|^{2^*_{\alpha}}-\bar{u}^{2^*_{\alpha}}
		-2^*_{\alpha}\bar{u}^{2^*_{\alpha}-1}t_{\varepsilon}U_{\varepsilon,\alpha}\right)\\
		&-\frac{\lambda-\mu}{2}t_{\varepsilon}^2\int U_{\varepsilon,\alpha}^2-\frac{\mu}{2}\int\Big[(\bar{u}+t_{\varepsilon}U_{\varepsilon,\alpha})^2
		\log(\bar{u}+t_{\varepsilon}U_{\varepsilon,\alpha})^2-\bar{u}^2
		\log\bar{u}^2-2\bar{u}t_{\varepsilon}U_{\varepsilon,\alpha}\left(\log\bar{u}^2+1\right)\Big]\\
		&=I(\bar{u})+\frac{1}{2}\int |\nabla U_{\varepsilon,\alpha}|^2t_{\varepsilon}^2-\frac{1}{2^*_{\alpha}}\int|x|^{\alpha}
		|t_{\varepsilon}U_{\varepsilon,\alpha}|^{2^*_{\alpha}}
		-\frac{1}{2^*_{\alpha}}\int|x|^{\alpha}f\left(2^*_{\alpha},\bar{u},t_{\varepsilon}U_{\varepsilon,\alpha}\right)\\
		&-\frac{\lambda-\mu}{2}t_{\varepsilon}^2\int U_{\varepsilon,\alpha}^2-\frac{\mu}{2}\int g\left(\bar{u},t_{\varepsilon}U_{\varepsilon,\alpha}\right)\\
		&\leq I(\bar{u})+\frac{1}{2}\int |\nabla U_{\varepsilon,\alpha}|^2t_{\varepsilon}^2-\frac{t_{\varepsilon}^{2^*_{\alpha}}}{2^*_{\alpha}}
		\int|x|^{\alpha}|U_{\varepsilon,\alpha}|^{2^*_{\alpha}}-\frac{L_1}{2}t_{\varepsilon}^{2^*_{\alpha}-1}
		\int|x|^{\alpha}|U_{\varepsilon,\alpha}|^{2^*_{\alpha}-1}\\
		&+\frac{B_2}{2^*_{\alpha}}t_{\varepsilon}^2\int|x|^{\alpha}|U_{\varepsilon,\alpha}|^{2}-\frac{\lambda-\mu}{2}t_{\varepsilon}^2\int U_{\varepsilon,\alpha}^2-\frac{\mu}{2}t_{\varepsilon}^{2+\beta}\int|U_{\varepsilon,\alpha}|^{2+\beta}-\frac{\mu B_1}{2}t_{\varepsilon}^2\int U_{\varepsilon,\alpha}^2\\
		&\leq\tilde{C}_{\rho}+\frac{\alpha+2}{2(N+\alpha)}S_{\alpha}^{\frac{N+\alpha}{2+\alpha}}
		+O(\varepsilon^{N-2})-C\varepsilon^{\frac{N-2}{2}}\\
		&+O(\varepsilon^{\min\{\alpha+2,N-2\}}\log\frac{1}{\varepsilon})+C\int U_{\varepsilon,\alpha}^2+O(\varepsilon^{\min\{\frac{(2+\beta)(N-2)}{2},N-\frac{(2+\beta)(N-2)}{2}\}}\log\frac{1}{\varepsilon})\\
		&<\tilde{C}_{\kappa}+\frac{\alpha+2}{2(N+\alpha)}S_{\alpha}^{\frac{N+\alpha}{2+\alpha}},
		\end{align*}
		where  we have used  the fact  that
		\begin{gather*}
		\int_{\Omega}\left|U_{\varepsilon,\alpha}\right|^2=
		\begin{cases}
		O(\varepsilon),~&\hbox{if}~N=3,\\
		d\varepsilon^2\log\frac{1}{\varepsilon}+O(\varepsilon^2),~&\hbox{if}~ N=4,\\ d\varepsilon^2+O(\varepsilon^{N-2}),~&\hbox{if}~ N\ge5.
		\end{cases}
		\end{gather*}

		We complete the proof.
	\end{proof}

	\begin{proof}[\bf Proof of Theorem \ref{Th64}:]
		It follows from the Mountain pass Theorem that there exists
		a sequence $\{u_n\}\subset H_{0,r}^1(\Omega)$  such that, as  $n \rightarrow \infty$,
		
		$$I\left(u_{n}\right) \rightarrow c_{M},\quad I^{\prime}\left(u_{n}\right) \rightarrow 0,\quad \text { in }\big(H_{0,r}^{1}(\Omega)\big)^{-1}.$$
		According to Lemma \ref{lemma25} and Lemma \ref{lemma56}, we can see that
		there exists $u\in H_{0,r}^1(\Omega)$ such that
		$$u_n\rightarrow u~~\hbox{strongly in}~H_{0,r}^1(\Omega),$$
		which implies that $I(u)=c_M$ and $I'(u)=0$. That is, $u$ is a Mountain pass solution of \eqref{eq11}. Then, by a similar argument  as used in the  proof of Theorem \ref{Th41}, we can get $u>0$ and $u\in C^2(\Omega)$.
	\end{proof}

	\section{The proof of the non-existence of positive solutions}

	\begin{proof}[\bf Proof of Theorem \ref{Th16}:]
		We argue by contradiction.
		Assume that problem $\eqref{eq11}$ has a positive solution $u_0$, and let $\varphi_1(x)>0$ be the first eigenfunction corresponding to $\lambda_1(\Omega)$. Then
		\begin{align*}
		&\int_\Omega\Big(|x|^{\alpha}u_0^{2^*_{\alpha}-1}+\lambda u_0+\mu u_0\log u_0^2\Big)\varphi_1(x)\\
		&=\int_\Omega-\Delta u_0\varphi_1(x)=\int_\Omega-\Delta\varphi_1(x)u_0=\int_\Omega\lambda_1(\Omega)\varphi_1(x)u_0,
		\end{align*}
		which implies that
		\begin{equation}\label{eq115}
		\int_\Omega\Big(|x|^{\alpha}u_0^{2^*_{\alpha}-2}+\lambda-\lambda_1(\Omega)+\mu\log u_0^2\Big)u_0\varphi_1(x)=0.
		\end{equation}
		Define
		\begin{equation*}
		f(s):=s^{2^*_{\alpha}-2}+\lambda-\lambda_1(\Omega)+\mu\log s^2,~~s>0.
		\end{equation*}
		Then
		\begin{equation*}
		f'(s)=(2^*_{\alpha}-2)s^{2^*_{\alpha}-3}+2\mu\frac{1}{s}.
		\end{equation*}
		By a direct computation, $f'(s)=0$ has a unique root  $s_0=\big(-\frac{(N-2)\mu}{\alpha+2}\big)^{\frac{N-2}{2(\alpha+2)}}$. Furthermore, $f'(s)<0$ in $\big(0,\big(-\frac{(N-2)\mu}{\alpha+2}\big)^{\frac{N-2}{2(\alpha+2)}}\big)$ and $f'(s)>0$ in $\big(\big(-\frac{(N-2)\mu}{\alpha+2}\big)^{\frac{N-2}{2(\alpha+2)}},+\infty\big)$.
		
		When $\alpha\in(-2,0]$ and $|x|\leq1$, we have that, for any $s>0$,
		\begin{equation}\label{eq116}
		\begin{split}
		&|x|^{\alpha}s^{2^*_{\alpha}-2}+\lambda-\lambda_1(\Omega)+\mu\log s^2\geq f(s)
		\geq f\Big(\Big(-\frac{(N-2)\mu}{\alpha+2}\Big)^{\frac{N-2}{2(\alpha+2)}}\Big)\\
		&=-\frac{(N-2)\mu}{\alpha+2}+\frac{(N-2)\mu}{\alpha+2}\log\Big(-\frac{(N-2)\mu}{\alpha+2}\Big)+\lambda-\lambda_1{(\Omega)}\geq0.
		\end{split}
		\end{equation}
		Since  $u_0>0$ and $\varphi_1>0$, we have that
		\begin{equation}\label{eq741}
		\int_\Omega\Big(|x|^{\alpha}u_0^{2^*_{\alpha}-2}+\lambda-\lambda_1(\Omega)+\mu\log u_0^2\Big)u_0\varphi_1(x)\geq\int_\Omega f(u_0(x))u_0\varphi_1(x)>0.
		\end{equation}
		Otherwise, $|x|^{\alpha}u_0^{2^*_{\alpha}-2}+\lambda-\lambda_1(\Omega)+\mu\log u_0^2=0$ a.e.~in $\Omega$,~i.e., $f(u_0(x))=0$ a.e.~in $\Omega$. So $u_0(x)=\big(-\frac{(N-2)\mu}{\alpha+2}\big)^{\frac{N-2}{2(\alpha+2)}}$ a.e.~in $\Omega$, which contradicts to $u_0\in H_{0,r}^1(\Omega)$. By \eqref{eq115} and  \eqref{eq741}, we have that
		\begin{equation*}
		0=\int_\Omega\left(|x|^{\alpha}u_0^{2^*_{\alpha}-2}+\lambda-\lambda_1(\Omega)+\mu\log u_0^2\right)u_0\varphi_1(x)>0,
		\end{equation*}
		which is a contradiction. Hence  problem \eqref{eq11} has no positive solutions.
		
		This completes the proof.
	\end{proof}

	\section*{Acknowledgement}
	The authors   would like to thank the anonymous referees for carefully reading this paper and  making valuable comments and suggestions.
	This work  was  supported by the fund from NSF of China (No.  12061012,12461022).
	
	
	\textbf{Data availability} ~~ The manuscript has no associated data, therefore can be considered that all data needed are
	available freely.

	\textbf{Conflict of interest}~~ The authors declare that they have no Conflict of interest.

	\textbf{Ethics approval}~~ The research does not involve humans and/or animals. The authors declare that there are no
	ethics issues to be approved or disclosed.

	%
	%
	%

	\bibliographystyle{plainnat}
	
\end{document}